\documentclass[a4paper]{article}

\usepackage[pages=all, color=black, position={current page.south}, placement=bottom, scale=1, opacity=1, vshift=5mm]{background}

\usepackage[margin=1in]{geometry} 

\usepackage{amsmath}
\usepackage{amsthm}
\usepackage{amssymb}

\usepackage[utf8]{inputenc}
\usepackage{hyperref}

\usepackage{verbatim,xcolor}
\usepackage{todonotes}
\usepackage{bm}
\usepackage{url}
\usepackage{float, subcaption}
\usepackage{multirow}
\usepackage{algorithmicx}

\usepackage[sort&compress,numbers,square]{natbib}
\bibliographystyle{mplainnat}

\theoremstyle{plain}
\newtheorem{theorem}{Theorem}[section]

\newtheorem{lemma}[theorem]{Lemma}

\theoremstyle{definition}

\newtheorem{remark}[theorem]{Remark}

\usepackage{graphicx, color}
\graphicspath{{fig/}}
\usepackage{algorithm, algpseudocode}
\usepackage{mathrsfs}

\usepackage{lipsum}
\numberwithin{equation}{section}


\newcommand{\ba}{{\bf a}}
\newcommand{\bb}{{\bf b}}

\newcommand{\bd}{{\bf d}}

\newcommand{\bx}{{\bf x}}
\newcommand{\bn}{{\bf n}}
\newcommand{\bu}{{\bf u}}
\newcommand{\bv}{{\bf v}}
\newcommand{\bvc}{\bv^C}
\newcommand{\bvd}{\bv^D}
\newcommand{\bw}{{\bf w}}
\newcommand{\bwc}{\bw^C}
\newcommand{\bwd}{\bw^D}

\newcommand{\bV}{{\bf V}}

\newcommand{\bC}{{\bf C}}
\newcommand{\bD}{{\bf D}}
\newcommand{\bQ}{{\bf Q}}
\newcommand{\bbf}{{\bf f}}

\newcommand{\bzero}{\bf{0}}
\newcommand{\bphi}{\boldsymbol \phi}

\newcommand{\bPhi}{\boldsymbol \Phi}
\newcommand{\baE}{{\ba_{\mathcal{E}}}}
\newcommand{\bdE}{{\bd_{\mathcal{E}}}}

\newcommand{\Pih}{\Pi_h}
\newcommand{\Pz}{\mathcal{P}_0}

\newcommand{\cR}{\mathcal{R}}

\newcommand{\PiC}{\Pi^{C}}

\def\ljump{{[\![}}
\def\rjump{{]\!]}}

\newcommand{\jump}[1]{[ #1 ]}
\newcommand{\avg}[1]{\{ #1\}}
\newcommand{\Hone}{H^1(\Omega)}

\newcommand{\Honed}{[H^1(\Omega)]^d}
\newcommand{\Honezd}{[H^1_0(\Omega)]^d}
 \newcommand{\Ltwo}{L^2(\Omega)}
 \newcommand{\Ltwoz}{L^2_0(\Omega)}
 \newcommand{\Vh}{\bV_h}
 \newcommand{\Qh}{\bQ_h}
  \newcommand{\Ch}{\bC_h}
  \newcommand{\Dh}{\bD_h}
\newcommand{\Th}{\mathcal{T}_h}
\newcommand{\K}{K}
\newcommand{\Eh}{\mathcal{E}_h}
\newcommand{\Eho}{\mathcal{E}_h^o}
\newcommand{\Ehb}{\mathcal{E}_h^b}
\newcommand{\bne}{\bn_e}
\newcommand{\bnK}{\bn_\K}
\newcommand{\cD}{\mathcal{D}}
\newcommand{\norm}[1]{\lVert #1\rVert}
\newcommand{\enorm}[1]{\lVert #1\rVert_{\mathcal{E}}}
\newcommand{\snorm}[1]{|#1|}


\newcommand{\buh}{\bu_h}
\newcommand{\ph}{p_h}
\newcommand{\chiu}{\chi_\bu}
\newcommand{\xiu}{\xi_\bu}
\newcommand{\chip}{\chi_p}
\newcommand{\xip}{\xi_p}
\newcommand{\bbuh}{\overline{\bu}_h}

\newcommand{\half}{\frac{1}{2}}

\title{Pressure-robust enriched Galerkin methods for the Stokes equations\footnote{Submitted to Journal of Computational and Applied Mathematics in 2023.}}
\author{Xiaozhe Hu,\thanks{Department of Mathematics, Tufts University, Medford, MA 02155
(\texttt{xiaozhe.hu@tufts.edu})}
\and Seulip Lee,\thanks{Department of Mathematics, University of Georgia, Athens, GA 30602 (\texttt{seulip.lee@uga.edu})}
\and Lin Mu,\thanks{Department of Mathematics, University of Georgia, Athens, GA 30602 (\texttt{linmu@uga.edu})}
\and Son-Young Yi\thanks{Department of Mathematical Sciences, University of Texas at El Paso, El Paso, TX 79968 (\texttt{syi@utep.edu})}}

\date{
}

\begin{document}
	\maketitle
	
	\begin{abstract}
		In this paper, we present a pressure-robust enriched Galerkin (EG) scheme for solving the Stokes equations, which is an enhanced version of the EG scheme for the Stokes problem proposed in [S.-Y. Yi, X. Hu, S. Lee, J. H. Adler, An enriched Galerkin method for the Stokes equations, \textit{Computers and Mathematics with Applications} 120 (2022) 115–131]. The pressure-robustness is achieved by employing a velocity reconstruction operator on the load vector on the right-hand side of the discrete system. An a priori error analysis proves that the velocity error is  independent of the pressure and viscosity. We also propose and analyze a perturbed version of our pressure-robust EG method that allows for the elimination of the degrees of freedom corresponding to the discontinuous component of the velocity vector via static condensation. The resulting method can be viewed as a stabilized $H^1$-conforming $\mathbb{P}_1$-$\mathbb{P}_0$ method.
Further, we consider efficient block preconditioners whose performances are independent of the viscosity.
The theoretical results are confirmed through various numerical experiments in two and three dimensions.
		\vskip 10pt
		\noindent\textbf{Keywords:} enriched Galerkin; finite element methods; Stokes equations; pressure-robust; static condensation; stabilization.
	\end{abstract}

	

\section{Introduction}\label{sec: intro}
We consider the Stokes equations for modeling incompressible viscous flow in an open and bounded domain $\Omega\subset \mathbb{R}^{d}$, $d=2,3$, 
with simply connected Lipschitz boundary $\partial \Omega$: Find the fluid velocity $\bu:\Omega\rightarrow\mathbb{R}^d$ and the pressure $p:\Omega\rightarrow\mathbb{R}$ such that
\begin{subequations}\label{sys: governing}
\begin{alignat}{2}
-\nu\Delta \bu + \nabla p & = \bbf && \quad \text{in } \Omega, \label{eqn: governing1} \\
\nabla\cdot \bu &= 0 && \quad \text{in } \Omega,  \label{eqn: governing2}\\
\bu & = 0 && \quad \text{on }\partial \Omega, \label{eqn: governing3}
\end{alignat}
\end{subequations}
where $\nu>0$ is the fluid viscosity, and $\bbf$ is a given body force.

Various  finite element methods (FEMs) have been applied to solve the Stokes problem based on the velocity-pressure formulation \eqref{sys: governing}, including conforming and non-conforming mixed FEMs \cite{TaylorHood73,BernardiRaugel85, CrouzeixRaviart73, ScottVogelius83}, discontinuous Galerkin (DG) methods 
\cite{HansboLarson08,GiraultEtAl05}, weak Galerkin methods \cite{MuWYZ18,Mu20}, and enriched Galerkin methods \cite{ChaabaneEtAl18,YiEtAl22-Stokes}.
It is well-known that the finite-dimensional solution spaces must satisfy the inf-sup stability condition \cite{Ladyzhenskaya69,Babuska73,Brezzi74} for the well-posedness of the discrete problem regardless of what numerical method is used. Therefore, there has been extensive research to construct inf-sup stable pairs for the Stokes equations in the last several decades. 
Some classical $H^1$-conforming stable pairs include Taylor-Hood, Bernardi-Raugel, and MINI elements \cite{GiraultRaviart86}.

Though the inf-sup condition is crucial for the well-posedness of the discrete problem, it does not always guarantee an accurate solution. Indeed, many inf-sup stable pairs are unable to produce an accurate velocity solution when the viscosity is very small.  More precisely, such pairs produce the velocity solution whose  error bound depends on the pressure error and is inversely proportional to the viscosity, $\nu$.
Hence, it is important to develop numerical schemes whose velocity error bounds are independent of pressure and viscosity, which we call {\it pressure-robust} schemes.

There have been three major directions to develop pressure-robust schemes.
The first direction is based on employing a divergence-free velocity space since, then, one can separate the velocity error from the pressure error. 
Recall, however, that the velocity space of classical low-order inf-sup stable elements do not satisfy the divergence-free (incompressibility) condition strongly. 
One way to develop a divergence-free velocity space is to take the curl of an $H^2$-conforming finite element space \cite{ArnoldQin92,FalkNeilan13,GuzmanNeilan14(1)}, from which the pressure space is constructed by taking the divergence operator. Though this approach provides the desired pressure-robustness, it requires many degrees of freedom (DoFs).
Another way, which has received much attention lately, is to employ an $H(\text{div})$-conforming velocity space \cite{CockburnKS07,WangYe07,ChenWZ14,GuzmanNeilan14(1),GuzmanNeilan14(2),ChenWang2018a}.
However, to take account of lack of regularity, the tangential continuity of the velocity vector  across the inter-elements has to be imposed   either strongly \cite{GuzmanNeilan14(1),GuzmanNeilan14(2)} or weakly \cite{CockburnKS07,WangYe07,ChenWZ14,ChenWang2018a}.
The second direction is based on the grad-div stabilization \cite{Olshanskii04,JenkinsJLR14}, which is derived by adding a modified incompressibility condition to the continuous momentum equation. 
This stabilization technique reduces the pressure effect in the velocity error estimate but not completely eliminates it.
The third direction, which we consider in the present work, is based on employing a velocity reconstruction operator \cite{Linke12}. 
In this approach, one reconstructs ($H(\text{div})$-nonconforming) discrete velocity test functions by mapping them into an $H(\text{div})$-conforming space. These reconstructed velocity test functions are used in the load vector (corresponding to the body force $\bbf$) on the right-hand side of the discrete system while the original test functions are used in the stiffness matrix. 
This idea has been successfully explored in various numerical methods \cite{Linke14,LinkeMerdon16,GaugerLS19,Mu20,MuYZ21(1),MuYZ21(2),WangMWH21,LiZikatanov22}.

Our main goal is to develop and analyze a pressure-robust finite element method requiring minimal number of degrees of freedom.
To achieve this goal, we consider as the base method the enriched Galerkin (EG) method proposed for the Stokes equations with mixed boundary conditions \cite{YiEtAl22-Stokes}. This EG method employs the piecewise constant space for the pressure. As for the velocity space, it employs the  linear Lagrange space,
enriched by some discontinuous, piecewise linear, and mean-zero vector functions. This enrichment space requires only one DoF per element. Indeed, any function $\bv$ in the velocity space has a unique decomposition of the form $\bv = \bv^C + \bv^D$, where $\bv^C$ belongs to the linear Lagrange space and $\bv^D$ belongs to the discontinuous linear enrichment space. To take account of the non-conformity of the velocity space, an interior penalty discontinuous Galerkin (IPDG) bilinear form is adopted. 
This method is one of the cheapest inf-sup stable methods with optimal convergence rates for the Stokes problem. However, like many other inf-sup stable methods, this EG method produces pressure- and viscosity-dependent velocity error.  In order to derive a pressure-robust EG method, we propose to take the velocity reconstruction approach mentioned above. Specifically, the velocity test functions are mapped to the first-order Brezzi-Douglas-Marini space, whose resulting action is equivalent to preserving the continuous component $\bv^C$
and mapping only the discontinuous component $\bv^D$ to the lowest-order Raviart-Thomas space. 
This operator is applied to the velocity test functions on the right-hand side of the momentum equation, therefore the resulting stiffness matrix is the same as the one generated by the original EG method in \cite{YiEtAl22-Stokes}. By this simple modification in the method, we can achieve pressure-robustness without compromising the optimal convergence rates, which has been proved mathematically and demonstrated numerically.

Though the proposed pressure-robust EG method is already a very cheap and efficient method, we seek to reduce the computational costs  even more by exploring a couple of strategies. 
First, we developed a perturbed pressure-robust EG method, where a sub-block in the stiffness matrix corresponding to the discontinuous component, $\buh^D$, of the velocity solution $\buh = \buh^C + \buh^D$  is replaced by a diagonal matrix.  This modification allows for the elimination of the DoFs corresponding to $\buh^D$ from the discrete system via static condensation. The method corresponding to the condensed linear system can be viewed as a new stabilized $H^1$-conforming $\mathbb{P}_1$-$\mathbb{P}_0$ method (see \cite{rodrigoNewStabilizedDiscretizations2018} for a similar approach).
For an alternative strategy, we designed fast linear solvers for the pressure-robust EG method and its two variants. Since the stiffness matrix remains unchanged, the fast linear solvers designed for the original EG method in \cite{YiEtAl22-Stokes} can be applied to the new pressure-robust EG method and its two variants with only slight modifications. The performance of these fast solvers was investigated through some numerical examples.

The remainder of this paper is organized as follows: Section~\ref{sect:preliminaries} introduces some preliminaries and notations, which are useful in the later sections.
In Section~\ref{sec: EG}, we recall  the standard EG method \cite{YiEtAl22-Stokes} and introduce our pressure-robust EG method.
Then, well-posedness and error estimates are proved in Section~\ref{sec: error}, which reveal the pressure-robustness of our new EG scheme.
In Section~\ref{sec:perturb-EG}, a perturbed pressure-robust EG scheme is presented, and its numerical analysis is discussed. 
In Section~\ref{sec:nume_examples}, we verify our theoretical results and demonstrate the efficiency of the proposed EG methods through two-and three-dimensional numerical examples.
We conclude our paper by summarizing the contributions of our new EG methods and discuss future research in Section~\ref{sec:conclusions}.

\section{Notation and Preliminaries}\label{sect:preliminaries}
We first present some notations and preliminaries that will be useful for the rest of this paper. 
For any open domain $\cD \in \mathbb{R}^d$, where $d=2, 3$, we use the standard notation $H^s(\cD)$ for Sobolev spaces, where $s$ is a positive real number. The Sobolev norm and semi-norm associated with $H^s(\cD)$ are denoted by $\norm{\cdot}_{s, \cD}$  and $\snorm{\cdot}_{s, \cD}$, respectively.  In particular, when $s = 0$, $H^0(\cD)$ coincides with $L^2(\cD)$ and its associated norm  will be denoted by $\norm{\cdot}_{0, \cD}$.
Also, $(\cdot, \cdot)_\cD$ denotes the $L^2$-inner product on $\cD$. If $\cD = \Omega$, we drop $\cD$ in the subscript. We extend these definitions and notation{s} to vector{-} and tensor-valued Sobolev spaces in a straightforward manner.  
Additionally, $H^1_0(\Omega)$ and $\Ltwoz$ are defined by
\[
H^1_0(\Omega) = \{ z \in \Hone \mid z = 0 \text{  on  }  \partial \Omega \}, \quad
\Ltwoz = \{ z \in \Ltwo \mid (z, 1)= 0  \}.
\]
We finally introduce a Hilbert space
\begin{equation*}
    H(\text{div},\Omega):=\{\bv\in [L^2(\Omega)]^d:\text{div}\;\bv\in L^2(\Omega)\}
\end{equation*}
with an associated norm
\begin{equation*}
\norm{\bv}_{H(\text{div},\Omega)}^2:=\norm{\bv}_{0}^2+\norm{\text{div}\;\bv}_{0}^2.
\end{equation*}

To define our EG methods, we consider a shape-regular mesh $\Th$ on $\Omega$, consisting of triangles in two dimensions and tetrahedra in three dimensions. Let $h_\K$ be the diameter of $\K \in \Th$. Then, 
the characteristic mesh size $h$ is defined by
$h = {\max}_{\K \in \Th}{h_\K}$.
Denote  by $\Eho$ the collection of all the interior edges/faces in $\Th$ and by $\Ehb$ the boundary edges/faces . Then, $\Eh = \Eho \cup \Ehb$ is the collection of all edges/faces in the mesh $\Th$. 
On each $\K \in \Th$, let $\bnK$ denote the unit outward normal vector on $\partial \K$. Each interior edge/face $e \in \Eho$ is shared by two neighboring elements, that is, $ e = \partial \K^+ \cap \partial \K^-$ for some $\K^+, \K^- \in \Th$. We associate one unit normal vector $\bne$ with $e \in \Eho$, which is assumed to be oriented from $\K^+$ to $\K^-$. If $e$ is a boundary edge/face, then $\bne$ is the unit outward normal vector to $\partial \Omega$.

We now define broken Sobolev spaces on $\Th$ and $\Eh$. 
The broken Sobolev space $H^s(\Th)$  on the mesh $\Th$ is defined by 
\[
H^s(\Th) = \{ v \in \Ltwo \mid v|_{\K} \in H^s(\K) \ \forall \K \in \Th \},
\]
which is equipped with a broken Sobolev norm
\[
\norm{v}_{s, \Th} = \left( \sum_{\K \in \Th} \norm{v}_{s, \K}^2 \right)^\half.
\]
Similarly, we define the space $L^2(\Eh)$ on $\Eh$  and its associated norm
\[
\norm{v}_{0, \Eh} =  \left( \sum_{e \in \Eh} \norm{v}_{0, e}^2 \right)^\half.
\]
Also, the $L^2$-inner product on $\Eh$ is denoted by $\langle \cdot, \cdot \rangle_{\Eh}$.

Finally, we define the average and jump operators, which will be needed to define the EG methods.  For any $v \in H^s(\Th)$ with $s > 1/2$,  let $v^\pm$ be the trace of $v|_{\K^\pm}$  on $e = \partial \K^+ \cap \partial \K^-$. Then, the average and jump of $v$ along $e$, denoted by $\avg{\cdot}$ and $\jump{\cdot}$, are defined by
\[
\avg{v} = \half( v^+ + v^-), \quad \jump{v} = v^+ - v^- \quad \text{on } e \in \Eho.
\]
On a boundary edge/face,  
\[
\avg{v} =v, \quad \jump{v} = v \quad \text{on } e \in \Ehb.
\]
These definitions can be naturally extended to vector- or tensor-valued functions.

\section{Pressure-Robust Enriched Galerkin Method}\label{sec: EG}
In this section, we introduce our new pressure-robust EG method for the Stokes equations. 
First, we can derive the following weak formulation of the governing equations \eqref{sys: governing} in a standard way:
Find $(\bu, p) \in \Honezd \times \Ltwoz$ such that
\begin{subequations}\label{sys: weak}
\begin{alignat}{2}
\nu   (\nabla \bu, \nabla \bv) - (\nabla \cdot \bv, p) & = (\bbf, \bv) && \quad \forall \bv \in  \Honezd, \label{eqn: weak1} 
\\
(\nabla \cdot \bu, q) & = 0 && \quad \forall q \in  \Ltwoz. \label{eqn: weak2}
\end{alignat}
\end{subequations}
In this paper, we consider the homogeneous Dirichlet boundary condition for simplicity. However, in the case of a non-homogeneous Dirichlet boundary condition, this  weak problem can be easily modified by changing the solution space for the velocity $\bu$ to reflect the non-homogeneous boundary condition.

\subsection{Standard EG method for the Stokes problem \cite{YiEtAl22-Stokes}}
Our pressure-robust EG method is designed based upon the EG method originally proposed in \cite{YiEtAl22-Stokes}, which we will refer to as the {\it standard EG method} for the Stokes equations.
Therefore, in this section, we first present the standard EG method to lay a foundation for the presentation of the new pressure-robust method.

For any integer $k \ge 0$, $\mathbb{P}_k(\cD)$ denotes the set of polynomials  defined on $\cD \subset \mathbb{R}^d$ whose total degree is less than or equal to $k$. Let
\[
\Ch = \{\bv \in \Honezd \mid \bv|_{\K} \in [\mathbb{P}_1(\K)]^d \ \forall \K \in \Th \}
\]
and 
\[
\Dh = \{\bv \in L^2(\Omega) \mid \bv|_{\K} = c (\bx - \bx_\K),  c \in \mathbb{R} \ \forall \K \in \Th\},
\]
where $\bx_\K$ is the centroid of the element $\K \in \Th$. 
Then, our EG finite element space for the velocity is defined by
\[
\Vh = \Ch \oplus \Dh.
\]
Therefore, any $\bv \in \Vh$ has a unique decomposition $\bv = \bv^C + \bv^D$ such that $\bv^C \in \Ch$ and $\bv^D \in \Dh$.
On the other hand, we employ the mean-zero, piecewise constant space for the pressure. That is, the pressure space is defined by
\[
\Qh = \{ q \in \Ltwoz \mid q \in \mathbb{P}_0(\K) \ \forall \K \in \Th \}.
\]

The standard EG scheme introduced in \cite{YiEtAl22-Stokes} reads as follows:
\begin{algorithm}[H]
\caption{Standard EG (\texttt{ST-EG}) method } \label{alg:EG}
Find $( \bu_h, \ph) \in \Vh \times \Qh $ such that
\begin{subequations}\label{sys: stand-eg}
\begin{alignat}{2}
\ba(\buh,\bv)  - \bb(\bv, \ph) &= (\bbf,  \bv) &&\quad \forall \bv \in\Vh, \label{eqn: stand-eg1}\\
\bb(\buh,q) &= 0 &&\quad \forall q\in \Qh,  \label{eqn: stand-eg2}
\end{alignat}
\end{subequations}
where 
\begin{subequations}\label{sys: bilinear}
\begin{align}
\ba(\bv,\bw) &:= \nu \big((\nabla \bv,\nabla \bw)_{\Th} -  \langle \avg{\nabla\bv} \bn_e, \jump{\bw} \rangle_{\Eh} \nonumber\\
&\qquad\qquad\qquad\qquad- \langle \avg {\nabla\bw} \bn_e,\jump{\bv}\rangle_{\Eh}  +  \rho \langle h_e^{-1}\jump{\bw},
\jump{\bv}\rangle_{\Eh} \big), \label{eqn: bia} \\
\bb(\bw,q)&:= (\nabla\cdot\bw, q)_{\Th} - \langle \jump{\bw}\cdot\bn_e,\avg{q} \rangle_{\Eh}.  \label{eqn: bib}
\end{align}
\end{subequations}
Here, $\rho >0$ is a penalty parameter and $h_e = |e|^{1/(d-1)}$, where $|e|$ is the length/area of the edge/face $e \in \Eh$. 
\end{algorithm}

\subsection{Pressure-robust EG method}
In order to define a pressure-robust EG scheme, we use a velocity reconstruction operator $\cR: \Vh \to  H(\text{div},\Omega)$
that maps any function $\bv \in \Vh$ into the  Brezzi-Douglas-Marini space of index $1$, denoted by BDM$_1$ \cite{BrezziFortin91}. Specifically, for any $\bv  \in \Vh$, $\cR \bv \in\text{BDM}_1$ is defined by
\begin{subequations}\label{sys: BDM}
\begin{alignat}{2}
\int_e (\cR \bv) \cdot\bn_e  p_1\  ds & = \int_e \avg{\bv}\cdot\bn_e p_1 \ ds 
 && \quad \forall p_1 \in \mathbb{P}_1(e), \ \forall e \in \Eho,  \\
\int_e (\cR \bv) \cdot\bn_e  p_1\  ds & = 0  && \quad \forall p_1 \in \mathbb{P}_1(e), \ \forall e \in \partial \Omega.
\end{alignat}
\end{subequations}

\begin{remark}
The above definition of $\cR$ is useful in numerical analysis of the new method. In practice, however, we reconstruct only the discontinuous component of each vector based on the following observation:
For any $\bv = \bv^C + \bv^D \in \Vh$, the operator $\cR$ defined in \eqref{sys: BDM} keeps the continuous component $\bv^C$ the same while mapping $\bv^D$ into the lowest-order Raviart-Thomas space \cite{raviart2006mixed}, denoted by $\text{RT}_0$. Indeed,  $\cR \bv^D \in \text{RT}_0$ satisfies
\begin{equation}\label{eqn: RT}
\int_e \cR \bv^D \cdot\bn_e \ ds = \int_e \avg {\bv^D} \cdot\bn_e \ ds \quad  \forall e \in \Eh.
\end{equation}
\end{remark}
Using the definition of the operator $\cR$, we can easily prove the following lemma.
\begin{lemma} The bilinear form $\bb(\bw,q) $ can be written using the velocity reconstruction operator $\cR$ as follows:
\begin{equation}\label{eqn: bib-equi}
\bb(\bw,q) =  (\nabla \cdot \cR \bw, q)_{\Th}.
\end{equation}
\end{lemma}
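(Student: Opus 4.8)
The plan is to reduce both sides of \eqref{eqn: bib-equi} to one and the same integral over the interior edges and then match them. The two ingredients are element-wise integration by parts---which is especially clean here because $q\in\Qh$ is piecewise constant, so that $\nabla q = 0$ on each $\K$ and no volume term survives---together with the elementary ``DG product rule'' identity
\[
a^+b^+ - a^-b^- = \jump{a}\,\avg{b} + \avg{a}\,\jump{b},
\]
valid on each interior edge $e\in\Eho$ for any traces $a^\pm, b^\pm$. Summing over elements, each $e\in\Eho$ is counted twice with opposite outward normals, while each boundary edge is counted once with $\bnK = \bne$.

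For the right-hand side, I would integrate $(\nabla\cdot\cR\bw, q)_{\Th}$ by parts on each $\K$ to obtain $\sum_{\K}\int_{\partial\K}(\cR\bw)\cdot\bnK\, q\, ds$. Applying the identity above with $a = (\cR\bw)\cdot\bne$ and $b = q$, the interior-edge contribution becomes $\langle\jump{(\cR\bw)\cdot\bne},\avg{q}\rangle_{\Eho} + \langle\avg{(\cR\bw)\cdot\bne},\jump{q}\rangle_{\Eho}$, while the boundary edges contribute $\langle(\cR\bw)\cdot\bne, q\rangle_{\Ehb}$. Since $\cR\bw\in H(\text{div},\Omega)$, its normal trace is single-valued across each interior edge, so $\jump{(\cR\bw)\cdot\bne}=0$; and because $(\cR\bw)\cdot\bne\in\mathbb{P}_1(e)$ is orthogonal to all of $\mathbb{P}_1(e)$ on each $e\in\Ehb$ by the boundary part of \eqref{sys: BDM}, it is identically zero there. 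This leaves $\langle\avg{(\cR\bw)\cdot\bne},\jump{q}\rangle_{\Eho}$. Finally, because $\jump{q}$ is constant on each edge and the normal trace is single-valued, I would invoke the defining moment condition of $\cR$ with the test function $p_1 = 1$ to replace $\int_e(\cR\bw)\cdot\bne\, ds$ by $\int_e\avg{\bw}\cdot\bne\, ds = \int_e\avg{\bw\cdot\bne}\, ds$, giving $(\nabla\cdot\cR\bw, q)_{\Th} = \langle\avg{\bw\cdot\bne},\jump{q}\rangle_{\Eho}$.

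It then remains to show that $\bb(\bw,q)$ collapses to the same quantity. Integrating the first term $(\nabla\cdot\bw, q)_{\Th}$ of \eqref{eqn: bib} by parts in the same way and applying the product-rule identity produces $\langle\jump{\bw\cdot\bne},\avg{q}\rangle_{\Eho} + \langle\avg{\bw\cdot\bne},\jump{q}\rangle_{\Eho} + \langle\bw\cdot\bne, q\rangle_{\Ehb}$; subtracting the term $\langle\jump{\bw}\cdot\bne,\avg{q}\rangle_{\Eh}$, whose interior piece equals the first group (since $\jump{\bw}\cdot\bne=\jump{\bw\cdot\bne}$) and whose boundary piece equals the third group (since $\avg{q}=q$ and $\jump{\bw}=\bw$ there), again leaves $\langle\avg{\bw\cdot\bne},\jump{q}\rangle_{\Eho}$, so the two sides agree. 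I expect the only delicate points to be the bookkeeping of normal orientations in the element-boundary-to-edge conversion and the clean vanishing of the boundary normal trace of $\cR\bw$; the $H(\text{div})$-conformity of the range of $\cR$ and its lowest-order moment-matching property in \eqref{sys: BDM} are exactly what make everything cancel.
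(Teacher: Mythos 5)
Your proof is correct: the paper omits the argument entirely, stating only that the lemma follows easily from the definition of $\cR$, and your route---elementwise integration by parts (with the volume term vanishing since $q \in \Qh$ is piecewise constant), the DG product identity to convert element-boundary sums to edge sums, the single-valuedness of $(\cR\bw)\cdot\bne$ from $H(\text{div},\Omega)$-conformity, the vanishing boundary normal trace from the boundary moment condition in \eqref{sys: BDM}, and the lowest-order interior moment condition with $p_1 = 1$---is precisely the intended computation, reducing both sides to $\langle \avg{\bw\cdot\bne}, \jump{q}\rangle_{\Eho}$. All the delicate points you flag (orientation bookkeeping and the fact that $(\cR\bw)\cdot\bne \in \mathbb{P}_1(e)$ on boundary faces, so orthogonality to $\mathbb{P}_1(e)$ forces it to vanish identically) are handled correctly.
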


We are now ready to define our pressure-robust EG method using the velocity reconstruction operator $\cR$ as below.
\begin{algorithm}[H]
\caption{Pressure-robust EG (\texttt{PR-EG}) method}
\label{alg:EG-PR}
Find $( \bu_h, \ph) \in \Vh \times \Qh $ such that
\begin{subequations}\label{sys: eg}
\begin{alignat}{2}
\ba(\buh,\bv)  - \bb(\bv, \ph) &= (\bbf, \cR \bv), &&\quad \forall \bv \in\Vh, \label{eqn: eg1}\\
\bb(\buh,q) &= 0, &&\quad \forall q\in \Qh,  \label{eqn: eg2}
\end{alignat}
\end{subequations}
where the bilinear forms $\ba(\cdot, \cdot)$ and $\bb(\cdot, \cdot)$ are the same as in \eqref{sys: bilinear}.
\end{algorithm}

As the bilinear forms $\ba(\cdot, \cdot)$ and $\bb(\cdot, \cdot)$ are the same as in \eqref{sys: bilinear}, the \texttt{PR-EG} method in Algorithm~\ref{alg:EG-PR} and the \texttt{ST-EG} method in Algorithm~\ref{alg:EG}
have the same stiffness matrix. However, this seemingly simple modification on the right-hand side vector significantly enhances the performance of the \texttt{ST-EG} method, as will be shown in the error estimates and numerical experiments.

\section{Well-Posedness and Error Estimates}\label{sec: error}
We first establish the inf-sup condition \cite{BrezziFortin91} and prove a priori error estimates for the \texttt{PR-EG} method, Algorithm~\ref{alg:EG-PR}. To this end, we employ the following mesh-dependent norm in $\Vh$:
\[
\enorm{\bv} = \left(\norm{\nabla \bv}_{0, \Th}^2 +  \rho \norm{h_e^{-\half}  \jump{\bv}}_{0, \Eh}^2\right)^\half.
\]
Also, the analysis relies on the interpolation operator $\Pih: \Honed \to \Vh$, defined in \cite{YiLeeZikatanov21, YiEtAl22-Stokes}, and the local $L^2$-projection $\Pz: \Hone \to \Qh$. 
Here, we only state their useful properties and error estimates without proof.
\begin{lemma} 
There exists an interpolation operator $\Pih: \Honed \to \Vh$  such that
\begin{subequations}\label{sys: Pih}
\begin{alignat}{2}
& (\nabla\cdot(\bv - \Pih \bv), 1)_{\K} = 0 &&  \quad \forall \K \in \Th,\ \forall \bv \in \Honed, \label{eqn: Pih_prop} \\
& |\bv - \Pih \bv | _{j} \leq C h^{m-j} |\bv|_{m} && \quad 0 \leq j \leq m \leq 2, \ \forall \bv \in [H^2(\Omega)]^d, \label{eqn: Pih_err} \\
& \enorm{\bv - \Pih \bv} \leq C h \norm{\bv}_2 &&\quad 0 \leq j \leq m \leq 2, \ \forall \bv \in [H^2(\Omega)]^d,  \label{eqn: Pih_energy_err}\\
& \enorm{\Pih \bv} \leq C \snorm{\bv}_1 && \quad \forall \bv \in \Honezd.  \label{eqn: Pih_energy}
\end{alignat}
\end{subequations}
\end{lemma}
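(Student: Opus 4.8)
The plan is to construct $\Pih$ explicitly as the sum of an $H^1$-conforming quasi-interpolant into $\Ch$ and a purely local enrichment correction in $\Dh$ whose sole job is to enforce the elementwise divergence property \eqref{eqn: Pih_prop}. Write $\Pih \bv = \Pih^C \bv + \Pih^D \bv$, where $\Pih^C : \Honed \to \Ch$ is a Scott--Zhang type operator, chosen to preserve the homogeneous boundary condition and to satisfy the standard local estimates $|\bv - \Pih^C\bv|_{j,\K} \le C h_\K^{m-j}|\bv|_{m,\omega_\K}$ on the element patch $\omega_\K$, together with the $H^1$-stability $|\Pih^C\bv|_{1,\K} \le C|\bv|_{1,\omega_\K}$. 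The enrichment part is defined element by element by $\Pih^D\bv|_\K = c_\K(\bx - \bx_\K)$ with a single scalar $c_\K$ fixed to correct the local divergence, exactly as in the ansatz of $\Dh$.

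First I would verify \eqref{eqn: Pih_prop}. Since $\nabla\cdot(c_\K(\bx-\bx_\K)) = d\,c_\K$ is constant on $\K$, the choice
\[
c_\K = \frac{1}{d|\K|}\int_\K \nabla\cdot(\bv - \Pih^C\bv)\,dx
\]
yields $(\nabla\cdot(\bv - \Pih\bv),1)_\K = \int_\K \nabla\cdot\bv\,dx - \int_\K\nabla\cdot\Pih^C\bv\,dx - d\,c_\K|\K| = 0$ on every $\K \in \Th$, which is \eqref{eqn: Pih_prop}. This step is immediate once the decomposition is in place; its only purpose is to pin down $c_\K$.

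Next I would show that the enrichment correction is a genuinely higher-order perturbation, which then yields \eqref{eqn: Pih_err}, \eqref{eqn: Pih_energy_err}, and \eqref{eqn: Pih_energy}. Using $\|\nabla\cdot\bw\|_{0,\K} \le \sqrt{d}\,|\bw|_{1,\K}$ and Cauchy--Schwarz in the formula for $c_\K$ gives $|c_\K| \le C|\K|^{-1/2}\,|\bv - \Pih^C\bv|_{1,\K}$. Since $|\bx - \bx_\K|_{1,\K} = \sqrt{d}\,|\K|^{1/2}$ and $\|\bx-\bx_\K\|_{0,\K} \le C h_\K |\K|^{1/2}$, this bounds the enrichment by
\[
|\Pih^D\bv|_{1,\K} \le C|\bv - \Pih^C\bv|_{1,\K}, \qquad \|\Pih^D\bv\|_{0,\K} \le C h_\K |\bv - \Pih^C\bv|_{1,\K}.
\]
Combining with the Scott--Zhang estimates and summing over $\K$ (using finite patch overlap) gives \eqref{eqn: Pih_err} for $0\le j\le m\le 2$ and the $H^1$ bound behind \eqref{eqn: Pih_energy}. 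For the jump term in $\enorm{\cdot}$, note $\jump{\Pih^C\bv}=0$ because $\Pih^C\bv\in\Ch\subset\Honezd$, while $\jump{\bv}=0$ for the relevant $\bv$ (interior continuity and zero boundary trace), so $\jump{\bv-\Pih\bv}=-\jump{\Pih^D\bv}$; a scaled trace inequality then gives
\[
\norm{h_e^{-\half}\jump{\Pih^D\bv}}_{0,\Eh}^2 \le C\sum_{\K\in\Th}\left(h_\K^{-2}\|\Pih^D\bv\|_{0,\K}^2 + |\Pih^D\bv|_{1,\K}^2\right),
\]
which is controlled by the bounds above. Inserting the $m=2$ rates produces \eqref{eqn: Pih_energy_err}, and the $H^1$ stability produces \eqref{eqn: Pih_energy}.

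The main obstacle is not any single estimate but the bookkeeping that guarantees the enrichment never degrades the order: one must check that $c_\K$ is controlled purely by the \emph{first-order} seminorm of the conforming interpolation error, so that the correction inherits one extra power of $h_\K$ in the $L^2$ norm while remaining merely stable in the $H^1$ seminorm, and that the inter-element jumps of $c_\K(\bx-\bx_\K)$ are absorbed by the penalty term at the correct scale. A secondary technical point is the boundary treatment: $\Pih^C$ must respect the homogeneous boundary condition so that the boundary jumps in $\enorm{\cdot}$ stem only from $\Pih^D\bv$, and one must confirm that for $\bv$ with vanishing trace these boundary jump terms are also $O(h)$. Both points follow from trace and inverse inequalities on shape-regular simplices, so I expect no essential difficulty beyond careful scaling.
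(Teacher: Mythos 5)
Your construction is correct and coincides with the one the paper relies on: the paper states this lemma without proof, citing \cite{YiLeeZikatanov21, YiEtAl22-Stokes}, and there $\Pih$ is built exactly as you propose --- a Scott--Zhang interpolant onto $\Ch$ plus an elementwise correction $c_\K(\bx-\bx_\K)\in\Dh$ fixing the mean divergence, with the same scaling and trace-inequality bookkeeping. Your closing caveat is apt: the boundary-jump terms in \eqref{eqn: Pih_energy_err} are only $O(h)$ for $\bv$ with vanishing trace (the setting in which the lemma is actually applied), since $\Ch\subset\Honezd$ forces the interpolant to vanish on $\partial\Omega$.
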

Also, the local $L^2$-projection $\Pz: \Hone \to \Qh$ satisfies
\begin{subequations}\label{sys: Pz}
\begin{alignat}{2}
& (w - \Pz w, 1) = 0 &\quad \forall w \in \Hone, \label{eqn: Pz_prop} \\
& \norm{ w - \Pz w}_0 \leq C h \norm{w}_1  &\quad \forall w \in H^1(\Omega).  \label{eqn: Pz_err}
\end{alignat}
\end{subequations}

The inf-sup condition and the coercivity and continuity of the bilinear form $\ba(\cdot, \cdot)$ can be proved 
following the same lines as the proofs of Lemma 4.5 in \cite{YiEtAl22-Stokes}. Therefore, we only state the results here. 
\begin{lemma} Provided that $\rho$ is large enough, there exists a constant $\alpha >0$, independent of $h$, such that
\begin{equation}\label{eqn: infsup}
\underset{\substack{\bv \in \Vh \\ \bv \ne 0}}{sup} \frac{\bb(\bv, q)}{\enorm{\bv}} \ge \alpha \norm{q}_0 \quad \forall q \in \Qh. 
\end{equation}
\end{lemma}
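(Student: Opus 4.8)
The plan is to deduce the discrete inf-sup condition from the continuous one by exhibiting $\Pih$ as a Fortin operator. Recall the classical surjectivity of the divergence: since $q \in \Qh \subset \Ltwoz$, there is a continuous inf-sup constant $\beta>0$ and a field $\bv \in \Honezd$ with $\nabla \cdot \bv = q$ and $\snorm{\bv}_1 \le \beta^{-1}\norm{q}_0$. The candidate supremizer is then $\bw := \Pih \bv \in \Vh$, and I would establish two facts: (a) the testing identity $\bb(\Pih \bv, q) = \norm{q}_0^2$, and (b) the stability bound $\enorm{\Pih \bv} \le C\norm{q}_0$. Granting these, restricting the supremum to $\bw$ gives
\[
\sup_{\substack{\bv \in \Vh\\ \bv \ne 0}} \frac{\bb(\bv,q)}{\enorm{\bv}} \ \ge\ \frac{\bb(\Pih\bv,q)}{\enorm{\Pih\bv}} \ \ge\ \frac{\norm{q}_0^2}{C\norm{q}_0} \ =\ \frac{1}{C}\norm{q}_0,
\]
so that \eqref{eqn: infsup} holds with $\alpha = 1/C$ (absorbing $\beta^{-1}$ into $C$), independent of $h$. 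Part (b) is immediate from the stated bound \eqref{eqn: Pih_energy}, $\enorm{\Pih \bv} \le C\snorm{\bv}_1 \le C\beta^{-1}\norm{q}_0$, so no further work is needed there.

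The real content is part (a), and here I would exploit the reconstruction form \eqref{eqn: bib-equi}, writing $\bb(\Pih\bv, q) = (\nabla\cdot \cR\Pih\bv, q)_{\Th}$. Since $\cR\Pih\bv$ lies in the $\text{BDM}_1 \subset H(\text{div},\Omega)$ space, its divergence is piecewise constant, and because $q$ is piecewise constant as well, the pairing collapses to the element-by-element identity $\int_K \nabla\cdot\cR\Pih\bv\,dx = \int_K \nabla\cdot\bv\,dx = \int_K q\,dx$ for every $K \in \Th$; summing then yields $\bb(\Pih\bv,q) = \sum_K (q|_K)^2|K| = \norm{q}_0^2$. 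To verify this element-wise commuting relation I would apply the divergence theorem, $\int_K \nabla\cdot\cR\Pih\bv = \int_{\partial K}\cR\Pih\bv\cdot\bn_K$, and invoke the defining normal-moment property of $\cR$ in \eqref{sys: BDM} so that each interior-edge flux equals $\int_e \avg{\Pih\bv}\cdot\bn_e$ and each boundary-edge flux vanishes; comparing with $\int_{\partial K}\bv\cdot\bn_K$ (using that $\bv$ is continuous and vanishes on $\partial\Omega$) reduces everything to the edge-wise preservation $\int_e \avg{\Pih\bv}\cdot\bn_e\,ds = \int_e \bv\cdot\bn_e\,ds$.

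The main obstacle is precisely this last flux-preservation step. The stated property \eqref{eqn: Pih_prop} only guarantees preservation of the \emph{total} normal flux over $\partial K$, i.e. $\int_{\partial K}(\bv-\Pih\bv)\cdot\bn_K = 0$, whereas the reconstruction introduces edge averages, so I genuinely need the sharper edge-by-edge normal-flux identity (equivalently, that the jump contribution $\langle \jump{\Pih\bv}\cdot\bn_e, \avg{q}\rangle_{\Eh}$ that appears when one instead uses the divergence form \eqref{eqn: bib} cancels). This refinement is not one of the four listed properties but is built into the construction of the enriched interpolant $\Pih$ in \cite{YiLeeZikatanov21, YiEtAl22-Stokes}, where the discontinuous enrichment degrees of freedom are used exactly to match normal fluxes across faces; I would extract that commuting property from those references to close the argument. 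This is why the authors can assert that the proof proceeds "along the same lines" as Lemma 4.5 of \cite{YiEtAl22-Stokes}: the bilinear form $\bb$ is unchanged by the pressure-robust modification, so the Fortin property of $\Pih$ carries over verbatim, and only the energy estimate \eqref{eqn: Pih_energy} and the continuous inf-sup are combined with it.
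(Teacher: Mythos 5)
Your scaffolding --- the continuous inf-sup supplying $\bv \in \Honezd$ with $\nabla\cdot\bv = q$ and $\snorm{\bv}_1 \le C\norm{q}_0$, then $\Pih\bv$ as supremizer together with the stability bound \eqref{eqn: Pih_energy} --- is the right frame and matches the argument the paper points to (it defers to Lemma 4.5 of \cite{YiEtAl22-Stokes}, which is of this Fortin type). The genuine gap is exactly the step you flagged and then assumed away: the face-by-face identity $\int_e \avg{\Pih\bv}\cdot\bn_e\,ds = \int_e \bv\cdot\bn_e\,ds$ is \emph{not} a property of this interpolant, and it cannot be ``extracted'' from the cited constructions. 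There, $\Pih$ is a Scott--Zhang-type interpolant plus a per-element correction in $\Dh$, and $\Dh$ carries a \emph{single} degree of freedom per element (the coefficient $c$ in $c(\bx-\bx_\K)$); that one coefficient is entirely spent enforcing the total-flux property \eqref{eqn: Pih_prop}, leaving nothing with which to match the average normal flux on each of the $d+1$ faces of every element. Consequently your key identity $\bb(\Pih\bv,q) = \norm{q}_0^2$ fails: using \eqref{eqn: bib} and \eqref{eqn: Pih_prop} with piecewise-constant $q$, what one actually obtains is
\begin{equation*}
\bb(\Pih\bv, q) \;=\; \norm{q}_0^2 \;-\; \langle \jump{\Pih\bv}\cdot\bn_e, \avg{q} \rangle_{\Eh},
\end{equation*}
and the residual term is generically nonzero and of size $O(\norm{q}_0^2)$, so it cannot simply be dropped.

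There is also an internal red flag you should have caught: your proof never uses the hypothesis ``provided $\rho$ is large enough,'' yet the lemma carries it. That hypothesis is precisely what handles the residual term above. Since $\bv$ is continuous and vanishes on $\partial\Omega$, one has $\jump{\Pih\bv} = \jump{\Pih\bv - \bv}$, and standard trace/approximation estimates give $\norm{h_e^{-1/2}\jump{\Pih\bv - \bv}}_{0,\Eh} \le C \snorm{\bv}_1$, while for piecewise constants $\norm{h_e^{1/2}\avg{q}}_{0,\Eh} \le C \norm{q}_0$; splitting the Cauchy--Schwarz bound so that the jump factor is paired with the $\rho$-weighted penalty part of $\enorm{\Pih\bv}$ yields $|\langle \jump{\Pih\bv}\cdot\bn_e, \avg{q}\rangle_{\Eh}| \le C\rho^{-1/2}\enorm{\Pih\bv}\norm{q}_0$, after which taking $\rho$ sufficiently large absorbs the perturbation and produces \eqref{eqn: infsup} with $\alpha>0$ independent of $h$. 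So the repair is not a stronger commuting property of $\Pih$ (none exists in $\Vh$) but a quantitative absorption of the jump term via the penalty parameter. One minor further point: in your displayed chain of inequalities the dummy variable $\bv$ of the supremum collides with the specific continuous field $\bv$; rename one of them.
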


\begin{lemma}  \label{lem: a}
Provided that the penalty parameter $\rho$ is large enough, there exist positive constants $\kappa_1$ and $\kappa_2$, independent of $h$ and $\nu$, such that
\begin{alignat}{2}
\ba(\bv, \bv) & \ge \kappa_1 \nu \enorm{\bv}^2 && \quad \forall \bv \in \Vh, \label{eqn: ba_coer} \\
|\ba(\bv, \bw)| & \leq \kappa_2 \nu \enorm{\bv}\enorm{\bw} && \quad \forall \bv, \bw \in \Vh. \label{eqn: ba_conti}
\end{alignat}
\end{lemma}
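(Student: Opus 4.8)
The plan is to follow the classical coercivity--continuity template for symmetric interior penalty DG forms. Since the viscosity $\nu$ enters $\ba(\cdot,\cdot)$ only as an overall multiplicative factor, it suffices to prove both bounds for the bracketed bilinear form and then carry $\nu$ through unchanged; this is exactly what makes $\kappa_1,\kappa_2$ independent of $\nu$. The single technical ingredient is the discrete trace (inverse) inequality: for any $\bv\in\Vh$ and any $\K\in\Th$ with edge/face $e\subset\partial\K$,
\[
\norm{\nabla\bv\,\bn_e}_{0,e}^2 \le C\, h_\K^{-1}\norm{\nabla\bv}_{0,\K}^2,
\]
which, after summation over $\Eh$ and use of shape regularity ($h_e\simeq h_\K$), yields a constant $C_t>0$, independent of $h$, with
\[
\sum_{e\in\Eh} h_e\,\norm{\avg{\nabla\bv}\bn_e}_{0,e}^2 \le C_t\,\norm{\nabla\bv}_{0,\Th}^2.
\]

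For the coercivity bound \eqref{eqn: ba_coer} I would set $\bw=\bv$, so that the two consistency terms coincide and
\[
\ba(\bv,\bv)=\nu\Big(\norm{\nabla\bv}_{0,\Th}^2-2\langle\avg{\nabla\bv}\bn_e,\jump{\bv}\rangle_{\Eh}+\rho\norm{h_e^{-\half}\jump{\bv}}_{0,\Eh}^2\Big).
\]
The only indefinite contribution is the middle term. Inserting the factor $h_e^{\half}h_e^{-\half}$, applying Cauchy--Schwarz edgewise, and then Young's inequality with a parameter $\varepsilon>0$ gives
\[
2\,|\langle\avg{\nabla\bv}\bn_e,\jump{\bv}\rangle_{\Eh}|\le \varepsilon\sum_{e}h_e\norm{\avg{\nabla\bv}\bn_e}_{0,e}^2+\varepsilon^{-1}\norm{h_e^{-\half}\jump{\bv}}_{0,\Eh}^2.
\]
Bounding the first sum by $\varepsilon C_t\norm{\nabla\bv}_{0,\Th}^2$ and choosing $\varepsilon=(2C_t)^{-1}$ leaves
\[
\ba(\bv,\bv)\ge\nu\Big(\tfrac12\norm{\nabla\bv}_{0,\Th}^2+(\rho-2C_t)\norm{h_e^{-\half}\jump{\bv}}_{0,\Eh}^2\Big).
\]
Provided $\rho$ is large enough (concretely $\rho>2C_t$, e.g. $\rho\ge 4C_t$ so that $\rho-2C_t\ge\rho/2$), both coefficients are bounded below by a fixed multiple of the corresponding terms of $\enorm{\bv}^2$, giving \eqref{eqn: ba_coer} with $\kappa_1=\min\{1/2,\,1-2C_t/\rho\}>0$.

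For the continuity bound \eqref{eqn: ba_conti} I would estimate each of the four terms of $\ba(\bv,\bw)$ separately by Cauchy--Schwarz. The volume term is at most $\norm{\nabla\bv}_{0,\Th}\norm{\nabla\bw}_{0,\Th}$ and the penalty term at most $\rho\norm{h_e^{-\half}\jump{\bv}}_{0,\Eh}\norm{h_e^{-\half}\jump{\bw}}_{0,\Eh}$; each consistency term is handled as above, splitting $h_e^{\half}h_e^{-\half}$ and invoking the trace bound to produce $C_t^{\half}\norm{\nabla\bv}_{0,\Th}\norm{h_e^{-\half}\jump{\bw}}_{0,\Eh}$ and its symmetric counterpart. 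Collecting these and using $ab+cd\le(a^2+c^2)^{\half}(b^2+d^2)^{\half}$ bounds $|\ba(\bv,\bw)|$ by $\kappa_2\nu\enorm{\bv}\enorm{\bw}$, with $\kappa_2$ depending only on $C_t$ and $\rho$, hence independent of $h$ and $\nu$.

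The main obstacle is essentially bookkeeping: establishing the discrete trace inequality with a constant $C_t$ uniform over the shape-regular family $\{\Th\}$ (where shape regularity and the equivalence $h_e\simeq h_\K$ enter), and then tracking how large $\rho$ must be so that the negative contribution $-2C_t$ from absorbing the consistency term does not overwhelm the penalty. No genuinely new idea is required beyond the standard symmetric-IPDG argument; the enrichment structure $\bv=\bv^C+\bv^D$ plays no special role here, since all estimates are applied to $\bv\in\Vh$ as a whole.
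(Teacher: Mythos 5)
Your proof is correct and is essentially the paper's own argument: the paper omits the proof of this lemma, citing Lemma 4.5 of \cite{YiEtAl22-Stokes}, which is precisely the standard symmetric-IPDG coercivity/continuity template you reproduce (discrete trace inequality with a shape-regularity-uniform constant, Cauchy--Schwarz and Young's inequality to absorb the consistency term into the penalty term for $\rho$ large enough, with $\nu$ carried through as a pure multiplicative factor). Your bookkeeping is also sound, including the $\rho$-weight inside $\enorm{\cdot}$ in the constant $\kappa_1=\min\{1/2,\,1-2C_t/\rho\}$ and the observation that every $\bv\in\Vh=\Ch\oplus\Dh$ is piecewise $\mathbb{P}_1$, so the discrete trace inequality applies to the enriched space without modification.
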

We now have the following well-posedness result of the \texttt{PR-EG} method.
\begin{theorem}
There exists a unique solution $(\buh, \ph) \in \Vh \times \Qh$ to the \texttt{PR-EG} method,
provided that the penalty term $\rho >0$ is large enough.
\end{theorem}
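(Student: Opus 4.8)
The plan is to recognize \eqref{sys: eg} as a square finite-dimensional linear system and to invoke the standard Babu\v{s}ka--Brezzi saddle-point theory, whose structural hypotheses are supplied precisely by the two lemmas just stated. Since the number of equations equals the number of unknowns, existence of a solution is equivalent to its uniqueness, so it suffices to show that the homogeneous problem (with $\bbf=0$) admits only the trivial solution. The two ingredients are the coercivity and continuity of $\ba(\cdot,\cdot)$ on $\Vh$ from Lemma~\ref{lem: a} and the inf-sup condition \eqref{eqn: infsup} for $\bb(\cdot,\cdot)$. I would also note at the outset that the reconstruction operator $\cR$ enters only through the load term $(\bbf,\cR\bv)$ and leaves the system matrix (hence the well-posedness) identical to that of the \texttt{ST-EG} method, so no new structural difficulty is introduced by the modification.

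For the uniqueness argument I would let $(\buh,\ph)\in\Vh\times\Qh$ solve \eqref{sys: eg} with $\bbf=0$. Choosing $\bv=\buh$ in \eqref{eqn: eg1} and $q=\ph$ in \eqref{eqn: eg2} gives $\bb(\buh,\ph)=0$ and therefore $\ba(\buh,\buh)=0$. The coercivity bound \eqref{eqn: ba_coer} then forces $\kappa_1\nu\enorm{\buh}^2\le 0$, so $\enorm{\buh}=0$. Since $\enorm{\cdot}$ is a genuine norm on $\Vh$ (see below), this yields $\buh=\bzero$. Substituting back into \eqref{eqn: eg1} leaves $\bb(\bv,\ph)=0$ for every $\bv\in\Vh$, and the inf-sup condition \eqref{eqn: infsup} then gives $\alpha\norm{\ph}_0\le 0$, whence $\ph=0$. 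Thus the homogeneous system has only the trivial solution, and existence and uniqueness for arbitrary data follow.

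The one point requiring care, and the closest thing to an obstacle, is verifying that $\enorm{\cdot}$ is indeed a norm on $\Vh$ so that $\enorm{\buh}=0$ implies $\buh=\bzero$. If $\enorm{\bv}=0$ then $\nabla\bv=0$ on each $\K\in\Th$; because $\bv|_\K=\bv^C|_\K+c(\bx-\bx_\K)$ with $\bv^C$ affine, the vanishing gradient forces the enrichment coefficient $c$ to be zero and $\bv^C$ to be constant on each element. The vanishing of $\rho\,\norm{h_e^{-\half}\jump{\bv}}_{0,\Eh}^2$ then makes these element-wise constants agree across interior faces and vanish on boundary faces (where $\jump{\bv}=\bv$), so $\bv\equiv\bzero$. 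I expect the coercivity and inf-sup estimates themselves to be the genuinely technical ingredients, but those are assumed here, so the remaining work is only this bookkeeping together with the elementary reduction from a square system to a uniqueness statement.
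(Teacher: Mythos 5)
Your proof is correct and follows essentially the same route as the paper: finite dimensionality reduces the claim to uniqueness for the homogeneous system, then $\ba(\buh,\buh)=0$ together with the coercivity \eqref{eqn: ba_coer} eliminates the velocity, and the inf-sup condition \eqref{eqn: infsup} eliminates the pressure. One small slip in your auxiliary norm verification: elementwise, $\nabla\bv|_\K=0$ does \emph{not} force the enrichment coefficient $c$ to vanish (take $\bv^C|_\K=-c(\bx-\bx_\K)+\text{const}$, so that $\bv|_\K$ is constant with $c\neq 0$); it only forces $\bv|_\K$ to be constant on each element, which, combined with the vanishing jumps across interior faces and on boundary faces, still yields $\bv\equiv\bzero$, so the conclusion stands.
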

\begin{proof}
Thanks to the finite-dimensionality of $\Vh$ and $\Qh$, it suffices to show the uniqueness of the solution. 
Suppose there exist two solutions $(\bu_{h,1}, p_{h,1})$ and $(\bu_{h,2}, p_{h,2})$ to the \texttt{PR-EG} method, and let $\bw_h = \bu_{h,1} - \bu_{h,2}$ and $r_h = p_{h,1} -p_{h,2}$. 
Then, it is trivial to see that 
\begin{subequations}\label{sys: diff}
\begin{alignat}{2}
\ba(\bw_h,\bv)  - \bb(\bv, r_h) &= 0 &&\quad \forall \bv \in \Vh, \label{eqn: diff_u}\\
\bb(\bw_h, q) & = 0 &&\quad \forall q \in \Qh. \label{eqn: diff_p}
\end{alignat}
\end{subequations}
Taking $\bv = \bw_h$ and $q =r_h$ in the above system \eqref{sys: diff} and adding the two equations, we obtain
\[
\ba(\bw_h,\bw_h) = 0.
\]
Then, the coercivity of $\ba(\cdot, \cdot)$ yields $\bw_h = \bzero$. 
On the other hand, $\bw_h = \bzero$ reduces \eqref{eqn: diff_u} to
\[
\bb(\bv, r_h) = 0 \quad \forall \bv \in \Vh.
\]
Then, we get $r_h =0$ by using the inf-sup condition in \eqref{eqn: infsup}. 
Hence, the proof is complete. 
\end{proof}

In order to facilitate our error analysis, we consider an elliptic projection $\bbuh$ of the true solution $\bu$ defined as follows: 
\begin{equation}\label{eqn: ellip_proj}
\ba(\bbuh, \bv) = \ba(\bu, \bv) + \nu(\Delta \bu, \bv - \cR \bv)_{\Th} \quad \forall \bv \in \Vh.
\end{equation}
Then, let us introduce the following notation: 
\begin{equation}\label{eqn: nota}
 \chiu = \bu - \bbuh, \quad \xiu = \bbuh - \buh, 
 \quad \chip= p - \Pz p, \quad \xip = \Pz p  - \ph.
 \end{equation}
Using the above notation, we have
\[
\bu - \buh = \chiu + \xiu,\quad p -\ph = \chip + \xip.
\]
As we already have an error estimate for $\chip$ in \eqref{eqn: Pz_err}, we only need to establish error estimates for $\chiu$, $\xiu$, and $\xip$. Let us first prove the following lemma, which will be used for an estimate for $\chiu$. 
\begin{lemma} For any $\bv \in \Vh$, we have
\begin{equation}\label{eqn: cR-err}
\norm{ \cR \bv - \bv}_{0} \leq C h \enorm{\bv}.
\end{equation}
\end{lemma}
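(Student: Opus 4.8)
The plan is to reduce the estimate to a purely local, jump-driven computation, using that any $\bv\in\Vh$ is elementwise in $[\mathbb{P}_1(\K)]^d$. Since $\bv|_\K=\bv^C|_\K+\bv^D|_\K$ is piecewise linear and $\text{BDM}_1(\K)=[\mathbb{P}_1(\K)]^d$, the restriction $\bw_\K:=(\cR\bv-\bv)|_\K$ lies in the local $\text{BDM}_1$ space on every $\K\in\Th$. The idea is then to control $\bw_\K$ through the $\text{BDM}_1$ degrees of freedom, namely the normal moments $\int_e\bw_\K\cdot\bnK\,p_1\,ds$ over the edges/faces $e\subset\partial\K$ against $p_1\in\mathbb{P}_1(e)$, and to show these are governed entirely by the jumps $\jump{\bv}$ (the reconstruction only matches the \emph{average} normal trace, so the difference sees exactly the jump).

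The second step is to evaluate these moments from \eqref{sys: BDM}. Using $\avg{\bv}-\bv^\pm=\mp\half\jump{\bv}$ on an interior edge, I expect to find, for every $p_1\in\mathbb{P}_1(e)$,
\[
\int_e \bw_\K\cdot\bnK\,p_1\,ds = -\tfrac12\int_e \jump{\bv}\cdot\bn_e\,p_1\,ds \quad (e\in\Eho),
\]
with an analogous identity on the boundary edges $\Ehb$, where $\cR\bv$ has vanishing normal moments and $\jump{\bv}=\bv$. By Cauchy--Schwarz each moment is bounded by $\norm{\jump{\bv}}_{0,e}\,\norm{p_1}_{0,e}$, so the dual norm of the $e$-moment functional of $\bw_\K$ is controlled by $\norm{\jump{\bv}}_{0,e}$.

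The third step is a scaling argument: by unisolvence of the $\text{BDM}_1$ degrees of freedom and equivalence of norms on the fixed finite-dimensional reference space, transported to $\K$ via the affine map and shape-regularity, one obtains
\[
\norm{\bw_\K}_{0,\K}^2 \leq C\sum_{e\subset\partial\K}\frac{|\K|}{|e|}\,\Big(\sup_{p_1\in\mathbb{P}_1(e)}\frac{\int_e\bw_\K\cdot\bnK\,p_1\,ds}{\norm{p_1}_{0,e}}\Big)^2 \leq C\,h\sum_{e\subset\partial\K}\norm{\jump{\bv}}_{0,e}^2,
\]
using $|\K|/|e|\sim h_\K\sim h_e\sim h$. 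Summing over $\K\in\Th$, noting each edge is shared by at most two elements, and inserting the weight $h_e^{-1}\sim h^{-1}$ gives
\[
\norm{\cR\bv-\bv}_0^2 \leq C\,h\sum_{e\in\Eh}\norm{\jump{\bv}}_{0,e}^2 \leq C\,h^2\sum_{e\in\Eh}h_e^{-1}\norm{\jump{\bv}}_{0,e}^2 \leq C\,h^2\,\enorm{\bv}^2,
\]
which is the claim after taking square roots (the constant absorbs $1/\rho$).

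I anticipate the main obstacle to be the scaling step: getting every power of $h$ right for $\text{BDM}_1$ moments tested against $\mathbb{P}_1(e)$ rather than against constants, and verifying that shape-regularity makes $h_\K$, $h_e$, and $|e|^{1/(d-1)}$ mutually comparable so that the local bound $\norm{\bw_\K}_{0,\K}^2\lesssim h\sum_e\norm{\jump{\bv}}_{0,e}^2$ holds uniformly. A minor point to handle carefully is the boundary edges, where \eqref{sys: BDM} forces the normal moments of $\cR\bv$ to vanish while $\jump{\bv}=\bv$; this only alters the constant $\tfrac12$ and does not affect the final estimate.
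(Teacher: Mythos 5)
Your proof is correct, and it takes a genuinely (if modestly) different route from the paper's. The paper first invokes its Remark that $\cR$ preserves the continuous component, so $\cR \bv - \bv = \cR \bv^D - \bv^D$, and then works entirely in $\text{RT}_0(\K)$: because $\bv^D|_\K = c(\bx - \bx_\K)$ has \emph{constant} normal trace on each flat edge/face, the defining moments \eqref{eqn: RT} upgrade to the pointwise identity $(\cR \bv^D - \bv^D)\cdot\bne = \pm\half \jump{\bv^D}\cdot\bne$ on each $e$, and a single citable scaling inequality for $\text{RT}_0$, namely $\norm{\bphi_\K}_{0,\K} \leq C \sum_{e \subset \partial\K} h_e^{\half} \norm{\bphi_\K\cdot\bnK}_{0,e}$ from Roberts--Thomas, finishes the bound after noting $\jump{\bv^D} = \jump{\bv}$. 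You instead keep the full difference $(\cR\bv - \bv)|_\K \in \text{BDM}_1(\K) = [\mathbb{P}_1(\K)]^d$ and control it through its $\mathbb{P}_1(e)$ normal moments via unisolvence and reference-element norm equivalence; your moment identity $\int_e (\cR\bv - \bv)\cdot\bnK\, p_1\, ds = \mp\half \int_e \jump{\bv}\cdot\bne\, p_1\, ds$ is the weak-form counterpart of the paper's pointwise one, and since $(\cR\bv-\bv)\cdot\bnK \in \mathbb{P}_1(e)$ your dual norm is exactly $\norm{(\cR\bv-\bv)\cdot\bnK}_{0,e}$, so the powers of $h$ come out identically ($|\K|/|e| \sim h_e \sim h$ by shape regularity). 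What the paper's route buys is economy: the constant-normal-trace observation makes the moment computation trivial, and only the standard $\text{RT}_0$ trace-type inequality is needed. What your route buys is independence from the Remark: you never use $\cR\bv^C = \bv^C$ or the splitting $\bv = \bv^C + \bv^D$, so your argument applies verbatim to any piecewise-linear, possibly discontinuous field reconstructed by \eqref{sys: BDM}. One caution on your scaling step, which you rightly flagged as the delicate point: the transport to the reference element should be done with the Piola transform rather than the plain affine pullback, since an affine change of variables does not map normal components to normal components; with the Piola map and shape regularity the uniform constant and the factor $|\K|/|e|$ you assert are indeed standard, so this is a presentational fix, not a gap.
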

\begin{proof}
For any $\phi_\K \in \text{RT}_0(\K)$, using the standard scaling argument~\cite[p. 554]{RobertsThomas91}, we have 
\begin{equation}\label{eqn: rt_bd}
\norm{\bphi_\K}_{0, \K}
\leq C \sum_{e \in \partial \K} h_{e}^{\half} \norm{\bphi_\K \cdot \bn_\K}_{0, e}
\end{equation}
for some constant $C >0$ independent of $h$. 
Apply \eqref{eqn: rt_bd} to $\bphi_\K = (\cR \bv^D - \bv^D)|_{\K} \in \text{RT}_0(\K)$ and note that 
\[
\avg{\bv^D} - \bv^D = \pm \half \jump{\bv^D}.
\]
Then, 
\begin{align*}
\norm{ \cR \bv - \bv}_{0}^2 & =  \norm{\cR \bv^D - \bv^D}_{0, \Th}^2 
\leq C   \norm{h_{e}^\half \jump{\bv^D} \cdot \bne}_{0, \Eh}^2
\leq C   h^2  \norm{h_{e}^{-\half} \jump{\bv^D}}_{0, \Eh}^2 \\
& = C   h^2  \norm{h_{e}^{-\half} \jump{\bv}}_{0, \Eh}^2
\leq  C h^2 \enorm{\bv}^2.
\end{align*}
\end{proof}

\begin{lemma} Let $\bbuh$ be the solution of the elliptic problem \eqref{eqn: ellip_proj}.
{Assuming the true velocity solution $\bu $ belongs to $[H^2(\Omega)]^d$,} the following error estimate holds true: 
\begin{equation}\label{eqn: ellip_proj_err}
\enorm{\bu - \bbuh} \leq C h \norm{\bu}_2. 
\end{equation}
\end{lemma}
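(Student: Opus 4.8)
The plan is to run a standard Céa-type argument for the elliptic projection, splitting the error through the interpolant $\Pih \bu$ and then controlling the purely discrete difference by coercivity. First I would use the triangle inequality to write $\enorm{\bu - \bbuh} \le \enorm{\bu - \Pih\bu} + \enorm{\Pih\bu - \bbuh}$. The first term is immediately bounded by $Ch\norm{\bu}_2$ using the interpolation estimate \eqref{eqn: Pih_energy_err}, so the whole statement reduces to estimating the discrete quantity $\enorm{\bv_h}$, where $\bv_h := \Pih\bu - \bbuh \in \Vh$.

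For this discrete term I would invoke the coercivity \eqref{eqn: ba_coer} to get $\kappa_1\nu\enorm{\bv_h}^2 \le \ba(\bv_h,\bv_h)$, and then expand by linearity as $\ba(\bv_h,\bv_h) = \ba(\Pih\bu,\bv_h) - \ba(\bbuh,\bv_h)$. The crucial step is to substitute the definition \eqref{eqn: ellip_proj} of the elliptic projection with test function $\bv_h$, namely $\ba(\bbuh,\bv_h) = \ba(\bu,\bv_h) + \nu(\Delta\bu,\bv_h - \cR\bv_h)_{\Th}$, which recasts the right-hand side as
\[
\ba(\bv_h,\bv_h) = \ba(\Pih\bu - \bu,\bv_h) - \nu(\Delta\bu,\bv_h - \cR\bv_h)_{\Th}.
\]

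It then remains to bound the two terms. For the first, continuity \eqref{eqn: ba_conti} together with the interpolation bound \eqref{eqn: Pih_energy_err} gives $|\ba(\Pih\bu - \bu,\bv_h)| \le \kappa_2\nu\enorm{\Pih\bu - \bu}\enorm{\bv_h} \le C\nu h\norm{\bu}_2\enorm{\bv_h}$. For the second, which is the consistency term produced by the reconstruction operator and the only nonstandard piece of the argument, I would apply Cauchy–Schwarz together with the just-proved estimate \eqref{eqn: cR-err}: since $\norm{\bv_h - \cR\bv_h}_0 \le Ch\enorm{\bv_h}$ and $\norm{\Delta\bu}_0 \le C\norm{\bu}_2$, one obtains $|\nu(\Delta\bu,\bv_h - \cR\bv_h)_{\Th}| \le C\nu h\norm{\bu}_2\enorm{\bv_h}$. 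Collecting both bounds and dividing through by $\kappa_1\nu\enorm{\bv_h}$ (the factor $\nu$ cancels, which is precisely why the resulting estimate is $\nu$-independent) yields $\enorm{\bv_h} \le Ch\norm{\bu}_2$, and combining this with the interpolation term finishes the proof.

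I expect the only conceptually new step — rather than a genuine obstacle — to be the treatment of the consistency term $\nu(\Delta\bu,\bv_h - \cR\bv_h)_{\Th}$, which is absent in the unmodified Galerkin setting and which motivates the specific right-hand side chosen in \eqref{eqn: ellip_proj}: the $\Delta\bu$ perturbation there is designed exactly so that this term is $O(h)$ rather than $O(1)$, and estimate \eqref{eqn: cR-err} is what makes it work. Everything else is routine once \eqref{eqn: ba_coer}, \eqref{eqn: ba_conti}, \eqref{eqn: Pih_energy_err}, and \eqref{eqn: cR-err} are in hand.
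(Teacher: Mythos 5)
Your proposal is correct and follows essentially the same argument as the paper's proof: split the error through $\Pih\bu$, apply coercivity to the discrete part $\Pih\bu - \bbuh$ (the paper's $\xi_{\bbuh}$ up to sign), substitute the defining relation \eqref{eqn: ellip_proj}, and bound the two resulting terms via continuity with \eqref{eqn: Pih_energy_err} and via Cauchy--Schwarz with \eqref{eqn: cR-err}, letting the factor $\nu$ cancel. Your closing observation about the role of the consistency term $\nu(\Delta\bu, \bv - \cR\bv)_{\Th}$ matches exactly why the elliptic projection is defined with that perturbation.
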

\begin{proof}
We already have
$
\enorm{\bu - \Pih \bu} \leq C h \norm{\bu}_2
$
from \eqref{eqn: Pih_energy_err}. To bound the error $\xi_{\bbuh} :=\bbuh - \Pih \bu$, 
subtract $\ba(\Pih \bu, \bv)$ from both sides of \eqref{eqn: ellip_proj} and take $\bv = \xi_{\bbuh}$  to obtain
\begin{equation*}
\ba( \xi_{\bbuh}, \xi_{\bbuh} ) = \ba(\bu - \Pih \bu , \xi_{\bbuh} ) + \nu(\Delta \bu, \xi_{\bbuh}   - \cR \xi_{\bbuh} )_{\Th}.  \\
\end{equation*}
Then, we bound the left-hand side by using the coercivity of $\ba(\cdot, \cdot)$ and the right-hand side by the continuity of $\ba(\cdot, \cdot)$, along with the Cauchy-Schwarz inequality and \eqref{eqn: cR-err}. Then, we get
\begin{equation*}
\kappa_1 \nu \enorm{\xi_{\bbuh}}^2 \leq \kappa_2 \nu \enorm{\bu - \Pih \bu}\enorm{\xi_{\bbuh}} + \nu \norm{\bu}_2 \norm{\xi_{\bbuh}   - \cR \xi_{\bbuh} }_0 \leq  C h \nu \norm{\bu}_2 \enorm{\xi_{\bbuh}}. 
\end{equation*}
Therefore, \eqref{eqn: ellip_proj_err} follows from the triangle inequality combined with \eqref{eqn: Pih_energy_err}.
\end{proof}

Now, we consider the following lemmas  to derive auxiliary error equations.

\begin{lemma} For all $\bv \in \Vh$, we have
\begin{equation}\label{eqn: cons_b2}
(\nabla p, \cR \bv)_{\Th}  = - \bb(\bv, \Pz p). 
\end{equation}
\end{lemma}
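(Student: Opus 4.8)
The plan is to move the gradient off $p$ by integration by parts and then to exploit the two structural properties of $\cR\bv$: that $\cR\bv\in\text{BDM}_1\subset H(\text{div},\Omega)$ has a normal component that is continuous across interior edges/faces, and that $\cR\bv\cdot\bn_e=0$ on every boundary edge/face by the second group of conditions in \eqref{sys: BDM}. Since $p\in H^1(\Omega)\cap\Ltwoz$ and $\cR\bv\in H(\text{div},\Omega)$, I would integrate by parts element by element,
\begin{equation*}
(\nabla p,\cR\bv)_{\Th}=-(p,\nabla\cdot\cR\bv)_{\Th}+\sum_{\K\in\Th}\langle p,\cR\bv\cdot\bnK\rangle_{\partial\K}.
\end{equation*}

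Next I would show that the interface sum vanishes. On each interior edge/face $e=\partial\K^+\cap\partial\K^-$ the two element contributions combine into $\langle\jump{p},\cR\bv\cdot\bn_e\rangle_e$, using $\bn_{\K^\pm}=\pm\bn_e$ and the normal continuity of $\cR\bv$; this is zero because $p$ is globally $H^1$ and hence single-valued, so $\jump{p}=0$. On each boundary edge/face the contribution is zero because $\cR\bv\cdot\bn_e=0$ there. Consequently
\begin{equation*}
(\nabla p,\cR\bv)_{\Th}=-(p,\nabla\cdot\cR\bv)_{\Th}.
\end{equation*}

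Finally I would use that $\cR\bv\in\text{BDM}_1$ has a piecewise-constant divergence, i.e. $(\nabla\cdot\cR\bv)|_\K\in\mathbb{P}_0(\K)$ for every $\K\in\Th$. Because the factor $(\nabla\cdot\cR\bv)|_\K$ is constant on each element, replacing $p$ by its local $L^2$-projection $\Pz p$ leaves each elementwise integral unchanged, since $\int_\K(p-\Pz p)=0$ by the defining property \eqref{eqn: Pz_prop} of $\Pz$. This yields $(p,\nabla\cdot\cR\bv)_{\Th}=(\Pz p,\nabla\cdot\cR\bv)_{\Th}$, and the already-established representation \eqref{eqn: bib-equi}, namely $\bb(\bv,q)=(\nabla\cdot\cR\bv,q)_{\Th}$, identifies the right-hand side with $\bb(\bv,\Pz p)$. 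Combining the three displays gives the claim.

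The only delicate point is the vanishing of the interface and boundary terms: it rests precisely on the $H(\text{div})$-conformity (normal continuity) of the reconstruction together with the homogeneous normal condition imposed on $\partial\Omega$ in \eqref{sys: BDM}, and on the global $H^1$-regularity of $p$ so that its jumps vanish. Once that is in place, the substitution of $\Pz p$ for $p$ is immediate from the piecewise-constant divergence of $\text{BDM}_1$, and the identity \eqref{eqn: bib-equi} finishes the argument with no further computation.
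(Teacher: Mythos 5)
Your proposal is correct and follows essentially the same route as the paper's own proof: elementwise integration by parts, vanishing of the facet terms via the normal continuity of $\cR\bv$ together with the homogeneous normal condition on $\partial\Omega$ and $p\in H^1(\Omega)$, then substitution of $\Pz p$ for $p$ using $\nabla\cdot\cR\bv\in\mathbb{P}_0(\K)$, and finally the identity \eqref{eqn: bib-equi}. You merely spell out the facet cancellation and the elementwise mean-zero property of $\Pz$ more explicitly than the paper does.
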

\begin{proof}
Recall that $\cR \bv \cdot \bnK$ is continuous across the interior edges (faces) and zero on the boundary edges (faces), and $\nabla \cdot \cR \bv \in \mathbb{P}_0(\K)$ on each $\K \in \Th$. Therefore, using integration by parts, the regularity of the pressure $p$,  and \eqref{eqn: bib-equi}, we obtain 
\begin{align*}
(\nabla p, \cR \bv)_{\Th} & = \sum_{\K \in \Th} \langle p, \cR \bv \cdot \bnK \rangle_{\partial \K} - (p, \nabla \cdot  \cR \bv)_{\Th} \\
& = - (\Pz p, \nabla \cdot  \cR \bv)_{\Th} = - \bb(\bv, \Pz p).
\end{align*}
\end{proof}

\begin{lemma} The auxiliary errors $\xiu$ and $\xip$ satisfy the following equations: 
\begin{subequations}
\begin{alignat}{2}
\ba(\xiu, \bv) - \bb(\bv, \xip) &= 0  &&\quad \forall \bv \in \Vh, \label{eqn: aux_err1} \\
\bb(\xiu, q) &= - \bb(\chiu, q) && \quad \forall q \in \Qh. \label{eqn: aux_err2}
\end{alignat}
\end{subequations}
\end{lemma}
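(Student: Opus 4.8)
The plan is to establish the two identities separately, obtaining each by combining the definitions in \eqref{eqn: nota} with the elliptic projection \eqref{eqn: ellip_proj}, the discrete system \eqref{sys: eg}, the strong form \eqref{sys: governing}, and the consistency identity \eqref{eqn: cons_b2}. For the first equation I would begin by writing $\xiu = \bbuh - \buh$ and $\xip = \Pz p - \ph$, so that
\[
\ba(\xiu, \bv) - \bb(\bv, \xip) = \ba(\bbuh, \bv) - \ba(\buh, \bv) - \bb(\bv, \Pz p) + \bb(\bv, \ph).
\]
I would then replace $\ba(\bbuh, \bv)$ by the right-hand side of \eqref{eqn: ellip_proj} and $\ba(\buh, \bv)$ by the discrete momentum equation \eqref{eqn: eg1}. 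The two $\bb(\bv, \ph)$ contributions cancel, leaving $\ba(\bu, \bv) + \nu(\Delta \bu, \bv - \cR \bv)_{\Th} - (\bbf, \cR \bv) - \bb(\bv, \Pz p)$.

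The crux of the argument is a consistency identity for the true solution, namely $\ba(\bu, \bv) = -\nu(\Delta \bu, \bv)_{\Th}$ for all $\bv \in \Vh$. I would derive this by integrating $(\nabla \bu, \nabla \bv)_{\Th}$ by parts element-wise and collecting the boundary contributions into edge/face terms. Since $\bu \in [H^2(\Omega)]^d \cap \Honezd$, both $\jump{\bu}$ and $\jump{(\nabla \bu)\bn_e}$ vanish on every $e \in \Eho$, and $\bu = 0$ on $\partial\Omega$; hence the interelement contributions reduce to $\langle \avg{\nabla \bu}\bn_e, \jump{\bv}\rangle_{\Eh}$, which exactly cancels the term $-\langle \avg{\nabla \bu}\bn_e, \jump{\bv}\rangle_{\Eh}$ appearing in $\ba(\bu,\bv)$, while the remaining symmetry and penalty terms of \eqref{eqn: bia} drop out because $\jump{\bu} = 0$. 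This matching of the DG consistency terms against the boundary integrals produced by integration by parts is the step I expect to require the most care.

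With the consistency identity in hand, I would substitute $\ba(\bu, \bv) = -\nu(\Delta\bu, \bv)_{\Th}$ and use the momentum equation $\bbf = -\nu\Delta\bu + \nabla p$ from \eqref{eqn: governing1} to rewrite $(\bbf, \cR\bv)$. The two $\pm\nu(\Delta\bu, \bv)_{\Th}$ terms cancel, as do the two $\mp\nu(\Delta\bu, \cR\bv)_{\Th}$ terms, leaving $-(\nabla p, \cR\bv)_{\Th} - \bb(\bv, \Pz p)$. Applying Lemma \eqref{eqn: cons_b2}, which gives $(\nabla p, \cR\bv)_{\Th} = -\bb(\bv, \Pz p)$, makes this expression vanish, establishing \eqref{eqn: aux_err1}.

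The second identity \eqref{eqn: aux_err2} is more direct. Writing $\bb(\xiu, q) = \bb(\bbuh, q) - \bb(\buh, q)$ and invoking the discrete incompressibility constraint \eqref{eqn: eg2} to drop $\bb(\buh, q)$, it remains to show $\bb(\bbuh, q) = -\bb(\chiu, q)$, i.e.\ that $\bb(\bu, q) = 0$. This follows at once from the definition \eqref{eqn: bib} of $\bb$ together with $\nabla\cdot\bu = 0$ from \eqref{eqn: governing2} and $\jump{\bu} = 0$ on $\Eh$ (continuity of $\bu$ across interior faces and $\bu = 0$ on the boundary). Then $\bb(\xiu, q) = \bb(\bbuh, q) = \bb(\bbuh, q) - \bb(\bu, q) = -\bb(\chiu, q)$, which completes the proof.
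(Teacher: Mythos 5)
Your proof is correct and follows essentially the same route as the paper: both rest on the consistency identity $\ba(\bu,\bv) = -\nu(\Delta\bu,\bv)_{\Th}$, the pressure identity \eqref{eqn: cons_b2}, the elliptic projection \eqref{eqn: ellip_proj}, and subtraction of the discrete equations, with your version merely starting from the error expression rather than from the tested momentum equation. The second identity is handled exactly as the paper intends, via $\bb(\bu,q) = (\nabla\cdot\bu, q)_{\Th} = 0$ and \eqref{eqn: eg2}.
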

\begin{proof}
To derive \eqref{eqn: aux_err1}, take the $L^2$-inner product of \eqref{eqn: governing1} with  $\cR \bv$ for $\bv \in \Vh$ to get 
\begin{equation}\label{eqn: der1}
- \nu (\Delta \bu, \bv)_{\Th} + (\nabla p, \cR \bv)_{\Th} = (\bbf, \cR \bv)_{\Th} - \nu(\Delta \bu, \bv - \cR \bv)_{\Th}.
\end{equation}
Note that standard integration by parts and the regularity of $\bu$ yield the following identity:
 $$- \nu (\Delta \bu, \bv)_{\Th}  = \ba(\bu, \bv) \quad \forall \bv \in \Vh.$$
Further, using  \eqref{eqn: cons_b2} for the second term on the left side of \eqref{eqn: der1}, we obtain
\[
\ba(\bu, \bv)  - \bb(\bv, \Pz p) = (\bbf, \cR \bv)_{\Th} - \nu(\Delta \bu, \bv - \cR \bv)_{\Th}. 
\]
Then, subtract \eqref{eqn: eg1} from the above equation and use the definition of the elliptic projection $\bbuh$ in \eqref{eqn: ellip_proj} to obtain the first error equation \eqref{eqn: aux_err1}.  
Next, to prove the second error equation \eqref{eqn: aux_err2}, first note
$$(\nabla \cdot \bu, q)_{\Th}  = \bb(\bu, q) \quad \forall q \in \Qh.$$ 
The rest of the proof is  straightforward. 
\end{proof}

\begin{lemma} 
We have the following error bound: 
\begin{subequations}
\begin{equation}\label{eqn: L2_xip}
\norm{\xip}_0 \leq C \nu \enorm{\xiu}.
\end{equation}
\end{subequations}
\end{lemma}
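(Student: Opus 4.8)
The plan is to combine the inf-sup condition \eqref{eqn: infsup} with the first auxiliary error equation \eqref{eqn: aux_err1} and the continuity of $\ba(\cdot,\cdot)$. Since $\xip = \Pz p - \ph$ is a difference of two elements of $\Qh$, it lies in $\Qh$, so the inf-sup condition applies directly to $q = \xip$, giving
\[
\alpha \norm{\xip}_0 \le \sup_{\substack{\bv \in \Vh \\ \bv \ne 0}} \frac{\bb(\bv, \xip)}{\enorm{\bv}}.
\]
The key observation is that the numerator can be eliminated in favor of the velocity error: equation \eqref{eqn: aux_err1} states precisely that $\bb(\bv, \xip) = \ba(\xiu, \bv)$ for every $\bv \in \Vh$, so the supremum can be rewritten entirely in terms of $\xiu$.

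Next I would bound $\ba(\xiu, \bv)$ using the continuity estimate \eqref{eqn: ba_conti} from Lemma~\ref{lem: a}, which yields $|\ba(\xiu, \bv)| \le \kappa_2 \nu \enorm{\xiu}\enorm{\bv}$. Dividing by $\enorm{\bv}$ removes the dependence on $\bv$, so the supremum is bounded by $\kappa_2 \nu \enorm{\xiu}$. Putting the two steps together gives
\[
\alpha \norm{\xip}_0 \le \kappa_2 \nu \enorm{\xiu},
\]
and dividing through by $\alpha$ produces the claimed bound with $C = \kappa_2/\alpha$.

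Because every inequality is a direct invocation of an already-established result, there is no genuine obstacle here; the only point deserving a moment's care is confirming that $\xip \in \Qh$ so that \eqref{eqn: infsup} is legitimately applicable, which is immediate from its definition. It is worth noting that the factor of $\nu$ on the right-hand side—essential for the subsequent pressure-robust velocity estimates—is inherited verbatim from the continuity constant of $\ba(\cdot,\cdot)$, reflecting that the discrete pressure perturbation $\xip$ is coupled to the velocity error solely through the discrete momentum equation \eqref{eqn: aux_err1}.
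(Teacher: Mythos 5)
Your proposal is correct and follows exactly the paper's own argument: apply the inf-sup condition \eqref{eqn: infsup} to $\xip \in \Qh$, substitute $\bb(\bv,\xip) = \ba(\xiu,\bv)$ via the error equation \eqref{eqn: aux_err1}, and bound the result with the continuity estimate \eqref{eqn: ba_conti}, yielding $C = \kappa_2/\alpha$. There is nothing to add; your explicit check that $\xip \in \Qh$ and your remark on the origin of the factor $\nu$ are sound but do not alter the route.
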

\begin{proof}
We obtain the desired bound by using \eqref{eqn: infsup}, \eqref{eqn: aux_err1}, and \eqref{eqn: ba_conti} as follows:
\begin{equation*}
\norm{\xip}_0   \leq  \frac{1}{\alpha}  \underset{\substack{\bv \in \Vh \\ \bv \ne 0}}{sup} \frac{\bb(\bv, \xip)}{\enorm{\bv}} 
= \frac{1}{\alpha}  \underset{\substack{\bv \in \Vh \\ \bv \ne 0}}{sup}\frac{\ba(\xiu, \bv)}{\enorm{\bv}}
\leq C \nu \enorm{\xiu}. 
\end{equation*}
\end{proof}
The last two lemmas  lead us to the following auxiliary error estimates. 
\begin{lemma}
We have the following error estimates for $\xiu$ and $\xip$:
\begin{subequations}
\begin{align}
\enorm{\xiu} & \leq C h \norm{\bu}_2,  \label{eqn: err1}\\ 
\norm{\xip}_0 & \leq C \nu  h \norm{\bu}_2.\label{eqn: err2}
\end{align}
\end{subequations}
\end{lemma}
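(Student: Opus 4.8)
The plan is to prove the energy bound \eqref{eqn: err1} for $\xiu$ first, since the pressure bound \eqref{eqn: err2} is then immediate: combining the already-established inequality \eqref{eqn: L2_xip}, namely $\norm{\xip}_0 \le C\nu\enorm{\xiu}$, with \eqref{eqn: err1} gives $\norm{\xip}_0 \le C\nu h \norm{\bu}_2$ at once. Hence essentially all of the work lies in estimating $\enorm{\xiu}$.

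To bound $\enorm{\xiu}$, I would start from the coercivity of $\ba(\cdot,\cdot)$ in \eqref{eqn: ba_coer}, which gives $\kappa_1 \nu \enorm{\xiu}^2 \le \ba(\xiu,\xiu)$. Next I feed $\bv = \xiu$ into the first auxiliary error equation \eqref{eqn: aux_err1} to rewrite the right-hand side as $\ba(\xiu,\xiu) = \bb(\xiu,\xip)$, and then insert $q = \xip$ into the second auxiliary error equation \eqref{eqn: aux_err2} to obtain $\bb(\xiu,\xip) = -\bb(\chiu,\xip)$. Thus $\kappa_1\nu\enorm{\xiu}^2 \le |\bb(\chiu,\xip)|$. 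I would bound this last quantity using the standard continuity estimate $|\bb(\bw,q)| \le C\enorm{\bw}\norm{q}_0$ (obtained by Cauchy-Schwarz together with the usual trace and inverse inequalities applied to the jump term in \eqref{eqn: bib}), followed once more by \eqref{eqn: L2_xip}; this yields $\kappa_1\nu\enorm{\xiu}^2 \le C\enorm{\chiu}\,\norm{\xip}_0 \le C\nu\,\enorm{\chiu}\,\enorm{\xiu}$. Dividing through by $\nu\enorm{\xiu}$ leaves $\enorm{\xiu} \le C\enorm{\chiu} = C\enorm{\bu - \bbuh}$, and the elliptic-projection estimate \eqref{eqn: ellip_proj_err} finishes the job: $\enorm{\xiu} \le Ch\norm{\bu}_2$.

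The step I expect to be the genuine obstacle, or at least the one easiest to mishandle, is the apparent circularity: the bound on $\enorm{\xiu}$ passes through $\bb(\chiu,\xip)$, which still involves the unknown $\xip$. The way out is precisely \eqref{eqn: L2_xip}, which controls $\norm{\xip}_0$ by $\nu\enorm{\xiu}$, so that after applying continuity of $\bb$ one extra power of $\enorm{\xiu}$ appears on the right and can be cancelled against the quadratic term on the left. The one ingredient not stated verbatim earlier is the continuity of $\bb(\cdot,\cdot)$ with respect to $\enorm{\cdot}$ and $\norm{\cdot}_0$; establishing it for $\bw = \chiu = \bu - \bbuh$ requires only routine trace-inequality estimates on the jump contribution and is otherwise standard, so I would simply invoke it. It is worth noting that the whole argument produces a velocity estimate in which no negative power of $\nu$ ever appears, which is exactly the pressure-robustness that the reconstruction operator $\cR$ was introduced to achieve.
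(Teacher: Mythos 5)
Your proof is correct, and it reaches \eqref{eqn: err1} by a genuinely different treatment of the crucial term $\bb(\chiu,\xip)$ than the paper. The shared skeleton is identical: coercivity plus the choices $\bv=\xiu$, $q=\xip$ in \eqref{eqn: aux_err1}--\eqref{eqn: aux_err2} give $\kappa_1\nu\enorm{\xiu}^2 \le -\bb(\chiu,\xip)$, and \eqref{eqn: L2_xip} breaks the apparent circularity exactly as you describe. The divergence point is the estimation of $\bb(\chiu,\xip)$. The paper first kills the volume term $(\nabla\cdot\chiu,\xip)_{\Th}$ using the mean-divergence property \eqref{eqn: Pih_prop}, so only the jump term $\langle\jump{\chiu}\cdot\bn_e,\avg{\xip}\rangle_{\Eh}$ survives; it then bounds this by $Ch^{-1}\norm{\chiu}_0\norm{\xip}_0$ via trace inequalities and invokes the second-order $L^2$ bound $\norm{\chiu}_0\le Ch^2\norm{\bu}_2$ from \eqref{eqn: Pih_err}, the extra power of $h$ compensating the $h^{-1}$ from the trace step. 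You instead use the generic continuity bound $|\bb(\bw,q)|\le C\enorm{\bw}\norm{q}_0$ (valid here since the edge term scales correctly for piecewise-constant $q\in\Qh$) together with the \emph{energy-norm} elliptic projection estimate \eqref{eqn: ellip_proj_err}, $\enorm{\chiu}\le Ch\norm{\bu}_2$. Your route is arguably the more self-contained of the two: note that $\chiu=\bu-\bbuh$ involves the elliptic projection, while \eqref{eqn: Pih_prop} and \eqref{eqn: Pih_err} are stated for the interpolant $\Pih\bu$, so the paper's citations strictly require transferring the divergence-mean property and an $O(h^2)$ $L^2$ estimate to $\bbuh$ (the latter would ordinarily need a duality argument); your argument sidesteps both by spending only the first-order energy estimate, which the paper has actually proved for $\bbuh$, and still lands on the same $O(h)$ rate with no negative powers of $\nu$. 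What the paper's sharper bookkeeping would buy, when its ingredients are available, is the isolation of the consistency error in the jump terms alone; what your argument buys is robustness of the proof itself, at no cost in the final estimate. The passage from \eqref{eqn: err1} to \eqref{eqn: err2} via \eqref{eqn: L2_xip} is the same in both.
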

\begin{proof}
Taking $\bv = \xiu$ and $q = \xip$ in \eqref{eqn: aux_err1} and \eqref{eqn: aux_err2}, respectively, and adding the two resulting equations, we get
\[
\ba(\xiu, \xiu) = - \bb(\chiu, \xip).  
\]
Then, we bound the left-hand side from below using the coercivity of $\ba(\cdot, \cdot)$ and the right-hand side by using \eqref{eqn: Pih_prop}, the trace inequality, \eqref{eqn: Pih_err}, and \eqref{eqn: L2_xip}:
\begin{align*}
\kappa_1 \nu \enorm{\xiu}^2 & \leq \ba(\xiu, \xiu) = - \bb(\chiu, \xip) \\
 & = - \langle\jump{ \chiu} \cdot\bn_e,\avg{\xip} \rangle_{\Eh} 
 \leq C \frac{1}{h} \norm{\chiu}_0 \norm{\xip}_0 
 \leq C \nu  h \norm{\bu}_2  \enorm{\xiu},
\end{align*}
from which \eqref{eqn: err1} follows. 
On the other hand, the error bound \eqref{eqn: err2} is an immediate consequence of \eqref{eqn: L2_xip} and \eqref{eqn: err1}.
\end{proof}

We now state the main theorem of this section. 
\begin{theorem} Assuming the true solution, $(\bu, p)$, of the Stokes problem \eqref{sys: governing} belongs to $ [H^2(\Omega)]^d \times (L^2_0(\Omega) \cap H^1(\Omega))$, the solution $(\buh, \ph)$ to our \texttt{PR-EG} method satisfies the following error estimates:
\begin{align*}
\enorm{\bu - \buh} & \leq C h \norm{\bu}_2, \\ 
\norm{p - \ph}_0 & \leq C h (\nu   \norm{\bu}_2 + \norm{p}_1),
\end{align*}
provided that the penalty parameter $\rho$ is large enough.
\end{theorem}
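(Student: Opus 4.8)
The plan is to exploit the error splittings $\bu - \buh = \chiu + \xiu$ and $p - \ph = \chip + \xip$ introduced in \eqref{eqn: nota}, together with the triangle inequality, so that the theorem follows by simply assembling the component estimates proved in the preceding lemmas. First I would treat the velocity error by writing $\enorm{\bu - \buh} \le \enorm{\chiu} + \enorm{\xiu}$ and invoking \eqref{eqn: ellip_proj_err} to bound $\enorm{\chiu} = \enorm{\bu - \bbuh} \le C h \norm{\bu}_2$ and \eqref{eqn: err1} to bound $\enorm{\xiu} \le C h \norm{\bu}_2$. Adding these two contributions immediately yields the first estimate. The crucial point to emphasize is that both bounds are free of $\nu$ and of any norm of $p$, so the velocity error is genuinely pressure-robust; this is exactly the payoff of defining the elliptic projection in \eqref{eqn: ellip_proj} with the reconstruction correction term and of having derived the auxiliary equation \eqref{eqn: aux_err1}, whose structure prevents the appearance of the $1/\nu$ factor that plagues non-robust schemes.

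For the pressure error I would proceed analogously, bounding $\norm{p - \ph}_0 \le \norm{\chip}_0 + \norm{\xip}_0$. The first term is controlled by the projection estimate \eqref{eqn: Pz_err}, giving $\norm{\chip}_0 = \norm{p - \Pz p}_0 \le C h \norm{p}_1$, while the second is controlled by \eqref{eqn: err2}, giving $\norm{\xip}_0 \le C \nu h \norm{\bu}_2$. Combining the two produces $\norm{p - \ph}_0 \le C h(\nu \norm{\bu}_2 + \norm{p}_1)$, as claimed. The regularity hypothesis $p \in H^1(\Omega)$ is used precisely to activate \eqref{eqn: Pz_err}, while $\bu \in [H^2(\Omega)]^d$ is what the estimates \eqref{eqn: ellip_proj_err}, \eqref{eqn: err1}, and \eqref{eqn: err2} all require.

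There is little genuine obstacle remaining at this stage, since the analysis has been staged so that the final theorem is essentially a bookkeeping step: all the delicate work—the consistency identity \eqref{eqn: cons_b2}, the reconstruction approximation bound \eqref{eqn: cR-err}, the coercivity and continuity of $\ba(\cdot,\cdot)$ in Lemma~\ref{lem: a}, and the inf-sup condition \eqref{eqn: infsup}—has already been absorbed into the component estimates. If I were to flag a subtle point, it would be to verify that the constants $C$ entering each component bound are uniform in $h$ and $\nu$ in the manner claimed, so that collecting them preserves the stated dependence. In particular, one should double-check that the factor of $\nu$ in \eqref{eqn: err2} is carried through faithfully, since it is precisely what keeps the pressure estimate from degenerating as $\nu \to 0$ and what makes the two displayed bounds consistent with the advertised pressure-robustness of the method.
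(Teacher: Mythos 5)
Your proposal is correct and matches the paper's intended argument exactly: the paper states this theorem without a written proof precisely because it follows, as you show, by the triangle inequality applied to the splittings $\bu - \buh = \chiu + \xiu$ and $p - \ph = \chip + \xip$, combined with the component bounds \eqref{eqn: ellip_proj_err}, \eqref{eqn: err1}, \eqref{eqn: Pz_err}, and \eqref{eqn: err2}. Your bookkeeping of the $\nu$-dependence and the regularity hypotheses is also consistent with the paper's lemmas, so there is nothing to correct.
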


\section{Perturbed Pressure-Robust EG Method}\label{sec:perturb-EG}
In this section, we consider a perturbed version of the
\texttt{PR-EG} method, Algorithm~\ref{alg:EG-PR},
where a perturbation is introduced in the bilinear form $\ba: \Vh \times \Vh \mapsto \mathbb{R}$. The perturbed bilinear form is denoted by $\ba^D(\cdot,\cdot)$ and assumed to satisfy the following coercivity and continuity conditions: 
\begin{alignat}{2}
	\ba^D(\bv, \bv) & \ge \kappa^D_1 \nu \enorm{\bv}^2 && \quad \forall \bv \in \Vh, \label{eqn: baD_coer} \\
	|\ba^D(\bv, \bw)| & \leq \kappa^D_2 \nu \enorm{\bv}\enorm{\bw} && \quad \forall \bv, \bw \in \Vh, \label{eqn: baD_conti}
\end{alignat}
where $\kappa_1^D$ and $\kappa_2^D$ are positive constants, independent of $h$ and $\nu$.  
Additionally, we make the following assumption on the bilinear form $\ba^D(\cdot, \cdot)$: 
\begin{equation}
\ba^D(\bv^C, \bw) = \ba(\bv^C,\bw) \quad \forall \, \bv^C \in \Ch, \ \forall \bw \in \Vh. \label{eqn: perturb-assump-cons} \tag{A1}
\end{equation}	
This assumption provides a consistency property of $\ba^D(\cdot, \cdot)$ and is useful to maintain the optimal convergence rate of the perturbed pressure-robust EG scheme.

Using a perturbed bilinear form $\ba^D(\cdot, \cdot)$, a perturbed version of the \texttt{PR-EG} method is defined as follows:
\begin{algorithm}[H]
\caption{Perturbed pressure-robust EG (\texttt{PPR-EG}) method }\label{alg:EG-PR-Elimination}
Find $( \bu_h, \ph) \in \Vh \times \Qh $ such that
\begin{subequations}\label{sys: perturb-eg}
	\begin{alignat}{2}
		\ba^D(\buh,\bv)  - \bb(\bv, \ph) &= (\bbf, \cR \bv), &&\quad \forall \bv \in\Vh, \label{eqn: perturb-eg1}\\
		\bb(\buh,q) &= 0, &&\quad \forall q\in \Qh.  \label{eqn: perturb-eg2}
	\end{alignat}
\end{subequations}
Here, a specific choice of $\ba^D(\cdot,\cdot)$ will be presented later in \eqref{eqn: baD_DG}.
\end{algorithm}

In what follows, we establish the well-posedness and optimal-order error estimates of the \texttt{PPR-EG}
method with a general choice of $\ba^D(\cdot,\cdot)$ satisfying the coercivity~\eqref{eqn: baD_coer}, continuity~\eqref{eqn: baD_conti}, and Assumption~\eqref{eqn: perturb-assump-cons}. Then, at the end of this section, we introduce one specific choice of $\ba^D(\cdot, \cdot)$ that allows for the elimination of the degrees of freedom corresponding to the DG component of the velocity vector $\buh$, {\it i.e.,} $\buh^D \in \Dh$, via static condensation.

\subsection{Well-posedness and error estimates}
Based on the coercivity~\eqref{eqn: baD_coer} and continuity~\eqref{eqn: baD_conti} of the perturbed bilinear form $\ba^D(\cdot, \cdot)$, together with the inf-sup condition~\eqref{eqn: infsup}, we have the following well-posedness result of the \texttt{PPR-EG} method, Algorithm~\ref{alg:EG-PR-Elimination}.
\begin{theorem} \label{thm:perturb-eg-wellposed}
	There exists a unique solution $(\buh, \ph) \in \Vh \times \Qh$ to the \texttt{PPR-EG} method,
	provided that the penalty term $\rho >0$ is large enough.
\end{theorem}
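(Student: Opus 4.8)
The plan is to mirror exactly the well-posedness argument already given for the \texttt{PR-EG} method (Theorem preceding this section), since the \texttt{PPR-EG} system \eqref{sys: perturb-eg} has the identical saddle-point structure with only $\ba(\cdot,\cdot)$ replaced by $\ba^D(\cdot,\cdot)$, and the properties invoked in that earlier proof, namely coercivity and the inf-sup condition, are available here too. First I would observe that $\Vh$ and $\Qh$ are finite-dimensional, so it suffices to prove uniqueness; equivalently, that the only solution of the homogeneous system is zero.

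Next I would set up the homogeneous problem. Suppose $(\bu_{h,1}, p_{h,1})$ and $(\bu_{h,2}, p_{h,2})$ are two solutions, and put $\bw_h = \bu_{h,1} - \bu_{h,2}$ and $r_h = p_{h,1} - p_{h,2}$. Subtracting the two copies of \eqref{sys: perturb-eg} and using bilinearity of $\ba^D(\cdot,\cdot)$ and $\bb(\cdot,\cdot)$, the right-hand sides cancel, yielding
\begin{subequations}
\begin{alignat}{2}
\ba^D(\bw_h, \bv) - \bb(\bv, r_h) &= 0 &&\quad \forall \bv \in \Vh, \label{eqn: perturb_diff_u} \\
\bb(\bw_h, q) &= 0 &&\quad \forall q \in \Qh. \label{eqn: perturb_diff_p}
\end{alignat}
\end{subequations}

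Then I would choose $\bv = \bw_h$ in \eqref{eqn: perturb_diff_u} and $q = r_h$ in \eqref{eqn: perturb_diff_p} and add the two equations; the $\bb$-terms cancel, leaving $\ba^D(\bw_h, \bw_h) = 0$. Applying the coercivity assumption \eqref{eqn: baD_coer} gives $\kappa_1^D \nu \enorm{\bw_h}^2 \le 0$, and since $\kappa_1^D \nu > 0$ and $\enorm{\cdot}$ is a norm on $\Vh$, this forces $\bw_h = \bzero$. Substituting $\bw_h = \bzero$ back into \eqref{eqn: perturb_diff_u} reduces it to $\bb(\bv, r_h) = 0$ for all $\bv \in \Vh$, and the inf-sup condition \eqref{eqn: infsup} then yields $\alpha \norm{r_h}_0 \le 0$, so $r_h = 0$. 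Uniqueness—and hence existence—follows.

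The argument is essentially a verbatim transfer of the previous proof, so no genuine obstacle arises; the only point requiring care is confirming that the inf-sup condition \eqref{eqn: infsup} applies unchanged here, which it does because $\bb(\cdot,\cdot)$ is identical in both methods and the inf-sup inequality involves only $\bb$, $\enorm{\cdot}$, and $\Qh$, none of which are affected by replacing $\ba$ with $\ba^D$. I would not need Assumption \eqref{eqn: perturb-assump-cons} or the continuity bound \eqref{eqn: baD_conti} for well-posedness; those are reserved for the error estimates that follow.
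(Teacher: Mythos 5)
Your proof is correct and is exactly the argument the paper intends: the paper omits the proof of Theorem~\ref{thm:perturb-eg-wellposed}, noting only that it follows from the coercivity~\eqref{eqn: baD_coer} and the inf-sup condition~\eqref{eqn: infsup} in the same way as the well-posedness proof for the \texttt{PR-EG} method, which you have transferred verbatim with $\ba$ replaced by $\ba^D$. Your closing observation is also sound: since uniqueness implies existence for the square finite-dimensional system, neither the continuity bound~\eqref{eqn: baD_conti} nor Assumption~\eqref{eqn: perturb-assump-cons} is needed here, even though the paper lists continuity among the cited hypotheses.
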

An a priori error analysis for the \texttt{PPR-EG} method
can be done in a similar fashion to that of the \texttt{PR-EG} method, Algorithm~\ref{alg:EG-PR}. For the sake of brevity, we will only indicate the necessary changes in the previous analysis. First, we define the following elliptic projection $\bbuh$ of the true solution $\bu$ using $\ba^D(\cdot, \cdot)$: 
\begin{equation}\label{eqn: perturb_ellip_proj}
	\ba^D(\bbuh, \bv) = \ba(\bu, \bv) + \nu(\Delta \bu, \bv - \cR \bv)_{\Th} \quad \forall \bv \in \Vh.
\end{equation}
\begin{lemma} 
	Let $\bbuh$ be the solution of the elliptic problem \eqref{eqn: perturb_ellip_proj}.  If Assumption~\eqref{eqn: perturb-assump-cons} holds {and $\bu \in [H^2(\Omega)]^d$}, then  the following error estimate holds true: 
	\begin{equation}\label{eqn: perturb_ellip_proj_err}
		\enorm{\bu - \bbuh} \leq C h \norm{\bu}_2. 
	\end{equation}
\end{lemma}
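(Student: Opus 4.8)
The plan is to mirror the proof of the unperturbed elliptic-projection estimate \eqref{eqn: ellip_proj_err}, making the single structural change forced by the fact that the left-hand side of \eqref{eqn: perturb_ellip_proj} now carries $\ba^D(\cdot,\cdot)$ while the right-hand side still carries $\ba(\cdot,\cdot)$. The bridge between the two bilinear forms is Assumption \eqref{eqn: perturb-assump-cons}, which equates them whenever the \emph{first} argument lies in the conforming space $\Ch$. Accordingly, instead of comparing $\bbuh$ with $\Pih \bu$ (which in general has a nonzero $\Dh$-component), I would compare it with a conforming approximation $\bu_I \in \Ch$ of $\bu$ satisfying the standard interpolation bound $\enorm{\bu - \bu_I} \le C h \norm{\bu}_2$; since $\bu \in \Honezd$ and $\bu_I \in \Ch \subset \Honezd$ are both continuous with zero boundary trace, $\jump{\bu - \bu_I} = 0$ on all of $\Eh$, so this energy norm reduces to the broken gradient norm $\norm{\nabla(\bu - \bu_I)}_{0, \Th}$.

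Then I would set $\xi := \bbuh - \bu_I \in \Vh$ and test \eqref{eqn: perturb_ellip_proj} with $\bv = \xi$, subtracting $\ba^D(\bu_I, \xi)$ from both sides. Because $\bu_I \in \Ch$, Assumption \eqref{eqn: perturb-assump-cons} gives $\ba^D(\bu_I, \xi) = \ba(\bu_I, \xi)$, so the right-hand side collapses to $\ba(\bu - \bu_I, \xi) + \nu(\Delta \bu, \xi - \cR \xi)_{\Th}$ — exactly the same two terms appearing in the unperturbed proof, with no leftover discontinuous cross-term. Bounding the left side from below by the coercivity \eqref{eqn: baD_coer} and the right side by the continuity \eqref{eqn: ba_conti} of $\ba(\cdot,\cdot)$, the Cauchy--Schwarz inequality, and the reconstruction estimate \eqref{eqn: cR-err}, I expect to obtain $\kappa_1^D \nu \enorm{\xi}^2 \le C \nu h \norm{\bu}_2 \enorm{\xi}$, hence $\enorm{\xi} \le C h \norm{\bu}_2$. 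The claim \eqref{eqn: perturb_ellip_proj_err} then follows from the triangle inequality $\enorm{\bu - \bbuh} \le \enorm{\bu - \bu_I} + \enorm{\xi}$.

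The one genuinely new step — and the main thing to get right — is the routing of the argument through $\Ch$ so that Assumption \eqref{eqn: perturb-assump-cons} actually applies. If one instead insisted on keeping $\Pih \bu$ (to reuse \eqref{eqn: Pih_energy_err} verbatim), then $\ba^D(\Pih \bu, \xi)$ would only partially reduce: writing $\Pih \bu = (\Pih \bu)^C + (\Pih \bu)^D$, the continuous part reduces via \eqref{eqn: perturb-assump-cons}, but there remains the term $[\ba - \ba^D]((\Pih \bu)^D, \xi)$, which by \eqref{eqn: ba_conti} and \eqref{eqn: baD_conti} is bounded by $C \nu \enorm{(\Pih \bu)^D} \enorm{\xi}$. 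Controlling this would require $\enorm{(\Pih \bu)^D} \le C h \norm{\bu}_2$; the jump contribution is harmless, since $\jump{(\Pih \bu)^D} = \jump{\Pih \bu - \bu}$ is already $O(h)$ in the energy norm, but the broken-gradient contribution would need a norm-equivalence property of the one-degree-of-freedom-per-element space $\Dh$. Using the conforming interpolant $\bu_I$ sidesteps this entirely, which is why I would take that route; the obstacle is thus not a hard estimate but the correct use of \eqref{eqn: perturb-assump-cons} to keep $\ba^D$ and $\ba$ in sync.
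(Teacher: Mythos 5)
Your proposal is correct and coincides with the paper's own proof: the paper likewise replaces $\Pih\bu$ by the conforming linear Lagrange interpolant $\PiC\bu\in\Ch$ (your $\bu_I$), subtracts $\ba^D(\PiC\bu,\bv)$, invokes Assumption~\eqref{eqn: perturb-assump-cons} to convert it to $\ba(\PiC\bu,\bv)$, and then bounds $\ba^D(\xi_{\bbuh},\xi_{\bbuh}) = \ba(\bu-\PiC\bu,\xi_{\bbuh}) + \nu(\Delta\bu,\xi_{\bbuh}-\cR\xi_{\bbuh})_{\Th}$ using \eqref{eqn: baD_coer}, \eqref{eqn: ba_conti}, Cauchy--Schwarz, and \eqref{eqn: cR-err}, finishing with the triangle inequality. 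Your closing remark about why routing through $\Ch$ is necessary (the leftover $[\ba-\ba^D]((\Pih\bu)^D,\cdot)$ term) correctly identifies the precise reason the paper departs from the unperturbed argument at this one point.
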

\begin{proof}
	Let $\PiC: [H^1(\Omega)]^d \mapsto \Ch$ be the usual linear Lagrange interpolation operator.  Then, we have
	$
	\enorm{\bu - \PiC \bu} = \snorm{\bu - \PiC \bu}_1  \leq C h \norm{\bu}_2.  
	$
	Let  $\xi_{\bbuh} :=\bbuh - \PiC \bu$. Then, 
	subtracting $\ba^D(\PiC \bu, \bv)$ from both sides of \eqref{eqn: perturb_ellip_proj}, taking $\bv = \xi_{\bbuh}$, and using the fact that $\ba^D(\PiC \bu, \bv) = \ba(\PiC \bu, \bv)$ by Assumption~\eqref{eqn: perturb-assump-cons}, we obtain
	\begin{equation*}
		\ba^D( \xi_{\bbuh}, \xi_{\bbuh} ) = \ba(\bu - \PiC \bu , \xi_{\bbuh} ) + \nu(\Delta \bu, \xi_{\bbuh}   - \cR \xi_{\bbuh} )_{\Th}.  
	\end{equation*}
	Then,  using the coercivity of $\ba^D(\cdot, \cdot)$ and the continuity of $\ba(\cdot, \cdot)$, along with the Cauchy-Schwarz inequality and \eqref{eqn: cR-err},  we get
	\begin{equation*}
		\kappa^D_1 \nu \enorm{\xi_{\bbuh}}^2 \leq \kappa_2 \nu \enorm{\bu - \PiC \bu}\enorm{\xi_{\bbuh}} + \nu \norm{\bu}_2 \norm{\xi_{\bbuh}   - \cR \xi_{\bbuh} }_0 \leq  C h \nu \norm{\bu}_2 \enorm{\xi_{\bbuh}}. 
	\end{equation*}
	Therefore, \eqref{eqn: perturb_ellip_proj_err} follows from the triangle inequality $\enorm{\bu - \bbuh} \leq \enorm{\bu - \PiC \bu} + \enorm{\xi_{\bbuh}}$.
\end{proof}
The rest of the error analysis follows the same lines as in Section~\ref{sec: error}, hence the details are omitted here.
We state the optimal error estimates for the \texttt{PPR-EG} method
in the following theorem:

\begin{theorem} \label{thm:perturb_EG_error}
Assume that the solution $(\bu, p) \in [H^2(\Omega)]^d \times {(L^2_0(\Omega) \cap \Hone)}$ and that the perturbed bilinear form $\ba^D$ satisfies \eqref{eqn: baD_coer}, \eqref{eqn: baD_conti}, and \eqref{eqn: perturb-assump-cons}. Then, the solution $(\buh, \ph)$ to the \texttt{PPR-EG} method satisfies the following error estimates  provided that the penalty parameter $\rho$ is large enough.
	\begin{align*}
		\enorm{\bu - \buh} & \leq C h \norm{\bu}_2, \\ 
		\norm{p - \ph}_0 & \leq C h (\nu   \norm{\bu}_2 + \norm{p}_1). 
	\end{align*}
\end{theorem}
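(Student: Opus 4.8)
The plan is to mirror the proof of the error estimates for the \texttt{PR-EG} method from Section~\ref{sec: error}, replacing $\ba(\cdot,\cdot)$ by $\ba^D(\cdot,\cdot)$ in the elliptic projection and in the coercivity estimates, while keeping $\ba(\cdot,\cdot)$ wherever consistency with the continuous equation is invoked. The key point enabling this transfer is Assumption~\eqref{eqn: perturb-assump-cons}, which lets us swap $\ba^D$ for $\ba$ on the continuous Lagrange component, and the coercivity~\eqref{eqn: baD_coer} and continuity~\eqref{eqn: baD_conti} of $\ba^D$, which provide the same structural bounds as Lemma~\ref{lem: a} did for $\ba$. I would reuse the decomposition \eqref{eqn: nota}, namely $\bu - \buh = \chiu + \xiu$ and $p - \ph = \chip + \xip$, where now $\bbuh$ is the perturbed elliptic projection defined in \eqref{eqn: perturb_ellip_proj}.

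First I would record the auxiliary error equations analogous to \eqref{eqn: aux_err1}--\eqref{eqn: aux_err2}. Starting from the momentum equation \eqref{eqn: governing1} tested against $\cR\bv$, using the identity $-\nu(\Delta\bu,\bv)_{\Th} = \ba(\bu,\bv)$ and Lemma's identity \eqref{eqn: cons_b2}, and then subtracting the \texttt{PPR-EG} equation \eqref{eqn: perturb-eg1}, the definition \eqref{eqn: perturb_ellip_proj} of $\bbuh$ makes the consistency terms cancel exactly as before, yielding
\begin{equation*}
\ba^D(\xiu, \bv) - \bb(\bv, \xip) = 0 \quad \forall \bv \in \Vh,
\end{equation*}
together with $\bb(\xiu, q) = -\bb(\chiu, q)$ for all $q \in \Qh$, which follows unchanged from \eqref{eqn: perturb-eg2} since the bilinear form $\bb(\cdot,\cdot)$ is untouched by the perturbation. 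From here, the pressure estimate $\norm{\xip}_0 \leq C\nu\enorm{\xiu}$ follows from the inf-sup condition \eqref{eqn: infsup}, the first error equation, and the continuity \eqref{eqn: baD_conti} of $\ba^D$, exactly as in the proof of \eqref{eqn: L2_xip}.

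Next I would take $\bv = \xiu$ and $q = \xip$, add the two error equations, and obtain $\ba^D(\xiu,\xiu) = -\bb(\chiu,\xip)$. Bounding the left side below by the coercivity \eqref{eqn: baD_coer} and the right side exactly as in the proof of \eqref{eqn: err1} --- using property \eqref{eqn: Pih_prop} of $\Pih$ together with the trace inequality, \eqref{eqn: Pih_err}, and the $\xip$ bound --- gives $\enorm{\xiu} \leq Ch\norm{\bu}_2$ and consequently $\norm{\xip}_0 \leq C\nu h\norm{\bu}_2$. Finally, the triangle inequality combined with the perturbed elliptic projection estimate \eqref{eqn: perturb_ellip_proj_err} and the projection error bounds \eqref{eqn: Pz_err} for $\chip$ yields the claimed velocity and pressure estimates.

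The main obstacle, and the only place the argument genuinely differs from Section~\ref{sec: error}, is verifying that the consistency terms still cancel in the derivation of the first error equation. The elliptic projection \eqref{eqn: perturb_ellip_proj} is defined with $\ba^D$ on the left but $\ba(\bu,\bv)$ on the right, so I must check that the same combination $\ba(\bu,\bv) + \nu(\Delta\bu, \bv - \cR\bv)_{\Th}$ that appears from integrating the momentum equation against $\cR\bv$ matches the right-hand side of \eqref{eqn: perturb_ellip_proj} verbatim; this is exactly how the definition was engineered, so the cancellation goes through. Assumption~\eqref{eqn: perturb-assump-cons} is not needed for this cancellation itself but is what guarantees the projection error estimate \eqref{eqn: perturb_ellip_proj_err} holds with optimal order, which is the ingredient that ultimately controls $\chiu$. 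Since all remaining steps are identical in structure to the unperturbed case, I would state them briefly and refer back to the corresponding lemmas.
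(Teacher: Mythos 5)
Your proposal is correct and follows essentially the same route as the paper: the paper proves only the perturbed elliptic projection estimate \eqref{eqn: perturb_ellip_proj_err} (using $\PiC$ and Assumption~\eqref{eqn: perturb-assump-cons}) and then states that ``the rest of the error analysis follows the same lines as in Section~\ref{sec: error},'' and your reconstruction fills in exactly those lines --- the error equation $\ba^D(\xiu,\bv)-\bb(\bv,\xip)=0$ via the engineered cancellation in \eqref{eqn: perturb_ellip_proj}, the unchanged second equation $\bb(\xiu,q)=-\bb(\chiu,q)$, the inf-sup bound $\norm{\xip}_0\leq C\nu\enorm{\xiu}$ now using \eqref{eqn: baD_conti}, and the coercivity step with \eqref{eqn: baD_coer}. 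You also correctly identify the one genuinely new ingredient, namely that Assumption~\eqref{eqn: perturb-assump-cons} is needed not for the consistency cancellation but for the optimal-order projection estimate controlling $\chiu$, which matches the paper's treatment.
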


\subsection{Perturbed bilinear form $\ba^D$}

In this section, we introduce one particular choice of the perturbed bilinear form $\ba^D(\cdot, \cdot)$, which allows for the elimination of the DG
component of the velocity vector via static condensation.

For any $\bv \in \Vh$, we have a unique decomposition
	$\bv = \bvc + \bvd$,
where $\bvc \in \Ch$ and $\bvd \in \Dh$. Therefore, we have, for $\bv, \ \bw \in \Vh$,
\begin{equation*}
\ba(\bv, \bw) = \ba(\bvc, \bwc) + \ba(\bvc,\bwd) + \ba(\bvd,\bwc) + \ba(\bvd,\bwd).
\end{equation*}
Denote the basis of $\Dh$ by $\{\bPhi_{K} \}_{K \in \Th}$ and write $\bvd = \sum_{K \in \Th} v_K \bPhi_K$ and  $\bwd = \sum_{K \in \Th} w_K \bPhi_K$.
Then,
\begin{equation*}
	\ba(\bvd,\bwd) = \sum_{K, \ K' \in \Th} v_K w_{K'} \ba(\bPhi_K, \bPhi_{K'}).
\end{equation*}
Define a bilinear form $\bd: \Dh \times \Dh \mapsto \mathbb{R}$ by
\begin{equation*}
	\bd(\bvd,\bwd) := \sum_{K \in \Th} v_K w_K \ba(\bPhi_K, \bPhi_K).
\end{equation*}
We define a perturbed bilinear form $\ba^D(\cdot, \cdot)$ of $\ba(\cdot, \cdot)$ by replacing $\ba(\bvd,\bwd)$ with $\bd(\bvd,\bwd)$. That is, for $\bv, \ \bw \in \Vh$,
\begin{equation}\label{eqn: baD_DG}
	\ba^D(\bv, \bw) := \ba(\bvc,\bwc) + \ba(\bvc,\bwd) + \ba(\bvd,\bwc) + \bd(\bvd,\bwd). 
\end{equation}
Note that, for any $\bvc \in \Ch$, 
\begin{equation*}
	\ba^D(\bvc, \bw) = \ba(\bvc,\bwc) + \ba(\bvc,\bwd) = \ba(\bvc, \bw), \quad \forall \, \bw \in \Vh, 
\end{equation*}
which verifies that the bilinear form $\ba^D(\cdot, \cdot)$ defined in \eqref{eqn: baD_DG} satisfies Assumption \eqref{eqn: perturb-assump-cons}.

Next, we want to prove the coercivity and continuity of $\ba^D(\cdot, \cdot)$ on $\Dh$ with respect to $\enorm{\cdot}$. This will be done in several steps. We first introduce a bilinear form $\baE$ corresponding to the mesh-dependent norm $\enorm{\cdot}$:
\begin{equation*}
\baE(\bv, \bw) : = \nu \left ( (\nabla \bv,\nabla \bw)_{\Th}  +  \rho \langle h_e^{-1} \ljump\bw \rjump,\ljump\bv \rjump\rangle_{\Eh} \right ), 
\end{equation*}
from which we immediately see that 
\begin{equation*}
	\nu \enorm{\bv}^2 = \baE(\bv,\bv). 
\end{equation*}
Also, for $\bvd, \ \bwd \in \Dh$, we have
\begin{equation*}
	\baE(\bvd,\bwd) = \sum_{K, \ K' \in \Th} u_K v_{K'} \baE(\bPhi_K, \bPhi_{K'}).
\end{equation*}
Then, we define another bilinear form $\bdE: \Dh \times \Dh \mapsto \mathbb{R}$ as follows:
\begin{equation*}
	\bdE(\bvd,\bwd) := \sum_{K \in \Th} u_K v_K \baE(\bPhi_K, \bPhi_K).
\end{equation*}
Note, from the definitions, that both bilinear forms $\baE$ and $\bdE$ are symmetric and positive definite (SPD). Also, thanks to  the coercivity and continuity of the bilinear form $\ba(\cdot, \cdot)$ on $\Vh$ with respect to $\enorm{\cdot}$ proved in Lemma~\ref{lem: a}, we have the following spectral equivalence results:
\begin{align}
\kappa_1 \baE(\bvd, \bvd) \leq \ba(\bvd, \bvd) \leq \kappa_2 \baE(\bvd, \bvd), \label{eqn: equivalent_a_aE} \\
\kappa_1 \bdE(\bvd, \bvd) \leq \bd(\bvd, \bvd) \leq \kappa_2 \bdE(\bvd, \bvd) \label{eqn: equivalent_d_dE}.
\end{align}
Therefore,  if $\baE(\bvd, \bvd)$ and $\bdE(\bvd, \bvd)$ are spectrally equivalent, the spectral equivalence of  $\ba(\bvd, \bvd)$ and $\bd(\bvd, \bvd)$ follows directly.  In fact, the spectral equivalence between $\baE(\bvd, \bvd)$ and $\bdE(\bvd, \bvd)$ has been shown in~\cite{YiLeeZikatanov21}.

\begin{lemma}\cite{YiLeeZikatanov21}
	There exist positive constants $C_1$ and $C_2$, depending only on the dimension $d$, the shape regularity of the mesh, and the penalty parameter $\rho$, such that 
	\begin{equation} \label{eqn: equivalent_aE_dE}
		C_1 \bdE(\bvd, \bvd) \leq \baE(\bvd, \bvd) \leq C_2 \bdE(\bvd, \bvd) 
		\quad \forall \bvd \in \Dh.
	\end{equation}
\end{lemma}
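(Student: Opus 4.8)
The plan is to prove the two-sided bound \eqref{eqn: equivalent_aE_dE} by exploiting the fact that the basis functions $\{\bPhi_K\}_{K\in\Th}$ have \emph{mutually disjoint supports}, each $\bPhi_K$ being supported on the single element $K$. Writing $\bvd = \sum_{K\in\Th} u_K\bPhi_K$, I would split the energy
\[
\baE(\bvd,\bvd) = \nu\norm{\nabla\bvd}_{0,\Th}^2 + \nu\rho\,\norm{h_e^{-\half}\ljump\bvd\rjump}_{0,\Eh}^2
\]
into its volume (gradient) part and its penalty part, and similarly for $\bdE$. Because the supports are disjoint, $(\nabla\bPhi_K,\nabla\bPhi_{K'})_{\Th}=0$ for $K\neq K'$, so the gradient part is \emph{already diagonal} and coincides with the gradient part of $\bdE$, namely $\nu\sum_K u_K^2\norm{\nabla\bPhi_K}_{0,K}^2$. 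Hence the entire discrepancy between $\baE$ and $\bdE$ lives in the penalty term, and the proof reduces to controlling the off-diagonal coupling created by the jumps across shared faces.

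For the upper bound $\baE(\bvd,\bvd)\le C_2\,\bdE(\bvd,\bvd)$, I would treat the penalty term directly. On an interior face $e=\partial K^+\cap\partial K^-$ only $\bPhi_{K^+}$ and $\bPhi_{K^-}$ have nonzero trace, so $\ljump\bvd\rjump|_e = u_{K^+}\bPhi_{K^+} - u_{K^-}\bPhi_{K^-}$ up to orientation; the triangle inequality gives $\norm{\ljump\bvd\rjump}_{0,e}^2 \le 2\left(u_{K^+}^2\norm{\bPhi_{K^+}}_{0,e}^2 + u_{K^-}^2\norm{\bPhi_{K^-}}_{0,e}^2\right)$. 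Summing over $\Eh$ with the weight $h_e^{-1}$ and reorganizing the sum element-by-element (each face of $K$ contributing once to $K$) yields $\norm{h_e^{-\half}\ljump\bvd\rjump}_{0,\Eh}^2 \le 2\sum_K u_K^2 \sum_{e\in\partial K} h_e^{-1}\norm{\bPhi_K}_{0,e}^2$. Since the right-hand side is exactly twice the penalty contribution to $\sum_K u_K^2\,\baE(\bPhi_K,\bPhi_K)$, and the gradient parts match, this gives \eqref{eqn: equivalent_aE_dE} with, e.g., $C_2=2$.

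The lower bound $C_1\,\bdE(\bvd,\bvd)\le\baE(\bvd,\bvd)$ is where the structure of $\Dh$ is essential, and --- perhaps counterintuitively --- it does \emph{not} require bounding the full penalty from below (which would be delicate, since the jumps can partially cancel). The key observation is that $\nabla\bPhi_K = \nabla(\bx-\bx_K)=I$ is a fixed nonzero constant on $K$, so $\norm{\nabla\bPhi_K}_{0,K}^2 = d\,|K|$ never degenerates. A standard scaling argument together with shape-regularity then bounds each per-element penalty term from above by the per-element gradient energy: using $\norm{\bPhi_K}_{0,e}^2\le C h_K^2|e|$, $h_e\sim h_K$, and $|K|\sim h_K^d$, one gets $\sum_{e\in\partial K} h_e^{-1}\norm{\bPhi_K}_{0,e}^2 \le C\,h_K^d \le C'\norm{\nabla\bPhi_K}_{0,K}^2$. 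Therefore $\baE(\bPhi_K,\bPhi_K)\le(1+C'\rho)\,\nu\norm{\nabla\bPhi_K}_{0,K}^2$ for each $K$, and summing against $u_K^2$ yields $\bdE(\bvd,\bvd)\le (1+C'\rho)\,\nu\norm{\nabla\bvd}_{0,\Th}^2 \le (1+C'\rho)\,\baE(\bvd,\bvd)$, i.e. $C_1=(1+C'\rho)^{-1}$.

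I expect the scaling estimate $\sum_{e\in\partial K} h_e^{-1}\norm{\bPhi_K}_{0,e}^2 \le C'\norm{\nabla\bPhi_K}_{0,K}^2$ to be the main technical point: it should be carried out on a reference element and transported by an affine map, with the shape-regularity of $\Th$ entering so that the constants $C',C_1,C_2$ depend only on $d$, the shape-regularity, and $\rho$, as claimed, and not on $h$. Everything else is bookkeeping built on the disjoint-support structure of the enrichment basis.
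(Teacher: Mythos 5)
Your proposal is correct, but it takes a genuinely different route from the paper: the paper does not prove this lemma on its own, instead deferring entirely to \cite[Equations~(5.23), (5.24), and Lemma~5.6]{YiLeeZikatanov21} (the earlier EG work where the same enrichment space and mesh-dependent norm appear) together with the inclusion $\Dh \subset \Vh$. You give a self-contained elementary argument, and it holds up under scrutiny. Since each $\bPhi_K$ is supported on the single element $K$, the gradient part of $\baE$ is exactly diagonal and coincides with that of $\bdE$, so the comparison indeed reduces to the penalty term. Your upper bound is sound: on an interior face $e = \partial K^+ \cap \partial K^-$ the jump is $\pm(u_{K^+}\bPhi_{K^+} - u_{K^-}\bPhi_{K^-})$, the triangle inequality gives the factor $2$, boundary faces contribute a single trace and are trivially absorbed, and the per-element penalty of $\bPhi_K$ is exactly $\nu\rho\sum_{e\subset\partial K} h_e^{-1}\norm{\bPhi_K}_{0,e}^2$ because the jump of $\bPhi_K$ on each face of $K$ is $\pm\bPhi_K|_e$; hence $C_2 = 2$. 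Your lower bound correctly identifies and sidesteps the genuine obstruction --- jumps of neighboring contributions can partially cancel (e.g.\ $u_{K^+} = u_{K^-}$ leaves a constant jump $u_K(\bx_{K^-}-\bx_{K^+})$), so the penalty part of $\baE$ cannot be bounded below face-by-face by the diagonal penalty sum --- by routing the estimate through the gradient term instead: the explicit bounds $\norm{\bPhi_K}_{0,e}^2 \le h_K^2\,|e|$, $|e| = h_e^{d-1}$, $h_e \simeq h_K$ under shape regularity, and $\norm{\nabla\bPhi_K}_{0,K}^2 = d\,|K| \ge c\,h_K^d$ (here $\nabla\bPhi_K$ is the identity, so this never degenerates) give $\baE(\bPhi_K,\bPhi_K) \le (1+C'\rho)\,\nu\norm{\nabla\bPhi_K}_{0,K}^2$, whence $\bdE(\bvd,\bvd) \le (1+C'\rho)\,\nu\norm{\nabla\bvd}_{0,\Th}^2 \le (1+C'\rho)\,\baE(\bvd,\bvd)$ and $C_1 = (1+C'\rho)^{-1}$. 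Your constants depend only on $d$, the shape regularity, and $\rho$, exactly as claimed. What the citation buys the paper is brevity and consistency with the companion work; what your argument buys is a short, transparent proof that makes visible why the equivalence is special to this one-DoF-per-element enrichment (disjoint supports plus the nondegenerate constant gradient), with an explicit $C_2 = 2$; and, as your own scaling bounds show, no reference-element mapping is actually needed for this particular basis, so your closing caveat is even milder than you suggest.
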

\begin{proof}
The spectral equivalence follows from~\cite[Equations (5.23), (5.24), and Lemma 5.6]{YiLeeZikatanov21} and the fact that $\bvd \in \Dh \subset \Vh$.
\end{proof}

Therefore, we can conclude that $\ba(\bvd, \bvd)$ and $\bd(\bvd, \bvd)$ are spectral equivalent as stated in the following lemma.
\begin{lemma}
	There exist positive constants $C_1$ and $C_2$ such that 	
	\begin{equation}\label{eqn: equivalent_a_d}
		C_1 \bd(\bvd, \bvd) \leq \ba(\bvd, \bvd) \leq C_2 \bd(\bvd, \bvd) \quad \forall \bvd \in \Dh.  
	\end{equation}
Here, $C_1$ and $C_2$ depend only on the dimension $d$, the shape regularity of the mesh, the penalty parameter $\rho$, and $\kappa_1$ and $\kappa_2$. 
\end{lemma}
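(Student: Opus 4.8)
The plan is to prove the spectral equivalence \eqref{eqn: equivalent_a_d} by simply chaining together the three spectral equivalences already established for $\bvd \in \Dh$: the equivalence of $\ba$ and $\baE$ in \eqref{eqn: equivalent_a_aE}, the equivalence of $\baE$ and $\bdE$ in \eqref{eqn: equivalent_aE_dE}, and the equivalence of $\bd$ and $\bdE$ in \eqref{eqn: equivalent_d_dE}. Since spectral equivalence is transitive, and since all four bilinear forms are SPD on $\Dh$, composing these relations yields the desired two-sided bound between $\ba(\bvd,\bvd)$ and $\bd(\bvd,\bvd)$.

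Concretely, for the upper bound I would start from \eqref{eqn: equivalent_a_aE} to write $\ba(\bvd,\bvd) \leq \kappa_2 \baE(\bvd,\bvd)$, then apply the right inequality of \eqref{eqn: equivalent_aE_dE} to get $\baE(\bvd,\bvd) \leq C_2 \bdE(\bvd,\bvd)$, and finally use the left inequality of \eqref{eqn: equivalent_d_dE} in the form $\bdE(\bvd,\bvd) \leq \kappa_1^{-1} \bd(\bvd,\bvd)$. The composition gives $\ba(\bvd,\bvd) \leq (\kappa_2 C_2 / \kappa_1)\, \bd(\bvd,\bvd)$, so the upper constant is $\kappa_2 C_2 / \kappa_1$. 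Symmetrically, for the lower bound I would use the left inequality of \eqref{eqn: equivalent_a_aE}, namely $\ba(\bvd,\bvd) \geq \kappa_1 \baE(\bvd,\bvd)$, the left inequality of \eqref{eqn: equivalent_aE_dE}, $\baE(\bvd,\bvd) \geq C_1 \bdE(\bvd,\bvd)$, and the right inequality of \eqref{eqn: equivalent_d_dE} rewritten as $\bdE(\bvd,\bvd) \geq \kappa_2^{-1} \bd(\bvd,\bvd)$, to obtain $\ba(\bvd,\bvd) \geq (\kappa_1 C_1 / \kappa_2)\, \bd(\bvd,\bvd)$, so the lower constant is $\kappa_1 C_1 / \kappa_2$.

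Renaming these composite constants back to $C_1$ and $C_2$ (a harmless abuse, since the statement only asserts existence of such constants), the dependence claimed in the lemma follows immediately: the constants from \eqref{eqn: equivalent_aE_dE} depend only on $d$, the shape regularity, and $\rho$, while the factors $\kappa_1,\kappa_2$ enter through \eqref{eqn: equivalent_a_aE} and \eqref{eqn: equivalent_d_dE}, matching exactly the dependence stated. There is essentially no obstacle here; the only point requiring any care is bookkeeping the direction of each inequality when inverting \eqref{eqn: equivalent_d_dE} to isolate $\bdE$ in terms of $\bd$, so that the upper and lower chains use the correct endpoints of each equivalence. Since every form involved is SPD on $\Dh$, all quantities are nonnegative and the inequalities may be composed without any sign issues.
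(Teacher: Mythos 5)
Your proposal is correct and follows exactly the paper's own argument: the paper likewise deduces \eqref{eqn: equivalent_a_d} as an immediate consequence of chaining \eqref{eqn: equivalent_a_aE}, \eqref{eqn: equivalent_d_dE}, and \eqref{eqn: equivalent_aE_dE}. Your explicit constants $\kappa_1 C_1/\kappa_2$ and $\kappa_2 C_2/\kappa_1$ are consistent with the dependence stated in the lemma, so nothing further is needed.
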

\begin{proof}
The desired result  is an immediate consequence of ~\eqref{eqn: equivalent_a_aE}, \eqref{eqn: equivalent_d_dE}, and~\eqref{eqn: equivalent_aE_dE}.
\end{proof}

We are now ready to show that the perturbed bilinear form $\ba^D(\cdot, \cdot)$ defined in~\eqref{eqn: baD_DG} satisfies the desired coercivity and continuity conditions.
\begin{lemma}
The bilinear form $\ba^D(\cdot, \cdot)$ defined in~\eqref{eqn: baD_DG} satisfies the coercivity condition~\eqref{eqn: baD_coer} and the continuity condition~\eqref{eqn: baD_conti} with some positive constants $\kappa_1^D$ and $\kappa_2^D$.  Here, the constants $\kappa_1^D$ and $\kappa_2^D$ depend only on the dimension $d$, the shape regularity of the mesh, the penalty parameter $\rho$, and the constants $\kappa_1$ and $\kappa_2$.
\end{lemma}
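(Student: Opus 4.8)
The plan is to prove both inequalities by comparing $\ba^D$ with the energy form $\baE(\cdot,\cdot)$, for which $\nu\enorm{\bv}^2 = \baE(\bv,\bv)$, rather than comparing $\ba^D$ directly with $\ba$. The starting point is the block decomposition of $\ba^D$ induced by $\Vh = \Ch \oplus \Dh$ as in \eqref{eqn: baD_DG}, together with the fact that for a continuous argument $\bvc \in \Ch$ all jump and consistency terms in $\ba$ drop out, so that $\ba(\bvc,\bvc) = \baE(\bvc,\bvc) = \nu\norm{\nabla \bvc}_{0,\Th}^2$ and $\ba(\bvc,\bvd) = \nu\big((\nabla\bvc,\nabla\bvd)_{\Th} - \langle \avg{\nabla\bvc}\bne, \jump{\bvd}\rangle_{\Eh}\big)$. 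I will also use the stability of the splitting, $\enorm{\bvc} + \enorm{\bvd} \le C_s \enorm{\bv}$: since $\jump{\bvc} = 0$ we have $\jump{\bvd} = \jump{\bv}$, and the gradient of a function in $\Dh$ is controlled by its jumps, a property of the enrichment space established in \cite{YiLeeZikatanov21} that also underlies \eqref{eqn: equivalent_aE_dE}.

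Continuity \eqref{eqn: baD_conti} is the easier half. I would expand $\ba^D(\bv,\bw)$ into its four blocks, bound the three $\ba$-blocks by the continuity \eqref{eqn: ba_conti} of $\ba$, and bound the diagonal block by the Cauchy--Schwarz inequality for the symmetric positive-definite form $\bd$ followed by the spectral equivalence \eqref{eqn: equivalent_a_d}, giving $|\bd(\bvd,\bwd)| \le C\nu \enorm{\bvd}\enorm{\bwd}$. Summing the four contributions and invoking the stability of the splitting to replace $\enorm{\bvc},\enorm{\bvd}$ (and their $\bw$-analogues) by $\enorm{\bv}$ and $\enorm{\bw}$ then yields \eqref{eqn: baD_conti}.

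Coercivity \eqref{eqn: baD_coer} is the crux, and the main obstacle is that one cannot simply transfer coercivity from $\ba$ through \eqref{eqn: equivalent_a_d}: replacing the $D$--$D$ block of $\ba$ by its diagonal can destroy positive-definiteness once the $C$--$D$ coupling is present, so the argument must exploit the exact structure. My plan is to first analyze the diagonalized energy form $\baE^D$, defined from $\baE$ exactly as $\ba^D$ is defined from $\ba$. Using $\bvd|_\K = v_\K \bPhi_\K|_\K$, a direct computation gives the identity
\[
\baE^D(\bv,\bv) = \nu\norm{\nabla\bv}_{0,\Th}^2 + \nu\rho \sum_{K\in\Th} v_K^2 \sum_{e \subset \partial K} \norm{h_e^{-\half}\jump{\bPhi_K}}_{0,e}^2 ,
\]
in which the cross term $2\nu(\nabla\bvc,\nabla\bvd)_{\Th}$ has combined with $\nu\norm{\nabla\bvc}_{0,\Th}^2$ and $\nu\norm{\nabla\bvd}_{0,\Th}^2$ into the full gradient $\nu\norm{\nabla\bv}_{0,\Th}^2$ with unit constant, so that \emph{no} coercivity is lost in the coupling. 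Bounding the penalty sum below by $\half\norm{h_e^{-\half}\jump{\bvd}}_{0,\Eh}^2 = \half\norm{h_e^{-\half}\jump{\bv}}_{0,\Eh}^2$, face by face via $\norm{a-b}^2 \le 2\norm{a}^2 + 2\norm{b}^2$, then yields the clean coercivity $\baE^D(\bv,\bv) \ge \half\,\baE(\bv,\bv)$.

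It remains to pass from $\baE^D$ to $\ba^D$. The difference $\ba^D(\bv,\bv) - \baE^D(\bv,\bv)$ consists only of the interior-penalty consistency term $-2\nu\langle\avg{\nabla\bvc}\bne,\jump{\bvd}\rangle_{\Eh}$ and the $\Dh$-discrepancy $\bd(\bvd,\bvd) - \bdE(\bvd,\bvd)$. I would control the consistency term by the trace inequality and Young's inequality, distributing it between $\nu\norm{\nabla\bvc}_{0,\Th}^2$ and the penalty term, and control the $\Dh$-discrepancy through \eqref{eqn: equivalent_d_dE}; taking $\rho$ large enough so that these absorptions leave a fixed positive fraction of $\baE(\bv,\bv)$ gives $\ba^D(\bv,\bv) \ge \kappa_1^D\nu\enorm{\bv}^2$, with $\kappa_1^D,\kappa_2^D$ depending only on $d$, the shape regularity, $\rho$, and $\kappa_1,\kappa_2$ (through $C_1,C_2$). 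The hardest point is precisely this simultaneous absorption, and it is only feasible because the structural identity for $\baE^D$ retains the full gradient; the remaining consistency term is then handled by the usual large-$\rho$ mechanism.
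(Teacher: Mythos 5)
Your proposal is correct, but it takes a genuinely different route from the paper. The paper's entire proof is one sentence: since the forms are SPD, the conclusion ``follows directly'' from the definitions of $\ba(\cdot,\cdot)$ and $\ba^D(\cdot,\cdot)$, the spectral equivalence \eqref{eqn: equivalent_a_d} on $\Dh$, and triangle and Cauchy--Schwarz inequalities. Your proof is a detailed constructive argument, and the obstacle you flag is real: a naive implementation of the paper's recipe writes $\ba^D(\bv,\bv) = \ba(\bv,\bv) + \bd(\bvd,\bvd) - \ba(\bvd,\bvd)$ and bounds the discrepancy via \eqref{eqn: equivalent_a_d} and the stability of the splitting, yielding a coercivity constant of the form $\kappa_1 - (1 - 1/C_2)\,\kappa_2\, C_s^2$, which need not be positive; replacing a diagonal block of an SPD matrix by a spectrally equivalent one does not in general preserve positive definiteness when off-diagonal coupling is present. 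Your structural identity for the diagonalized energy form — that the cross term $2\nu(\nabla\bvc,\nabla\bvd)_{\Th}$ reassembles the full gradient $\nu\norm{\nabla\bv}_{0,\Th}^2$ with unit constant, leaving only the elementwise penalty sum $\nu\rho\sum_K v_K^2\norm{h_e^{-1/2}\jump{\bPhi_K}}_{0,\Eh}^2$ — is exactly the extra input that makes the conclusion true, and your identity and the face-by-face lower bound via $\norm{a-b}^2 \le 2\norm{a}^2 + 2\norm{b}^2$ both check out (note $\jump{\bPhi_K}|_e$ is just the one-sided trace, since $\bPhi_K$ is supported on $K$). What each approach buys: the paper's proof is short and leans on the reader trusting the spectral-equivalence machinery of \eqref{eqn: equivalent_a_aE}--\eqref{eqn: equivalent_aE_dE}; yours makes the mechanism explicit, tracks the claimed dependence of $\kappa_1^D, \kappa_2^D$ on $d$, shape regularity, $\rho$, $\kappa_1$, $\kappa_2$, and in particular documents why diagonalization does not destroy coercivity despite the $\Ch$--$\Dh$ coupling.

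One caution on your final absorption step: a term that itself scales linearly in $\rho$ cannot be absorbed ``by taking $\rho$ large,'' since the available penalty budget also scales linearly in $\rho$. The clean way to organize it within your framework is to apply $\bd(\bvd,\bvd) \ge \kappa_1\,\bdE(\bvd,\bvd)$ directly, so that the $\rho$-scaled penalty sum survives with the fixed factor $\kappa_1$; combined with your identity this gives $\ba^D(\bv,\bv) \ge \nu\norm{\nabla\bv}_{0,\Th}^2 - (1-\kappa_1)\nu\norm{\nabla\bvd}_{0,\Th}^2 + \kappa_1\nu\rho\sum_K v_K^2\norm{h_e^{-1/2}\jump{\bPhi_K}}_{0,\Eh}^2 - 2\nu\langle\avg{\nabla\bvc}\bne,\jump{\bvd}\rangle_{\Eh}$, after which only the two $\rho$-independent deficit terms (the $\nu\norm{\nabla\bvd}^2$ term, controlled elementwise by the penalty sum via scaling, and the consistency term, handled by trace and Young as you say) need the large-$\rho$ mechanism. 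As written, your phrase ``taking $\rho$ large enough so that these absorptions leave a fixed positive fraction'' is ambiguous on this point, but the mechanism you describe is salvageable verbatim with this bookkeeping, so I regard it as a presentational wrinkle rather than a gap.
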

\begin{proof}
As we deal with SPD bilinear forms here,  the conclusion follows directly from the definitions of $\ba(\cdot, \cdot)$ and $\ba^D(\cdot, \cdot)$, the spectral equivalence result~\eqref{eqn: equivalent_a_d} on the enrichment space $\Dh$, and several applications of triangular and Cauchy-Schwarz inequalities. 
\end{proof}

So far, we have verified the coercivity and continuity conditions, ~\eqref{eqn: baD_coer} and ~\eqref{eqn: baD_conti}, and Assumption~\eqref{eqn: perturb-assump-cons} for the perturbed bilinear form $\ba^D(\cdot, \cdot)$ defined in \eqref{eqn: baD_DG}. Therefore, the resulting 
\texttt{PPR-EG} method in Algorithm~\ref{alg:EG-PR-Elimination} is well-posed and converges at the optimal rate as proved in Theorem~\ref{thm:perturb_EG_error}.

\subsection{Elimination of the DG component of the velocity}
The \texttt{PR-EG} method in Algorithm~\ref{alg:EG-PR} results in a linear system in the following matrix form:
\begin{equation}\label{eqn: stiff-A}
\begin{pmatrix}
\sf{A}_{DD} & \sf{A}_{DC} & \sf{G}_{D} \\
\sf{A}_{CD} & \sf{A}_{CC} & \sf{G}_{C} \\
\sf{G}^{\top}_{D}  & \sf{G}^{\top}_{C}    & \sf{0}
 \end{pmatrix}
 \begin{pmatrix}
    \sf{U}^D \\ \sf{U}^C \\ \sf{P}
    \end{pmatrix}
    =
    \begin{pmatrix}
        \sf{f}_D \\ \sf{f}_C \\ \sf{0}
 \end{pmatrix},
 \end{equation}
where $\ba(\bvd,\bwd) \mapsto \sf{A}_{DD}$, $\ba(\bvd,\bwc) \mapsto \sf{A}_{DC}$, $\ba(\bvc,\bwd) \mapsto \sf{A}_{CD}$, $\ba(\bvc,\bwc) \mapsto \sf{A}_{CC}$, $\bb(\bvd, p) \mapsto \sf{G}_{D}$, $\bb(\bvc, p) \mapsto \sf{G}_{C}$, $(\bbf, \cR \bvd) \mapsto \sf{f}_D$, and $(\bbf, \bvc) \mapsto \sf{f}_C$.  

The coefficient matrix in \eqref{eqn: stiff-A} will be denoted by $\mathcal{A}$.
On the other hand, the \texttt{PPR-EG} method, Algorithm~\ref{alg:EG-PR-Elimination},  basically replaces $\sf{A}_{DD}$ in \eqref{eqn: stiff-A} with a diagonal matrix $\sf{D}_{DD} := \texttt{diag}(\sf{A}_{DD})$, which was resulted from $\bd(\bvd,\bwd)$. Therefore, its stiffness matrix is as follows:
\begin{equation*}
	\mathcal{A}^D :=
	\begin{pmatrix}
		\sf{D}_{DD} & \sf{A}_{DC} & \sf{G}_{D} \\
		\sf{A}_{CD} & \sf{A}_{CC} & \sf{G}_{C} \\
		\sf{G}^{\top}_{D}  & \sf{G}^{\top}_{C}    & \sf{0}
	\end{pmatrix}.
\end{equation*}
Indeed, the diagonal block $\sf{D}_{DD} $ allows us to eliminate the DoFs corresponding to the DG component of the velocity vector. Specifically,  we can obtain the following two-by-two block form via static condensation:
\begin{equation*}
	\mathcal{A}^E :=
	\begin{pmatrix}
		\sf{A}_{CC} - \sf{A}_{CD}\sf{D}_{DD}^{-1}\sf{A}_{DC} & \sf{G}_{C} -  \sf{A}_{CD}\sf{D}_{DD}^{-1} \sf{G}_{D}\\
	  \sf{G}^{\top}_{C} - \sf{G}^{\top}_{D}  \sf{D}_{DD}^{-1} \sf{A}_{DC}  & -  \sf{G}^{\top}_{D}  \sf{D}_{DD}^{-1}\sf{G}_{D}
	\end{pmatrix} :=
    \begin{pmatrix}
    	\sf{A}^E_u & \sf{G}^E \\
    	(\sf{G}^E)^{\top} & -\sf{A}^E_p
    \end{pmatrix}.
\end{equation*}
The method corresponding to this condensed linear system is well-posed as it is obtained from the well-posed \texttt{PPR-EG} method via state condensation (see \cite[Theorem 3.2]{adler2020robust} for a similar proof). Moreover, this method has the same DoFs as the $H^1$-conforming $\mathbb{P}_1$-$\mathbb{P}_0$ method for the Stokes equations. Therefore, this method can be viewed as a stabilized $\mathbb{P}_1$-$\mathbb{P}_0$ scheme for the Stokes equations, where a stabilization term appears in every sub-block.  Similar stabilization techniques have been studied in~\cite{RodrigoGasparHuZikatanov2016a, rodrigoNewStabilizedDiscretizations2018}. 
We emphasize that this new stabilized $\mathbb{P}_1$-$\mathbb{P}_0$ scheme is not only stable but also pressure-robust. 
Then, the algorithm to find $\sf{U} = (\sf{U}^D, \sf{U}^C)^{\top}$ and $\sf{P}$ is summarized in Algorithm~\ref{alg:EG-elimination-2}.
 \alglanguage{pseudocode}
\begin{algorithm}[ht!]
\caption{Condensed pressure-robust EG (\texttt{CPR-EG}) method }\label{alg:EG-elimination-2}
\begin{algorithmic}[1]
\State Compute $\mathsf{f}^E = (\sf{f}_C - \sf{A}_{CD}\sf{D}_{DD}^{-1}\sf{f}_D, -\sf{G}_D^{\top}\sf{D}_{DD}^{-1}\sf{f}_D)$
\State Solve $\mathcal{A}^E \mathsf{x}^E = \mathsf{f}^E$ for $\mathsf{x}^E$
\State Set $(\sf{U}^C, \sf{P})= \mathsf{x}^E$
\State Compute $\sf{U}^D = \sf{D}_{DD}^{-1}(\sf{f}_D - \sf{A}_{DC}\sf{U}^C - \sf{G}_D\sf{P})$
\end{algorithmic}
\end{algorithm}

\subsection{Block preconditioners} \label{sec:block-prec}
In this subsection, we discuss block preconditioners for solving the linear systems resulted from the three pressure-robust EG algorithms, {\it i.e.}, the \texttt{PR-EG}, \texttt{PPR-EG}, and \texttt{CPR-EG} methods. We mainly follow the general framework developed in~\cite{loghin2004analysis,mardal2011preconditioning,adler2017robust,adler2020robust} to design robust block preconditioners. The main idea is based on the well-posedness of the proposed EG discretizations.

As mentioned in Section~\ref{sec: EG}, the \texttt{ST-EG} method in Algorithm~\ref{alg:EG} and the \texttt{PR-EG} method in Algorithm~\ref{alg:EG-PR} have the same stiffness matrices but different right-hand-side vectors. 
Therefore, the block preconditioners developed for the \texttt{ST-EG} method in~\cite{YiEtAl22-Stokes} can be directly applied for the \texttt{PR-EG} method.
For the sake of completeness, we recall those preconditioners here as follows.
\begin{equation*}
\mathcal{B}_D =
\begin{pmatrix}
	\sf{A}_{u} & \sf{0} \\
	 \sf{0}    & \nu^{-1}\sf{M}_p
\end{pmatrix}^{-1},
\quad 
\mathcal{B}_L =
\begin{pmatrix} 
	\sf{A}_{u} & \sf{0} \\
	\sf{G}^{\top}   & \nu^{-1}\sf{M}_p
\end{pmatrix}^{-1},
\quad
\mathcal{B}_U =
\begin{pmatrix}
	\sf{A}_{u} &  \sf{G} \\
	\sf{0}   & \nu^{-1}\sf{M}_p
\end{pmatrix}^{-1},
\end{equation*}
where 
\begin{equation*}
\sf{A}_u = 
\begin{pmatrix}
	\sf{A}_{DD} & \sf{A}_{DC} \\
	\sf{A}_{CD} & \sf{A}_{CC}  \\
\end{pmatrix},
\quad 
\sf{G} = 
\begin{pmatrix}
 \sf{G}_D \\
 \sf{G}_C \\
\end{pmatrix},
\end{equation*}
and $\sf{M}_p$ is the mass matrix, {\it i.e.},  $(p_h, q_h) \mapsto \sf{M}_p$. We want to point out that, in~\cite{YiEtAl22-Stokes}, the block corresponding to the velocity part is based on the mesh-dependent norm on $\Vh$.  Here, we directly use the blocks from the stiffness matrix $\mathcal{A}$. Lemma~\ref{lem: a} makes sure that this still leads to effective preconditioners.  As suggested~\cite{YiEtAl22-Stokes}, while the inverse of $\sf{M}_p$ is trivial since it is diagonal, inverting the diagonal block $\sf{A}_u$ could be expensive and sometimes infeasible. Therefore, we approximately invert this diagonal block and define the following inexact counterparts
\begin{equation*}
	\mathcal{M}_D =
	\begin{pmatrix}
		\sf{H}_{u} & \sf{0} \\
		\sf{0}    & \nu\sf{M}^{-1}_p
	\end{pmatrix},
	\quad 
	\mathcal{M}_L =
	\begin{pmatrix} 
		\sf{H}_{u}^{-1} & \sf{0} \\
		\sf{G}^{\top}   & \nu^{-1}\sf{M}_p
	\end{pmatrix}^{-1},
	\quad
	\mathcal{M}_U =
	\begin{pmatrix}
		\sf{H}_{u}^{-1} &  \sf{G} \\
		\sf{0}   & \nu^{-1}\sf{M}_p
	\end{pmatrix}^{-1},
\end{equation*}
where $\sf{H}_u$ is spectrally equivalent to $\sf{A}_u^{-1}$.  

Next, we consider the \texttt{PPR-EG} method, Algorithm~\ref{alg:EG-PR-Elimination}.  Since it is well-posed, as shown in Theorem~\ref{thm:perturb-eg-wellposed}, we can similarly develop the corresponding block preconditioners as follows:
\begin{equation*}
	\mathcal{B}^D_D =
	\begin{pmatrix}
		\sf{A}^D_{u} & \sf{0} \\
		\sf{0}    & \nu^{-1}\sf{M}_p
	\end{pmatrix}^{-1},
	\quad 
	\mathcal{B}^D_L =
	\begin{pmatrix} 
		\sf{A}^D_{u} & \sf{0} \\[5pt]
		\sf{G}^{\top}   & \nu^{-1}\sf{M}_p
	\end{pmatrix}^{-1},
	\quad
	\mathcal{B}^D_U =
	\begin{pmatrix}
		\sf{A}^D_{u} &  \sf{G} \\
		\sf{0}   & \nu^{-1}\sf{M}_p
	\end{pmatrix}^{-1},
\end{equation*}
where 
\begin{equation*}
	\sf{A}^D_u = 
	\begin{pmatrix}
		\sf{D}_{DD} & \sf{A}_{DC} \\
		\sf{A}_{CD} & \sf{A}_{CC}  \\
	\end{pmatrix},
\end{equation*}
and their inexact versions providing more practical values:
\begin{equation*}
	\mathcal{M}^D_D =
	\begin{pmatrix}
		\sf{H}^D_{u} & \sf{0} \\
		\sf{0}    & \nu\sf{M}^{-1}_p
	\end{pmatrix},
	\hskip .5pt
	\mathcal{M}^D_L =
	\begin{pmatrix} 
		(\sf{H}^D_{u})^{-1} & \sf{0}\\
		\sf{G}^{\top}   &
		{{\nu}^{-1}}\sf{M}_p
	\end{pmatrix}^{-1},
		\hskip .5pt
	\mathcal{M}^D_U =
	\begin{pmatrix}
		(\sf{H}^D_{u})^{-1} &  \sf{G} \\
		\sf{0}   & 
		{{\nu}^{-1}}\sf{M}_p
	\end{pmatrix}^{-1},
\end{equation*}
where $\sf{H}^D_u$ is spectrally equivalent to $(\sf{A}^D_u)^{-1}$.  

Finally, we consider the \texttt{CPR-EG} method, Algorithm~\ref{alg:EG-elimination-2}.
The following block preconditioners are constructed based on the well-posedness of the \texttt{CPR-EG} method: 
\begin{equation*}
	\mathcal{B}^E_D =
	\begin{pmatrix}
		\sf{A}^E_{u} & \sf{0} \\
		\sf{0}    &  \sf{S}_p^E
	\end{pmatrix}^{-1},
	\quad 
	\mathcal{B}^E_L =
	\begin{pmatrix} 
		\sf{A}^E_{u} & \sf{0} \\
		(\sf{G}^E)^{\top}   & \sf{S}^E_p
	\end{pmatrix}^{-1},
	\quad
	\mathcal{B}^E_U =
	\begin{pmatrix}
		\sf{A}^E_{u} &  \sf{G}^E \\
		\sf{0}   & \sf{S}^E_p
	\end{pmatrix}^{-1},
\end{equation*}
where $\sf{S}_p^E:=\nu^{-1}\sf{M}_p + \sf{A}^E_p = \nu^{-1}\sf{M}_p + \sf{G}^{\top}_{D}  \sf{D}_{DD}^{-1}\sf{G}_{D}$.  In this case, the second diagonal block, $\sf{S}_p^E$, is not a diagonal matrix anymore. Therefore, in the inexact version block preconditioners, we need to replace $(\sf{S}^E_p)^{-1}$ by its spectrally equivalent approximation $\sf{H}^E_p$.  This leads to the following inexact block preconditioners:
\begin{equation*}
	\mathcal{M}^E_D =
	\begin{pmatrix}
		\sf{H}^E_{u} & \sf{0} \\
		\sf{0}    & \sf{H}^{E}_p
	\end{pmatrix},
	\  
	\mathcal{M}^D_L =
	\begin{pmatrix} 
		(\sf{H}^E_{u})^{-1} & \sf{0}\\
		(\sf{G}^E)^{\top}   & (\sf{H}^E_p)^{-1}
	\end{pmatrix}^{-1},
	\ 
	\mathcal{M}^E_U =
	\begin{pmatrix}
		(\sf{H}^D_{u})^{-1} &  \sf{G}^E \\
		\sf{0}   & (\sf{H}^E_p)^{-1}
	\end{pmatrix}^{-1},
\end{equation*}
where $\sf{H}^E_u$ and $\sf{H}^E_p$ are spectrally equivalent to $(\sf{A}^E_u)^{-1}$ and $(\sf{A}^E_p)$, respectively.

Before closing this section, we want to point out that, following the general framework in \cite{loghin2004analysis, mardal2011preconditioning}, we can show that the  block diagonal preconditioners are parameter-robust and can be applied to the minimal residual methods.  On the other hand, following the framework presented in~\cite{loghin2004analysis,adler2017robust,adler2020robust}, we can also show that block triangular preconditioners are field-of-value equivalent preconditioners and can be applied to the {generalized} minimal residual (GMRES) method.  
We omit the proof here, but we refer the readers to our previous work~\cite{adler2017robust,adler2020robust,YiEtAl22-Stokes} for similar proofs.

\section{Numerical Examples}
\label{sec:nume_examples}
In this section, we conduct numerical experiments to validate our theoretical conclusions presented in the previous sections.
The numerical experiments are implemented by authors' codes developed based on iFEM \cite{CHE09}.
To distinguish the numerical solutions using the four different EG algorithms considered in the present work, we use the following notations: 
\begin{itemize}
    \item $(\bu_h^{\texttt{ST}},p_h^{\texttt{ST}})$: Solution by the \texttt{ST-EG} method in Algorithm~\ref{alg:EG}.
    \item $(\bu_h^{\texttt{PR}},p_h^{\texttt{PR}})$: Solution by the \texttt{PR-EG} method in Algorithm~\ref{alg:EG-PR}.
    \item $(\bu_h^{\texttt{PPR}},p_h^{\texttt{PPR}})$: Solution by the \texttt{PPR-EG} method in Algorithm~\ref{alg:EG-PR-Elimination}.
    \item $(\bu_h^{\texttt{CPR}},p_h^{\texttt{CPR}})$: Solution by the \texttt{CPR-EG} method in
    Algorithm~\ref{alg:EG-elimination-2}.
\end{itemize}

The error estimates for the \texttt{ST-EG} method have been proved in \cite{YiEtAl22-Stokes}:
\begin{subequations}\label{sys: errboundst}
\begin{alignat}{1}
& \|\bu-\bu_h^{\texttt{ST}}\|_\mathcal{E}\lesssim h \left( \|\mathbf{u}\|_2+ \nu^{-1}\|p\|_1 \right ), \label{eqn: errboundstu} \\
& \|\mathcal{P}_0p-p_h^{\texttt{ST}}\|_0\lesssim  h \left( \nu \|\mathbf{u}\|_2+ \|p\|_1 \right ), \quad 
\|p-p_h^{\texttt{ST}}\|_0 \lesssim  h \left ( \nu\|\mathbf{u}\|_2+ \|p\|_1\right ).\label{eqn: errboundstp}
\end{alignat}
\end{subequations}
Also, recall the error estimates for the 
\texttt{PR-EG} method:
\begin{subequations}\label{sys: errboundpr}
\begin{alignat}{1}
& \|\bu-\bu_h^{\texttt{PR}}\|_\mathcal{E} \lesssim h\|\mathbf{u}\|_2, \label{eqn: errboundpru} \\
& \|\mathcal{P}_0p-p_h^{\texttt{PR}}\|_0 \lesssim \nu h\|\mathbf{u}\|_2, \quad
\|p-p_h^{\texttt{PR}}\|_0 \lesssim  h \left ( \nu \|\mathbf{u}\|_2+ \|p\|_1 \right).  \label{eqn: errboundprp}
\end{alignat}
\end{subequations}
Note that $(\bu_h^{\texttt{PPR}},p_h^{\texttt{PPR}})$ and $(\bu_h^{\texttt{CPR}},p_h^{\texttt{CPR}})$ satisfy the same error estimates as in \eqref{sys: errboundpr}.

In two- and three-dimensional numerical examples, we show the optimal convergence rates  and pressure-robustness of the pressure-robust EG methods (\texttt{PR-EG}, \texttt{PPR-EG}, and \texttt{CPR-EG}) via mesh refinement study and by considering a wide range of the viscosity value. To highlight the better performance of the pressure-robust EG methods than the \texttt{ST-EG} method, we compare the magnitudes and behaviors of the errors produced by them as well. We also demonstrate the improved computational efficiency of the \texttt{CPR-EG} method compared to the \texttt{PR-EG} method. Further, we present some results of the performance of the block preconditioners developed in Subsection~\ref{sec:block-prec} on three-dimensional benchmark problems to show its robustness and effectiveness.

\subsection{A two-dimensional example}
We consider an example problem in two dimensions.
In this example, the penalty parameter was set to $\rho=10$. 

\subsubsection{Test 1: Vortex flow}\label{sect:NumTest-1}
Let the computational domain be $\Omega=(0,1)\times (0,1)$. The velocity field and pressure are chosen as
\begin{equation*}
    \bu
    = \left(\begin{array}{c}
    10x^2(x-1)^2y(y-1)(2y-1) \\
    -10x(x-1)(2x-1)y^2(y-1)^2
    \end{array}\right),
    \quad
    p = 10(2x-1)(2y-1).
\end{equation*}
Then the body force $\bbf$ and the Dirichlet boundary condition $\bu = \mathbf{g}$ are obtained from \eqref{sys: governing} using the exact solutions.

\noindent\textbf{Accuracy test.}
First, we perform a mesh refinement study for both the \texttt{ST-EG} and \texttt{PR-EG} methods by varying the mesh size $h$ while keeping  $\nu = 10^{-6}$.
The results are summarized in Table~\ref{test1_h}, where we observe that the  convergence rates for the velocity and pressure errors for both methods are of at least first-order.
The velocity error for the \texttt{ST-EG} method
seems to converge at the order of $1.5$, but the \texttt{PR-EG} method yields about five orders of magnitude smaller velocity errors than the standard method. On the other hand, the total pressure errors produced by the two methods are very similar in magnitude. 
Therefore, our numerical results support our theoretical error estimates in \eqref{sys: errboundst} and \eqref{sys: errboundpr}.

\setlength\tabcolsep{4.2pt}
\begin{table}[!h]
    \centering
    \begin{tabular}{|c||c|c|c|c||c|c|c|c|}
    \hline
        &  \multicolumn{4}{c||}{\texttt{ST-EG}
        } &  \multicolumn{4}{c|}{\texttt{PR-EG}
        }\\
    \cline{2-9}   
       $h$  & {\small $\enorm{\bu-\buh^{\texttt{ST}}}$} & {\small Rate} & {\small$\|p-p_h^{\texttt{ST}}\|_0$} & {\small Rate} & {\small$\|\bu-\bu_h^{\texttt{PR}}\|_{\mathcal{E}}$} & {\small Rate} & {\small$\|p-p_h^{\texttt{PR}}\|_0$} & {\small Rate} \\ 
       \hline
       $1/4$ & 1.959e+5 & - & 1.166e+0 & -  & 2.200e-1 & - & 9.547e-1 & - \\
       \hline
       $1/8$  & 7.140e+4 & 1.46 & 5.180e-1 & 1.17  & 1.060e-1 & 1.05 & 4.802e-1 & 0.99 \\
       \hline
       $1/16$  & 2.468e+4 & 1.53 & 2.483e-1 & 1.06  & 4.920e-2 & 1.11 & 2.404e-1 & 1.00 \\
       \hline
       $1/32$  & 8.552e+3 & 1.53 & 1.223e-1 & 1.02 & 2.372e-2 & 1.05 & 1.203e-1 & 1.00 \\
       \hline
       $1/64$  & 2.987e+3 & 1.52 & 6.078e-2 & 1.01 &  1.166e-2 & 1.02 & 6.014e-2 & 1.00\\
       \hline
    \end{tabular}
    \caption{Test~\ref{sect:NumTest-1}, Vortex flow: A mesh refinement study for the \texttt{ST-EG} and \texttt{PR-EG} methods with varying mesh size $h$ and a fixed viscosity $\nu = 10^{-6}$.}
    \label{test1_h}
\end{table}

Next, to visualize the quality difference in the solutions, we present the numerical solutions obtained by the two methods with $h=1/16$ and $\nu=10^{-6}$ in Figure~\ref{test1_numesol}. 
As expected, the two methods produce nearly the same pressure solutions. As for the velocity solutions, the \texttt{PR-EG} method well captures the vortex flow pattern, while the \texttt{ST-EG} method is unable to do so. 

\begin{figure}[!htb]
\begin{subfigure}{\linewidth}
\centering
    \includegraphics[width=3.5cm]{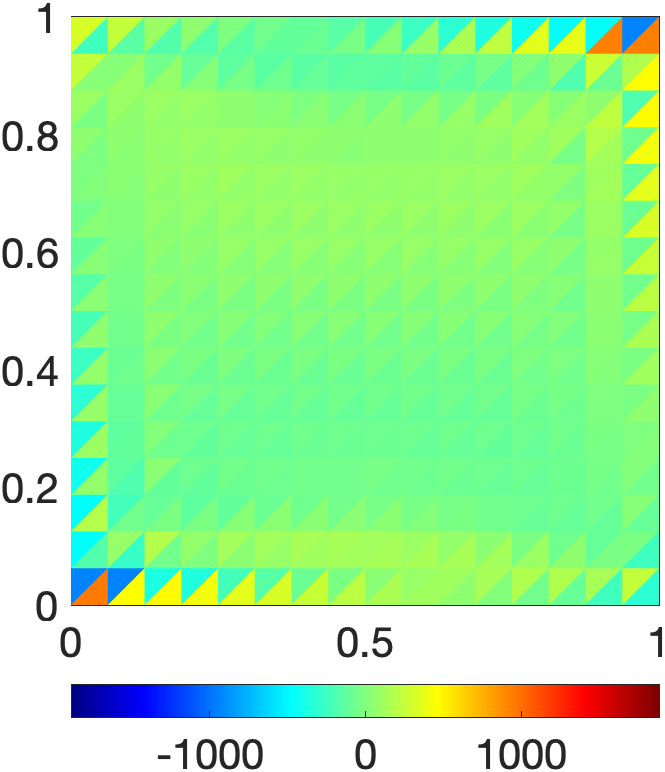}
    \hskip 15pt
    \includegraphics[width=3.5cm]{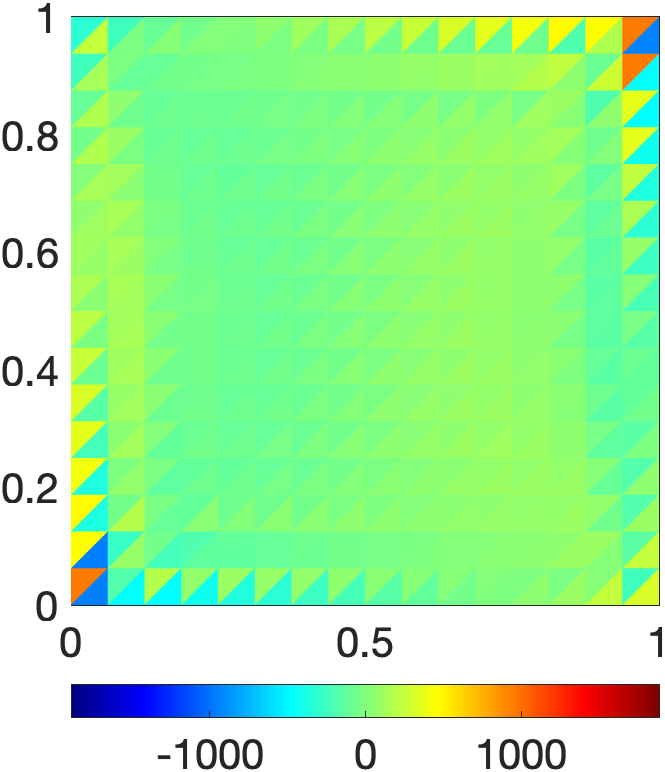}
    \hskip 15pt
    \includegraphics[width=3.5cm]{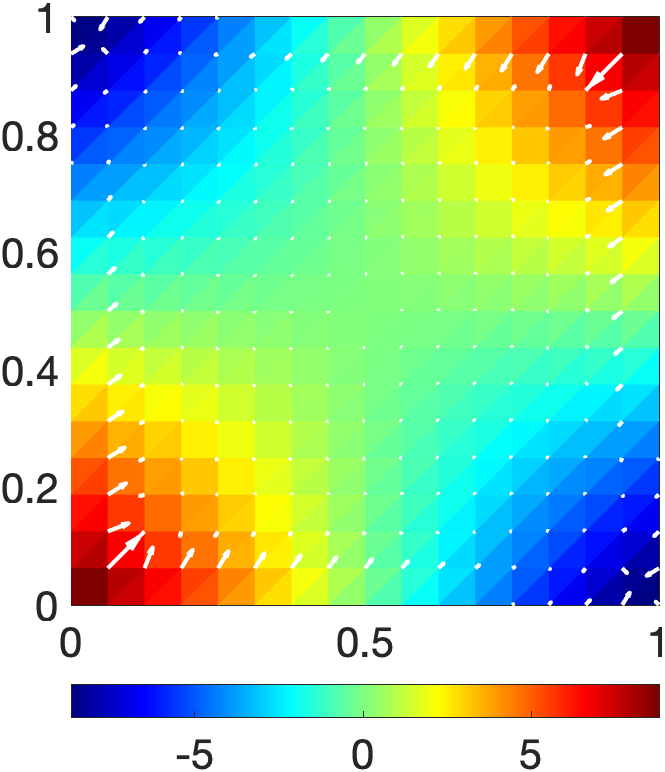}
    \caption{\texttt{ST-EG}: $u_1$, $u_2$, and $p$ with the velocity vector fields, from left to right}
\end{subfigure}
\begin{subfigure}{\linewidth}
    \centering
    \includegraphics[width=3.5cm]{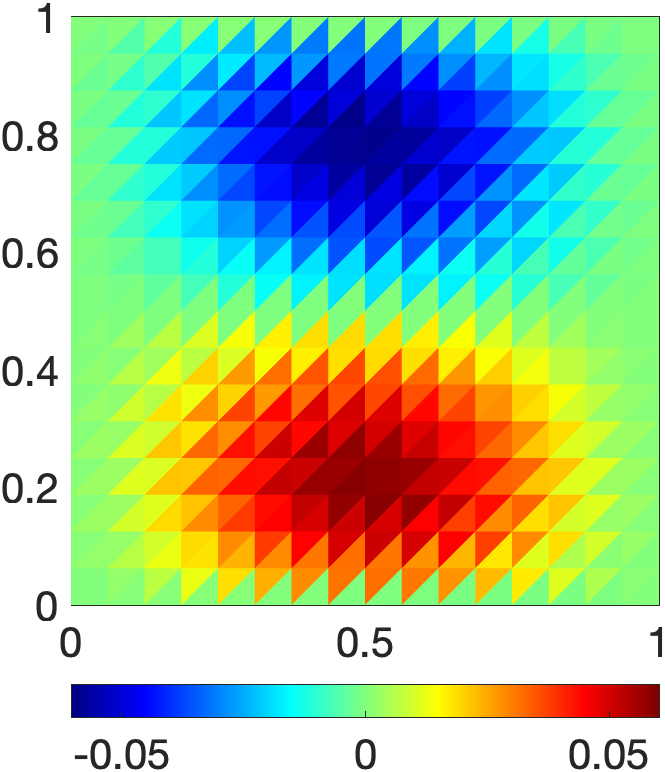}
    \hskip 15pt
    \includegraphics[width=3.5cm]{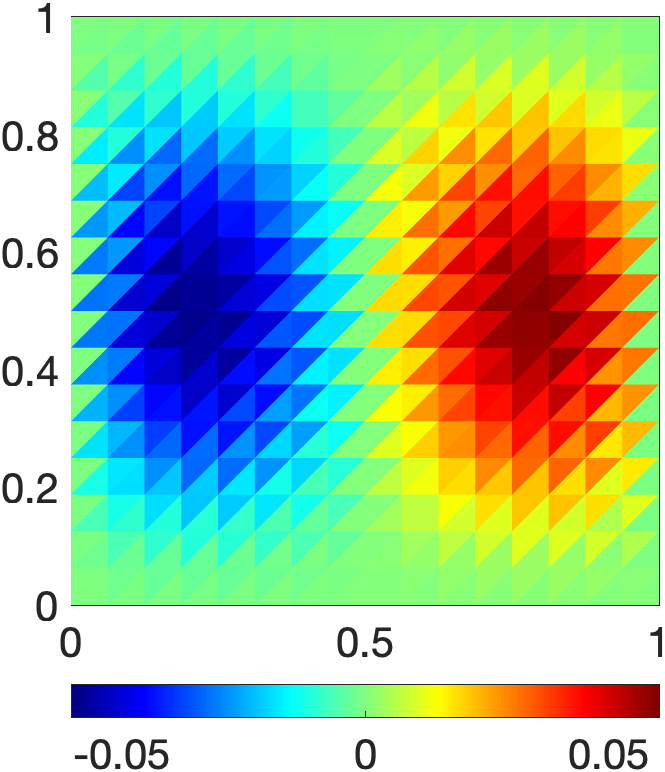}
    \hskip 15pt
    \includegraphics[width=3.5cm]{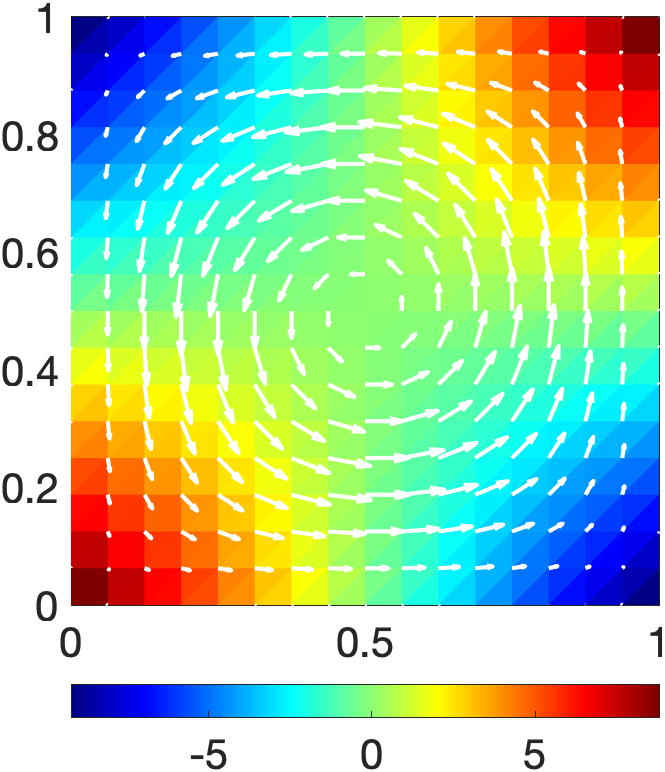}
    \caption{\texttt{PR-EG}: $u_1$, $u_2$, and $p$ with the velocity vector fields, from left to right}
\end{subfigure}
    \caption{Test~\ref{sect:NumTest-1}, Vortex flow: Comparison of the numerical solutions with $h=1/16$ and $\nu=10^{-6}$.}
    \label{test1_numesol}
\end{figure}

\noindent\textbf{Robustness test.}
This test is to verify the pressure-robustness of the \texttt{PR-EG} method. To confirm the error behaviors predicted by \eqref{sys: errboundst} and \eqref{sys: errboundpr}, we solved the example problem with varying $\nu$ values,  from $10^{-2}$ to $10^{-6}$, while fixing the mesh size to $h = 1/32$. 
Figure~\ref{test1_figure_nu} shows the total velocity and auxiliary pressure errors, {\it i.e.,} $\enorm{\bu-\bu_h}$ and $\norm{\mathcal{P}_0p-p_h}_0$.
As expected, the \texttt{ST-EG} method produces the velocity errors inversely proportional to $\nu$ as the second term in the error bound \eqref{eqn: errboundstu} becomes a dominant one as $\nu$ gets smaller. Meanwhile, the auxiliary pressure error remains nearly the same while $\nu$ varies.
On the other hand, the \texttt{PR-EG} method produces nearly the same velocity errors regardless of the $\nu$ values.
Moreover, the auxiliary pressure errors decrease in proportion to $\nu$.
These numerical results are consistent with the error bounds \eqref{sys: errboundst} and \eqref{sys: errboundpr}.
\begin{figure}[!h]
\centering
\begin{subfigure}{0.45\linewidth}
\centering
    \includegraphics[width=5.5cm]{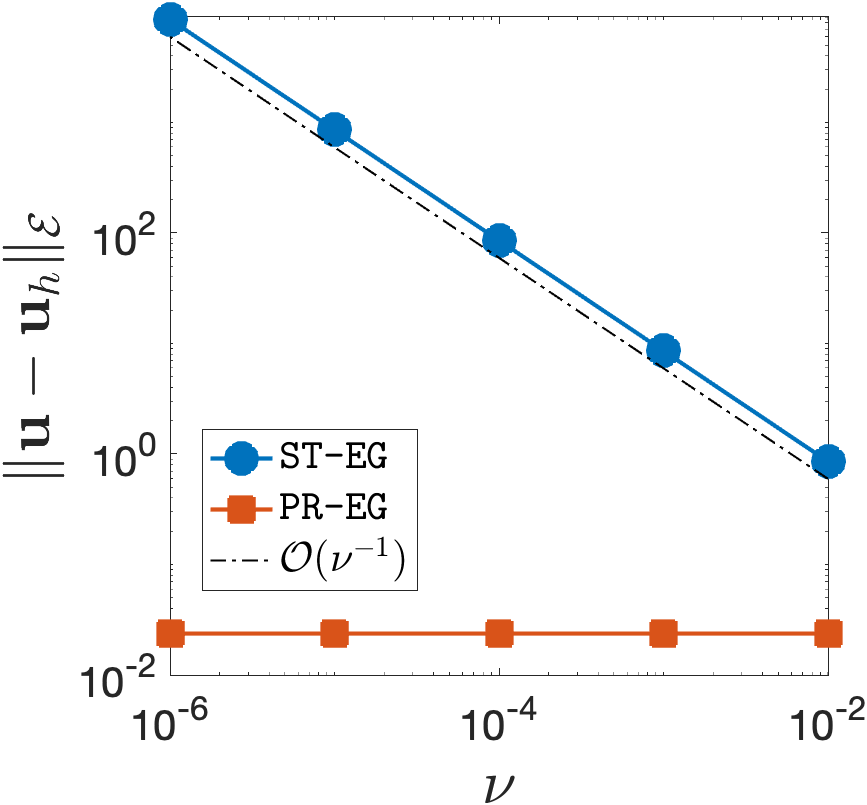}
    \caption{Velocity error vs. viscosity}
    \end{subfigure}
 \begin{subfigure}{0.45\linewidth}
\centering   
    \includegraphics[width=5.5cm]{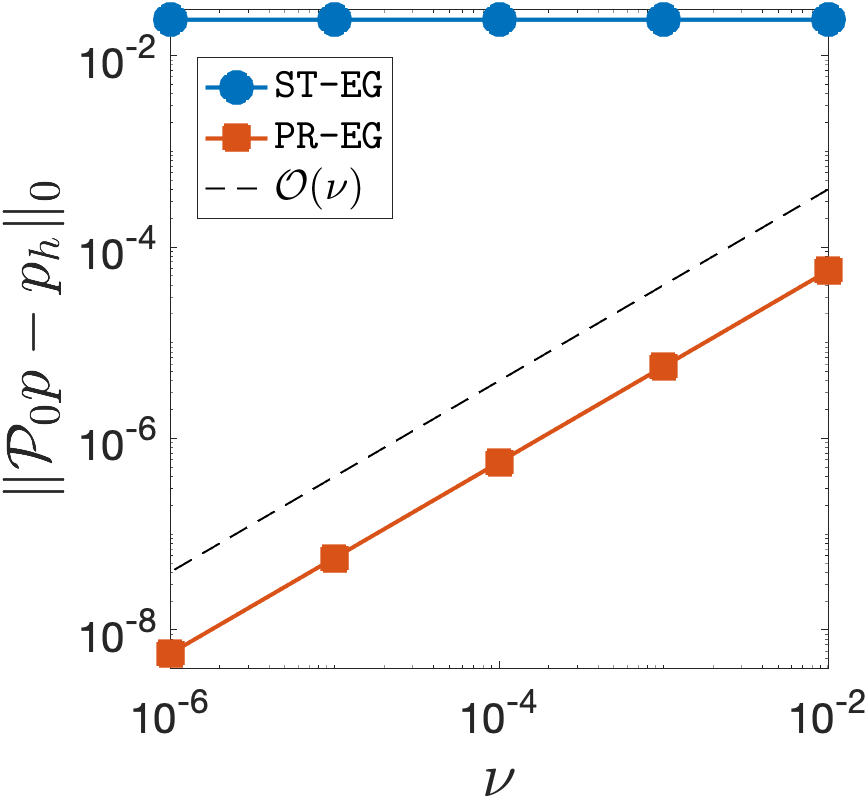}
\caption{Auxiliary pressure error vs. viscosity}
\end{subfigure}
    \caption{Test~\ref{sect:NumTest-1}, Vortex flow: Error profiles of the \texttt{ST-EG} and \texttt{PR-EG} methods with varying $\nu$ values and a fixed mesh size $h=1/32$.}
    \label{test1_figure_nu}
\end{figure}

\noindent\textbf{Performance of the \texttt{CPR-EG} method.}
In this test, we shall validate the error estimates in Theorem~\ref{thm:perturb_EG_error} for the \texttt{CPR-EG} method and compare its performances with the \texttt{PR-EG} method. 
First, we consider the sparsity pattern of their stiffness matrices when generated on the same mesh of size $h = 1/32$.
Figure~\ref{fig:Test1-sparsity} compares the sparsity patterns of the stiffness  matrices corresponding to \texttt{PR-EG}, \texttt{PPR-EG}, and \texttt{CPR-EG} methods. The \texttt{PPR-EG} method produces a matrix with less nonzero entries than the \texttt{PR-EG} method. It also shows the \texttt{CPR-EG} method
yields a much smaller but denser stiffness matrix  than the \texttt{PR-EG} method.
More specifically, by eliminating the DG component of the velocity vector, we can achieve 33\% reduction in the number of DoFs. To compare the accuracy of the two methods, we performed a numerical convergence study with varying $h$ values and a fixed viscosity $\nu = 10^{-6}$, whose results are plotted in Figure~\ref{fig:Test1-EGwithElimination}. As observed in this figure, the errors produced by the \texttt{CPR-EG} method
not only decrease at the optimal order of $\mathcal{O}(h)$, they are also nearly identical to those produced by the \texttt{PR-EG} method.
\begin{figure}[!htb]
\centering
    \begin{subfigure}{0.3\linewidth}
        \centering
\includegraphics[width=\textwidth]{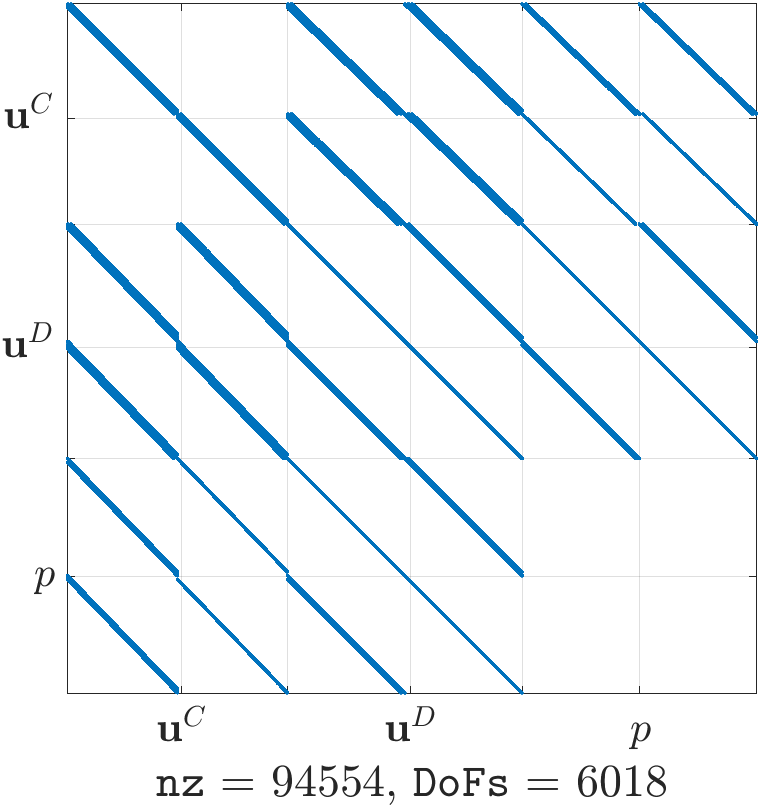}
\caption{\texttt{PR-EG}}
\end{subfigure}
\hskip 10pt
    \begin{subfigure}{0.3\linewidth}
        \centering
\includegraphics[width=\textwidth]{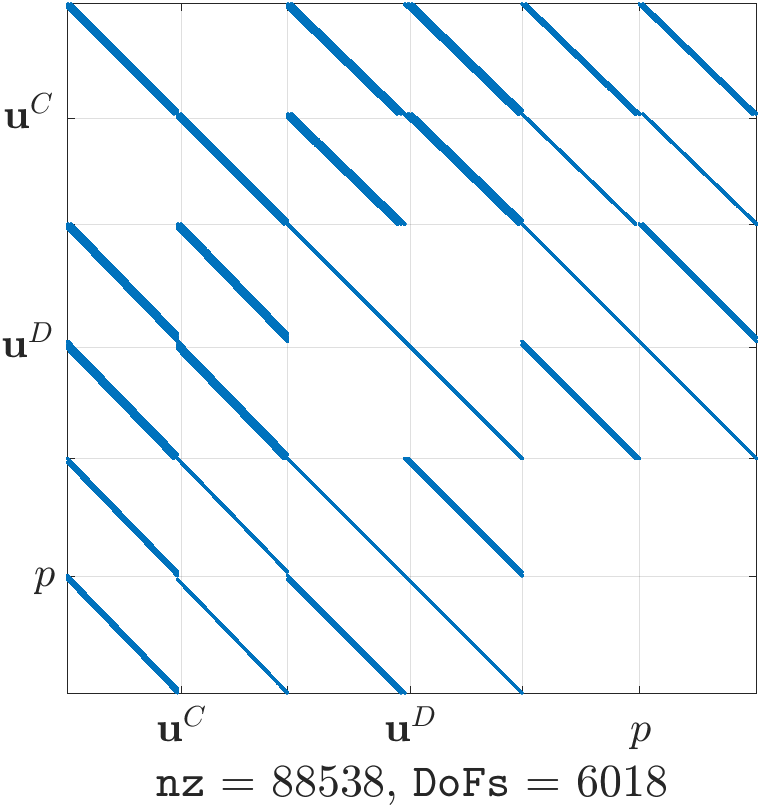}
\caption{\texttt{PPR-EG}}
\end{subfigure}
\hskip 10pt
\begin{subfigure}{0.3\linewidth}
        \centering
        \includegraphics[width=\textwidth]{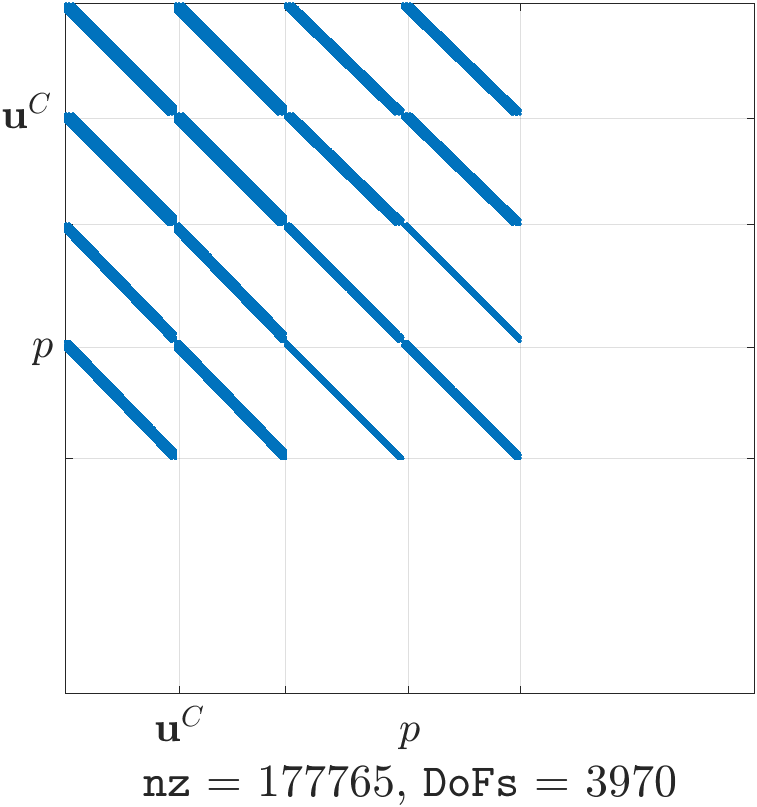}\\
\caption{\texttt{CPR-EG}}
\end{subfigure}
\caption{Test~\ref{sect:NumTest-1}, Vortex flow: Comparison of the sparsity patterns of the stiffness matrices on a mesh with $h = 1/32$. \texttt{nz} denotes the number of nonzeros.}\label{fig:Test1-sparsity}
\end{figure}

\begin{figure}[!h]
\centering
\includegraphics[width=.35\textwidth]{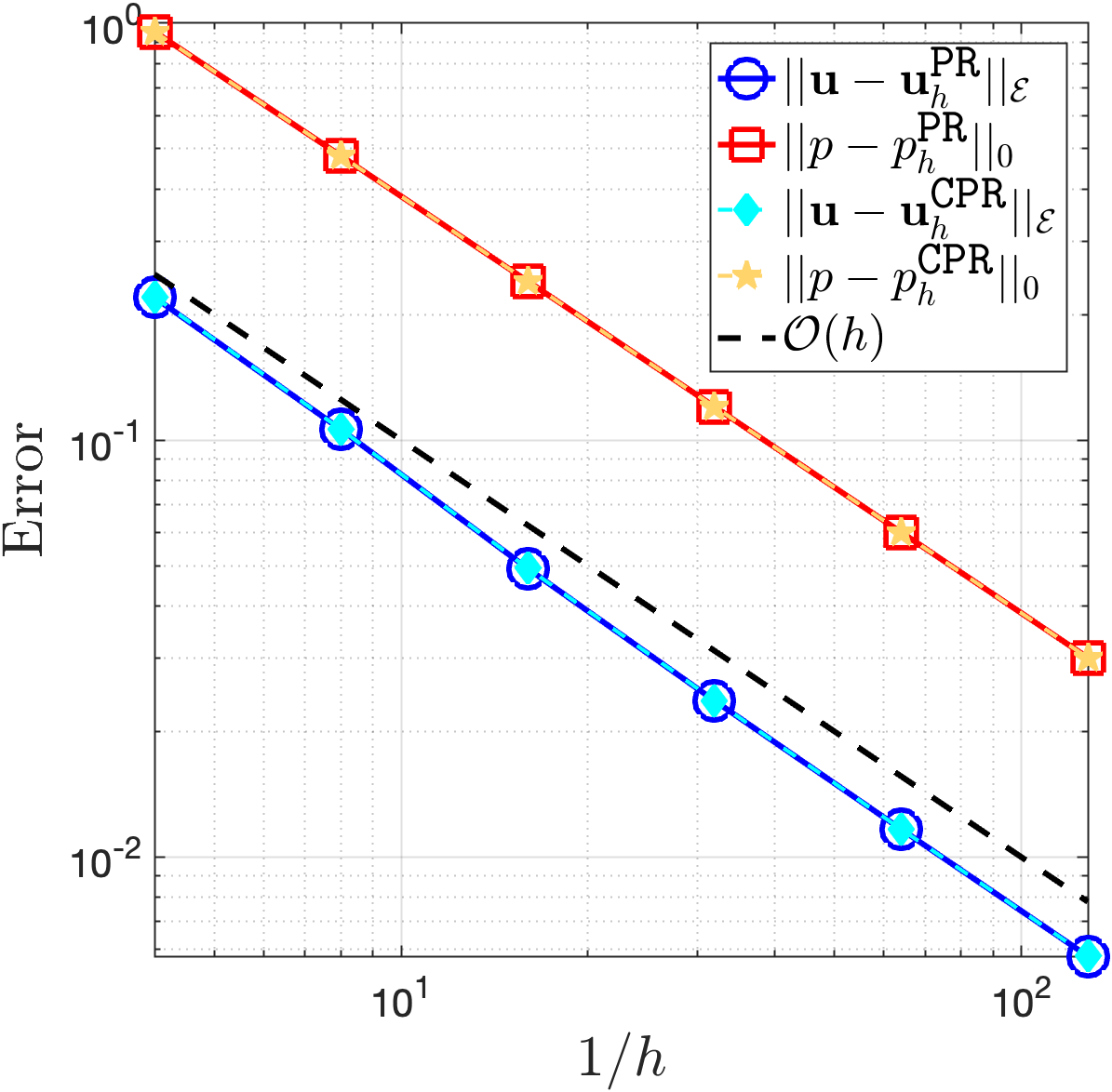}
\caption{Test~\ref{sect:NumTest-1}, Vortex flow: Comparison of the errors from the \texttt{PR-EG} and \texttt{CPR-EG} methods. The viscosity is fixed to $\nu = 10^{-6}$. 
}\label{fig:Test1-EGwithElimination}
\end{figure}

\subsection{Three dimensional examples}
We now turn our attention to some numerical examples in three dimensions. In {all three-dimensional tests}, we used the penalty parameter {$\rho = 2$}.

\subsubsection{Test 2: 3D flow in a unit cube}
\label{sect:NumTest-5}
In this example, we consider a 3D flow in a unit cube $\Omega=(0,1)^3$. The velocity field and pressure are chosen as
\begin{equation*}
    \bu 
    = \left(\begin{array}{c}
    \sin(\pi x)\cos(\pi y) - \sin(\pi x)\cos(\pi z) \\
    \sin(\pi y)\cos(\pi z) - \sin(\pi y)\cos(\pi x) \\
    \sin(\pi z)\cos(\pi x) - \sin(\pi z)\cos(\pi y)
    \end{array}\right),\quad
    p = \sin(\pi x)\sin(\pi y)\sin(\pi z).
\end{equation*}

\noindent\textbf{Accuracy test.}
With this example problem, we performed a mesh refinement study for the \texttt{ST-EG} and \texttt{PR-EG} methods with a fixed viscosity $\nu = 10^{-6}$. The results, summarized in Table~\ref{test5_h}, show very similar convergence behaviors to those of the two-dimensional results.
Though the velocity errors generated by the \texttt{ST-EG} method appear to decrease super-linearly, 
the magnitudes of the numerical velocity solutions are extremely larger than those of the exact solution and the solution of the \texttt{PR-EG} method. See the streamlines of the velocity solutions of the \texttt{ST-EG} and \texttt{PR-EG} methods in Figure~\ref{test5_numesol}.

\setlength\tabcolsep{4.2pt}
\begin{table}[!h]
    \centering
    \begin{tabular}{|c||c|c|c|c||c|c|c|c|}
    \hline
        &  \multicolumn{4}{c||}{\texttt{ST-EG}
        } &  \multicolumn{4}{c|}{ \texttt{PR-EG}
        }\\
    \cline{2-9}   
       $h$   & {\small $\enorm{\bu-\buh^{\texttt{ST}}}$} & {\small Rate} & {\small$\|p-p_h^{\texttt{ST}}\|_0$} & {\small Rate} & {\small$\|\bu-\bu_h^{\texttt{PR}}\|_{\mathcal{E}}$} & {\small Rate} & {\small$\|p-p_h^{\texttt{PR}}\|_0$} & {\small Rate} \\
       \hline
      $1/4$  & 8.785e+3 & - & 1.058e-1 & - & 3.732e+0 & - & 9.581e-2 & - \\ 
       \hline
       $1/8$   & 3.429e+3 & 1.36 & 5.144e-2 & 1.04 & 1.827e+0 & 1.03 & 4.879e-2 & 0.97 \\
       \hline
       $1/16$  & 1.239e+3 & 1.47 & 2.514e-2 & 1.03 & 9.048e-1 & 1.01 & 2.451e-2 & 0.99 \\
       \hline
       $1/32$  & 4.346e+2 & 1.51 & 1.241e-2 & 1.02 & 4.501e-1 & 1.01 & 1.227e-2 & 1.00 \\
       \hline
       $1/64$   & 1.521e+2 & 1.51 & 6.171e-3 & 1.01 &  2.244e-1 & 1.00 & 6.135e-3 & 1.00\\
       \hline
    \end{tabular}
    \caption{Test~\ref{sect:NumTest-5}, 3D flow in a unit cube: A mesh refinement study for the \texttt{ST-EG} and \texttt{PR-EG} methods on uniform meshes with varying $h$ and a fixed viscosity $\nu = 10^{-6}$.}
    \label{test5_h}
\end{table}

\begin{figure}[!htb]
\centering
\begin{subfigure}{.45\linewidth}
\centering
    \includegraphics[width=.75\textwidth]{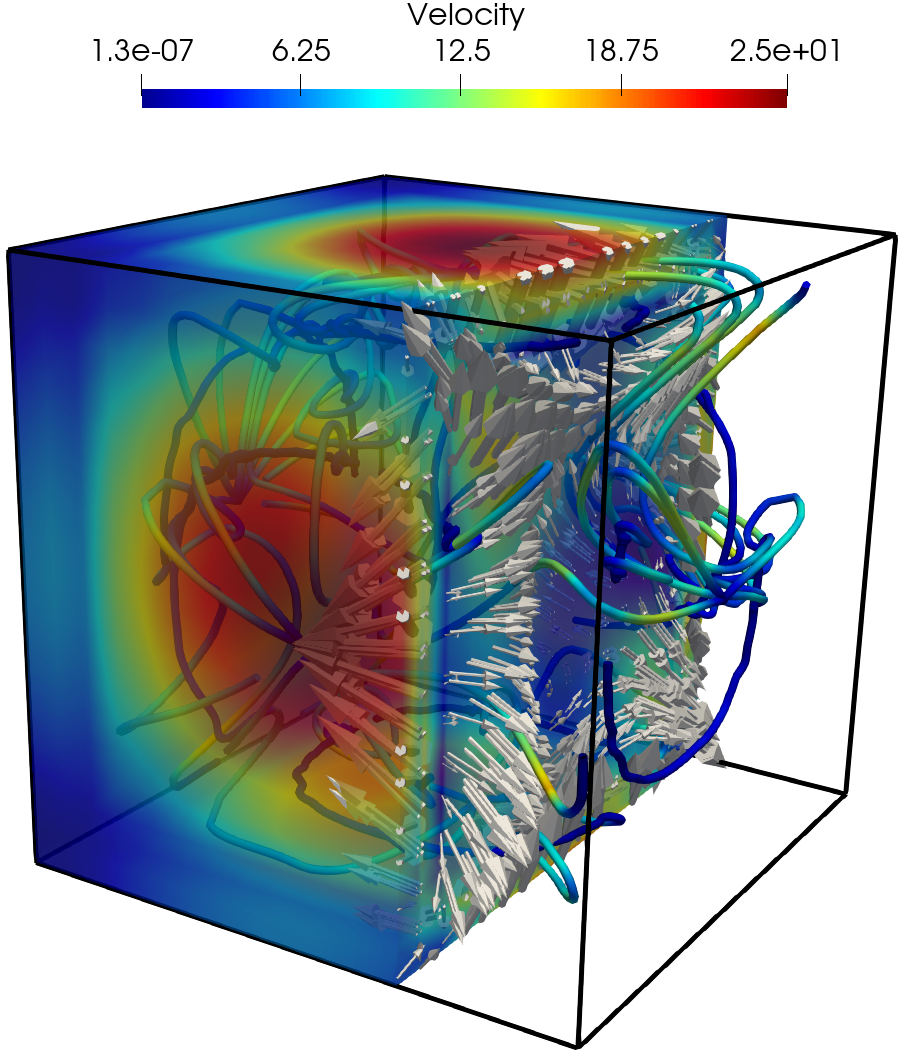}
    \caption{\texttt{ST-EG}}
    \hskip 20pt
    \end{subfigure}
    \begin{subfigure}{.45\linewidth}
   \centering
   \includegraphics[width=.7\textwidth]{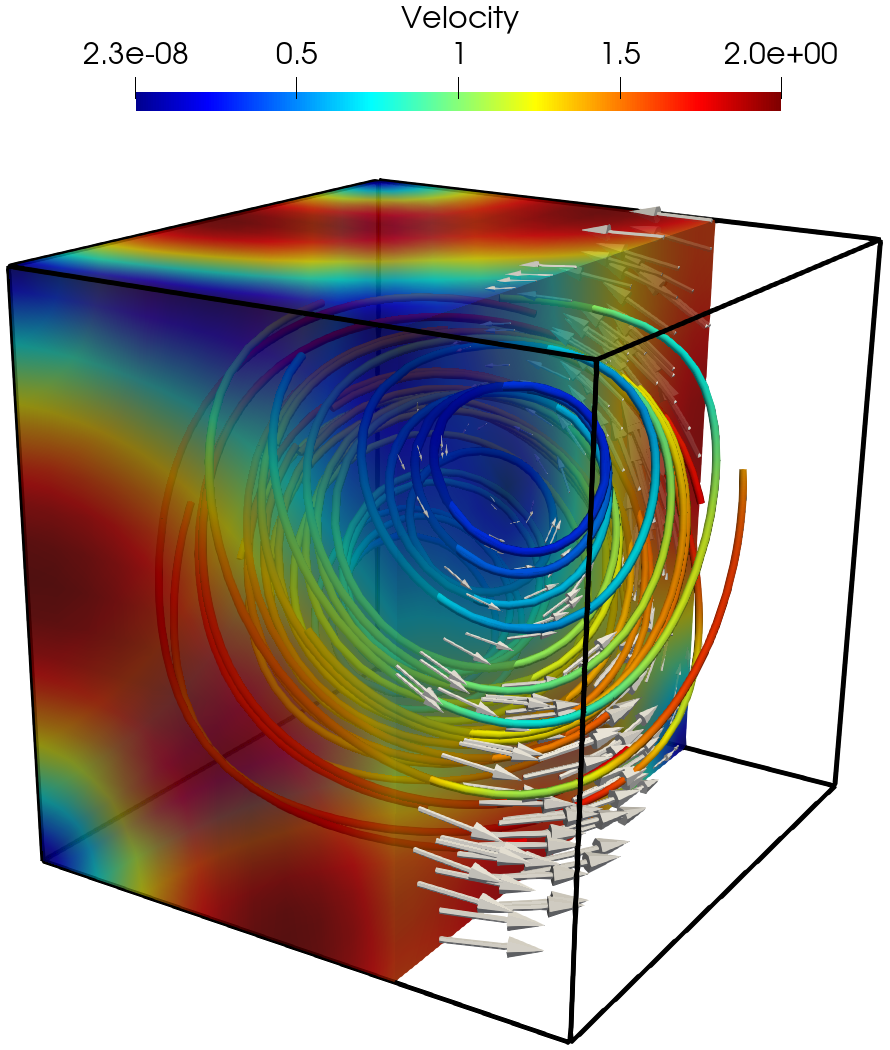}
   \caption{\texttt{PR-EG}}
   \end{subfigure}
    \caption{Test~\ref{sect:NumTest-5}, 3D flow in a unit cube: Streamlines of the numerical velocity when $h=1/16$ and $\nu=10^{-6}$.}
    \label{test5_numesol}
\end{figure}

\noindent\textbf{Robustness test.}
We consider the pattern of the error behaviors obtained by the two EG methods when $\nu$ varies and the mesh size is fixed to $h = 1/16$. The results of our tests are illustrated in Figure~\ref{test5_errors}.
As in the two-dimensional example, the velocity and (auxiliary) pressure errors for the \texttt{PR-EG} method follow the patterns predicted by the error estimates in \eqref{sys: errboundpr}. 
On the other hand, the velocity and auxiliary pressure errors for the  \texttt{ST-EG} method behave like  $\mathcal{O}(\nu^{-1})$ and $\mathcal{O}(1)$ only after $\nu$ becomes sufficiently small ($\nu \leq 10^{-3}$). The earlier deviation from these patterns when $\nu$ is relatively large ($10^{-3} \leq \nu \leq 10^{-2}$) is due to the smallness of $\norm{p}_1$ compared to $\norm{\bu}_2$, unlike in Test 1.
\begin{figure}[!htb]
\centering
\begin{subfigure}{0.45\linewidth}
    \centering
    \includegraphics[width=5.5cm]{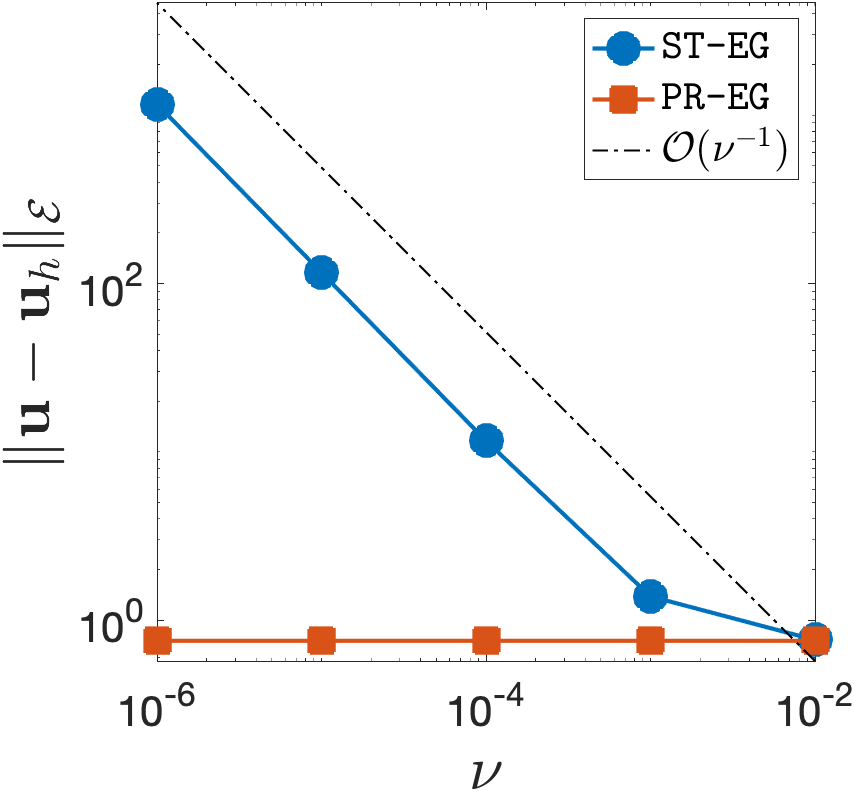}
    \caption{Velocity error vs. viscosity}
    \end{subfigure}
    \begin{subfigure}{0.45\linewidth}
    \centering
    \includegraphics[width=5.5cm]{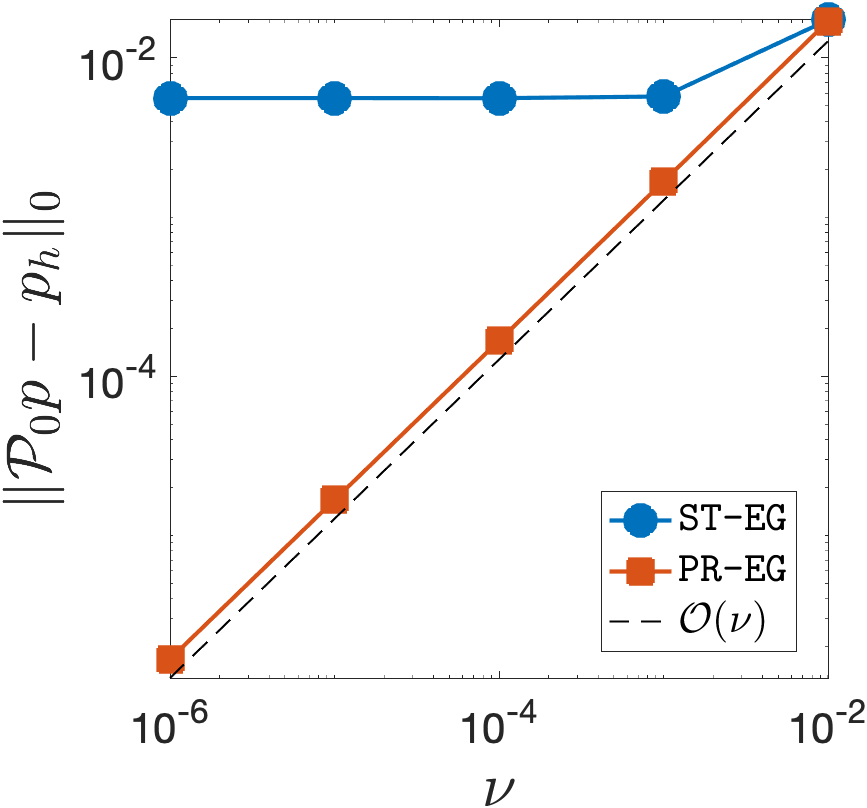}
        \caption{Auxiliary pressure error vs. viscosity}
    \end{subfigure}
    \caption{Test~\ref{sect:NumTest-5}, 3D flow in a unit cube: Error profiles of the \texttt{ST-EG} and \texttt{PR-EG} methods with varying $\nu$ and a fixed mesh size $h=1/16$.}
    \label{test5_errors}
\end{figure}

\noindent\textbf{Performance of the \texttt{CPR-EG} method.} 
Next, we shall demonstrate the savings in the computational cost when we use the \texttt{CPR-EG} method.
See Figure~\ref{fig:Test6-sparsity} for the sparsity patterns for the stiffness matrices generated by the \texttt{PR-EG}, \texttt{PPR-EG}, and \texttt{CPR-EG} methods on the same mesh with $h = 1/16$. 
In this case, the \texttt{PPR-EG} method generates a stiffness matrix with fewer nonzero entries compared to the \texttt{PR-EG} method.
On the other hand, the \texttt{CPR-EG} method requires approximately 38\% fewer DoFs than the other two methods. However, its resulting stiffness matrix is denser than those resulting from the other two methods.
Besides, we also compared the errors generated by the \texttt{PR-EG} and \texttt{PPR-EG} methods. They are nearly the same. But, the numerical data is not provided here for the sake of brevity. 

\begin{figure}[!htb]
\centering
\begin{subfigure}{0.3\linewidth}
\centering
\includegraphics[width=\textwidth]{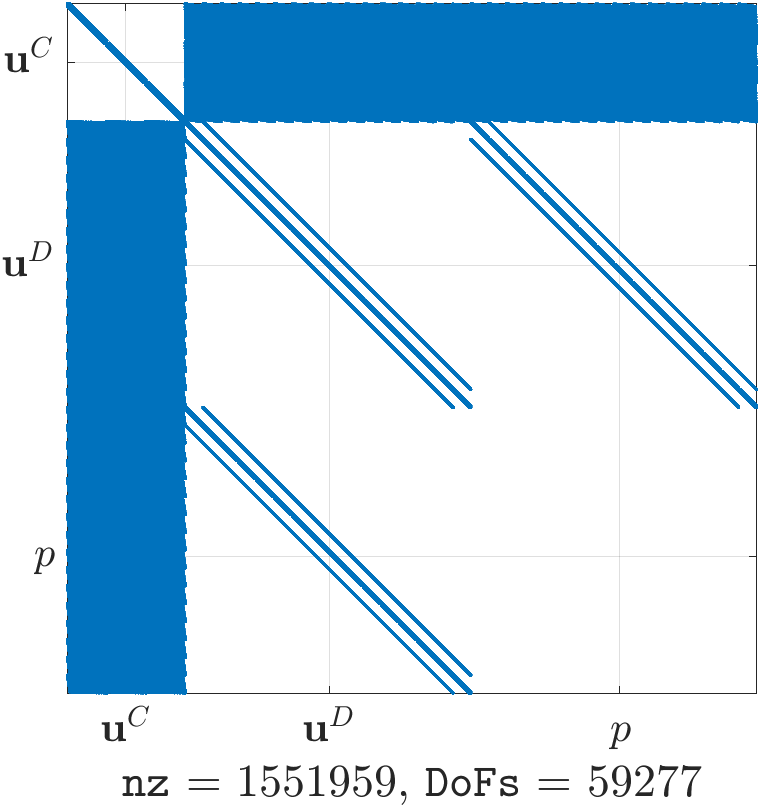}
\caption{\texttt{PR-EG}}
\end{subfigure}
\hskip 10pt
\begin{subfigure}{0.3\linewidth}
\centering
\includegraphics[width=\textwidth]{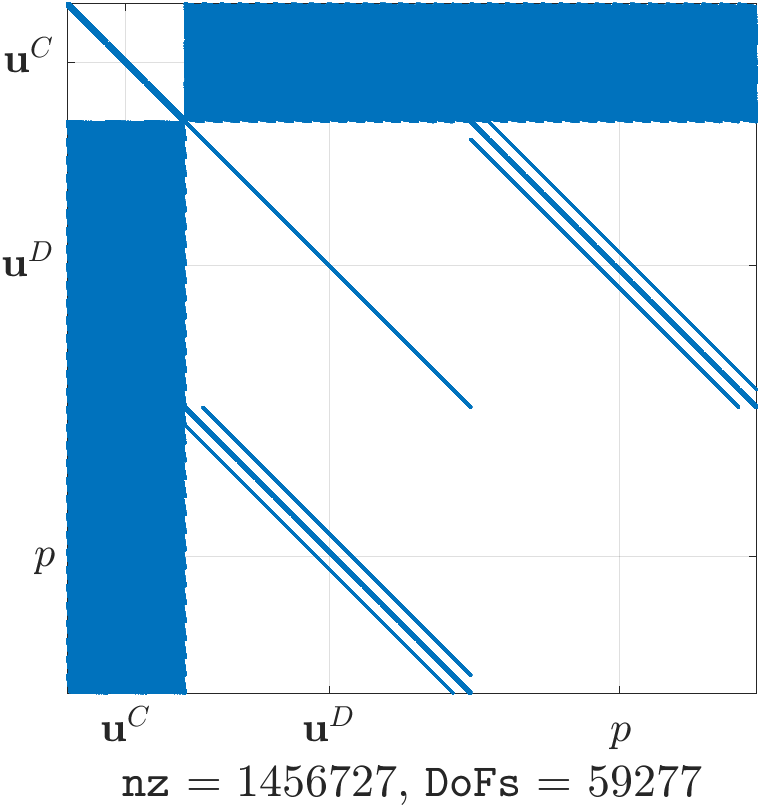}
\caption{\texttt{PPR-EG}}
\end{subfigure}
\hskip 10pt
\begin{subfigure}{0.3\linewidth}
\centering
\includegraphics[width=\textwidth]{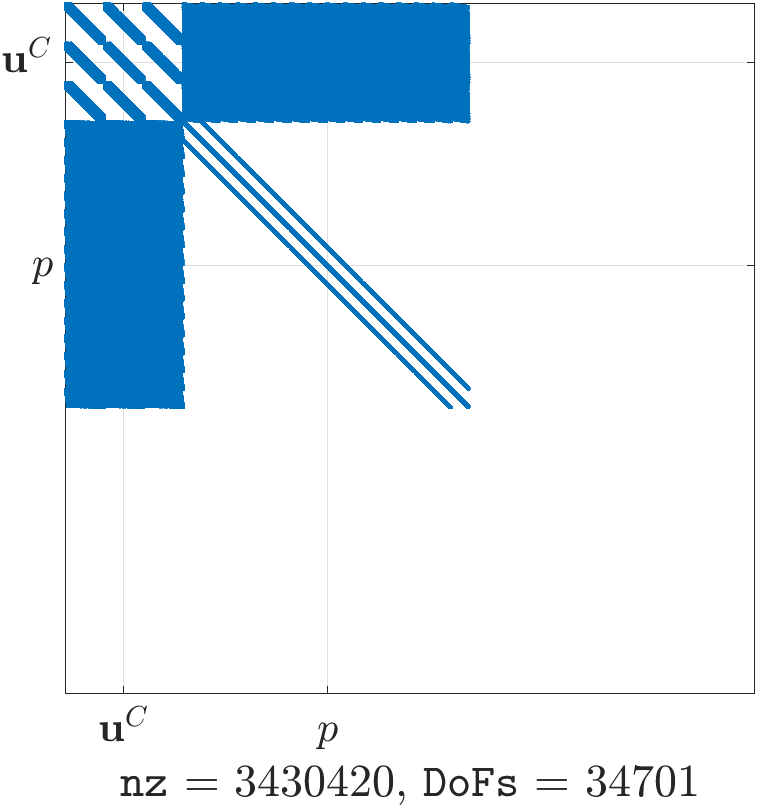}
\caption{\texttt{CPR-EG}}
\end{subfigure}
\caption{Test~\ref{sect:NumTest-5}, 3D flow in a unit cube: Comparison of the sparsity patterns of the stiffness matrices on a mesh with $h = 1/16$.}\label{fig:Test6-sparsity}
\end{figure}

\noindent\textbf{Performance of block preconditioners.}
We use the proposed block preconditioners to solve the corresponding linear systems and show their robustness with respect to the viscosity $\nu$. The required iteration numbers are reported in Table~\ref{test3_iter_nu} for mesh size $h = 1/4$ and penalty parameter $\rho = 2$. The exact and inexact block preconditioners are applied to the preconditioned GMRES method. For the inexact block preconditioners, we use an algebraic multigrid (AMG) preconditioned GMRES method to approximately invert the diagonal block with tolerance $10^{-6}$. This inner block solver usually took $5$-$7$ iterations in all our experiments. Therefore, we only report the outer GMRES iteration numbers. The results in Table~\ref{test3_iter_nu} show the robustness of our block preconditioners with respect to the viscosity $\nu$. This is further confirmed in Table~\ref{test3_kappa_nu}, where the condition numbers of the preconditioned stiffness matrices are reported. In Table~\ref{test3_kappa_nu}, we only show the results of {the application of the} block diagonal preconditioner since, in this case, the preconditioner is symmetric positive definite and the condition number is defined via the eigenvalues, i.e.,
$\kappa(\mathcal{B}_D \mathcal{A}) = \frac{\max |\lambda(\mathcal{B}_D \mathcal{A})| }{ \min |\lambda(\mathcal{B}_D \mathcal{A})|}$,  $\kappa(\mathcal{B}^D_D \mathcal{A}^D) = \frac{\max |\lambda(\mathcal{B}^D_D \mathcal{A}^D)| }{ \min |\lambda(\mathcal{B}^D_D \mathcal{A}^D)|}$, and $\kappa(\mathcal{B}^E_D \mathcal{A}^E) = \frac{\max |\lambda(\mathcal{B}^E_D \mathcal{A}^E)| }{ \min |\lambda(\mathcal{B}^E_D \mathcal{A}^E)|}$. 
As we can see, the condition number remains the same as $\nu$ decreases, which demonstrates the robustness of the proposed block preconditioners. The small variation in the number of iterations is mainly due to the outer GMRES method since the matrices $\mathcal{A}$, $\mathcal{A}^D$, and $\mathcal{A}^E$ are ill-conditioned and may affect the orthogonalization procedure used in Krylov iterative methods. In addition, the numerical performance for the \texttt{CPR-EG} method is the best among all three pressure-robust methods for this test in terms of the number of iterations. 

\begin{table}[H]
	\centering
	\begin{tabular}{|c||c|c|c||c|c|c||c|c|c|}
		\hline
		& \multicolumn{9}{c|}{Exact Solver} \\ \hline
		& \multicolumn{3}{c||}{
			\texttt{PR-EG}}&
		\multicolumn{3}{c||}{
			\texttt{PPR-EG}
		}
		& \multicolumn{3}{c|}{\texttt{CPR-EG}}\\
		\cline{2-10}
		$\nu$ & $\mathcal{B}_D$ & $\mathcal{B}_L$ & $\mathcal{B}_U$ 
		& $\mathcal{B}^D_D$ & $\mathcal{B}^D_L$ & $\mathcal{B}^D_U$ 
		& $\mathcal{B}^E_D$ & $\mathcal{B}^E_L$ & $\mathcal{B}^E_U$ \\ \hline
		1  &43  &23  &21    &62  &34  &32    &30  &20 &18\\ \hline
		$10^{-2}$ &61  &33  &33    &87  &49  &49    &45  &27 &28\\ \hline
		$10^{-4}$ &71  &39  &39    &89  &52  &52    &39  &25 &25\\ \hline
		$10^{-6}$ &72  &40  &40    &91  &55  &55    &36  &25 &25\\ \hline
		& \multicolumn{9}{c|}{Inexact Solver} \\ \hline
		& \multicolumn{3}{c||}{
			\texttt{PR-EG}
		}&
		\multicolumn{3}{c||}{
			\texttt{PPR-EG}}
		& \multicolumn{3}{c|}{\texttt{CPR-EG}}
		\\
		\cline{2-10}
		$\nu$ & $\mathcal{M}_D$ & $\mathcal{M}_L$ & $\mathcal{M}_U$
		& $\mathcal{M}^D_D$ & $\mathcal{M}^D_L$ & $\mathcal{M}^D_U$
		& $\mathcal{M}^E_D$ & $\mathcal{M}^E_L$ & $\mathcal{M}^E_U$ \\ \hline
		1  &43 &27 &25    &63 &37 &34   &34 &21 &19 \\ \hline
		$10^{-2}$  &61 &36 &35    &93 &56 &56   &53 &30 &31 \\ \hline
		$10^{-4}$  &75 &45 &45 	 &96 &61 &61   &47 &29 &28 \\ \hline
		$10^{-6}$  &83 &47 &47 	 &111&64 &64   &40 &28 &27\\ \hline
	\end{tabular}
	\caption{Test~\ref{sect:NumTest-5}, 3D flow in a unit cube:  Iteration counts for the block preconditioners when $\nu$ varies on mesh with $h = 1/4$.}
	\label{test3_iter_nu}
\end{table}

\begin{table}[H]
	\centering
	\begin{tabular}{|c||c||c||c|}
		\hline
		& \multicolumn{1}{c||}{
			\texttt{PR-EG}
		}&
		\multicolumn{1}{c||}{
			\texttt{PPR-EG}
		}
		& \multicolumn{1}{c|}{\texttt{CPR-EG}}\\
		\cline{2-4}
		$\nu$ & $\kappa(\mathcal{B}_D\mathcal{A})$ 
		& $ \kappa(\mathcal{B}^D_D \mathcal{A}^D)$ 
		& $ \kappa(\mathcal{B}^E_D \mathcal{A}^E)$ \\ \hline
		1  & 41.267  & 99.563 &  62.445 \\ \hline
		$10^{-2}$ & 41.267  & 99.563 &  62.445\\ \hline
		$10^{-4}$ & 41.267  & 99.563 &  62.445\\ \hline
		$10^{-6}$ & 41.267  & 99.563 &  62.445 \\ \hline
	\end{tabular}
	\caption{Test~\ref{sect:NumTest-5}, 3D flow in a unit cube: Condition number of 
	the preconditioned stiffness matrices  with the exact block preconditioners when $\nu$ varies on mesh with $h = 1/4$.}
	\label{test3_kappa_nu}
\end{table}

\subsubsection{Test 3: 3D vortex flow in L-shaped cylinder}
\label{sect:NumTest-6}
Let us consider an L-shaped cylinder defined by $\Omega=(0,1)^3\setminus (0.5,1)\times(0.5,1)\times(0,1)$.
In this domain, the exact velocity field and pressure are chosen as
\begin{equation*}
    \bu 
    = \frac{1}{x^2+y^2+1}\left(\begin{array}{c}
    -y \\
    x \\
    0
    \end{array}\right),\quad
    p = |2x-1|.
\end{equation*}
Note that the velocity is a rotational vector field whose center is $(x,y) = (0,0)$, and
the pressure contains discontinuity in its derivatives along the vertical line $x=0.5$.

\noindent\textbf{Accuracy and robustness test.}
We performed a mesh refinement study with $\nu = 10^{-6}$ and also studied the error behaviors on a fixed mesh while $\nu$ varies for both the 
\texttt{ST-EG} and \texttt{PR-EG} methods. The error behaviors are very similar to those of the previous examples reported in, for example, Table~\ref{test5_h} and Figure~\ref{test5_errors}. Therefore, we omit the results here. 
However, we present the streamlines of the numerical velocity solutions in Figure~\ref{test6_numesol}. The side-by-side comparison of the velocity streamlines generated by the \texttt{ST-EG} and \texttt{PR-EG} methods clearly shows that the \texttt{PR-EG} method well captures the characteristic of the rotational vector field while the \texttt{ST-EG} method does not. 
\begin{figure}[!htb]
\centering
\begin{subfigure}{0.45\linewidth}
\centering
    \includegraphics[width=.75\textwidth]{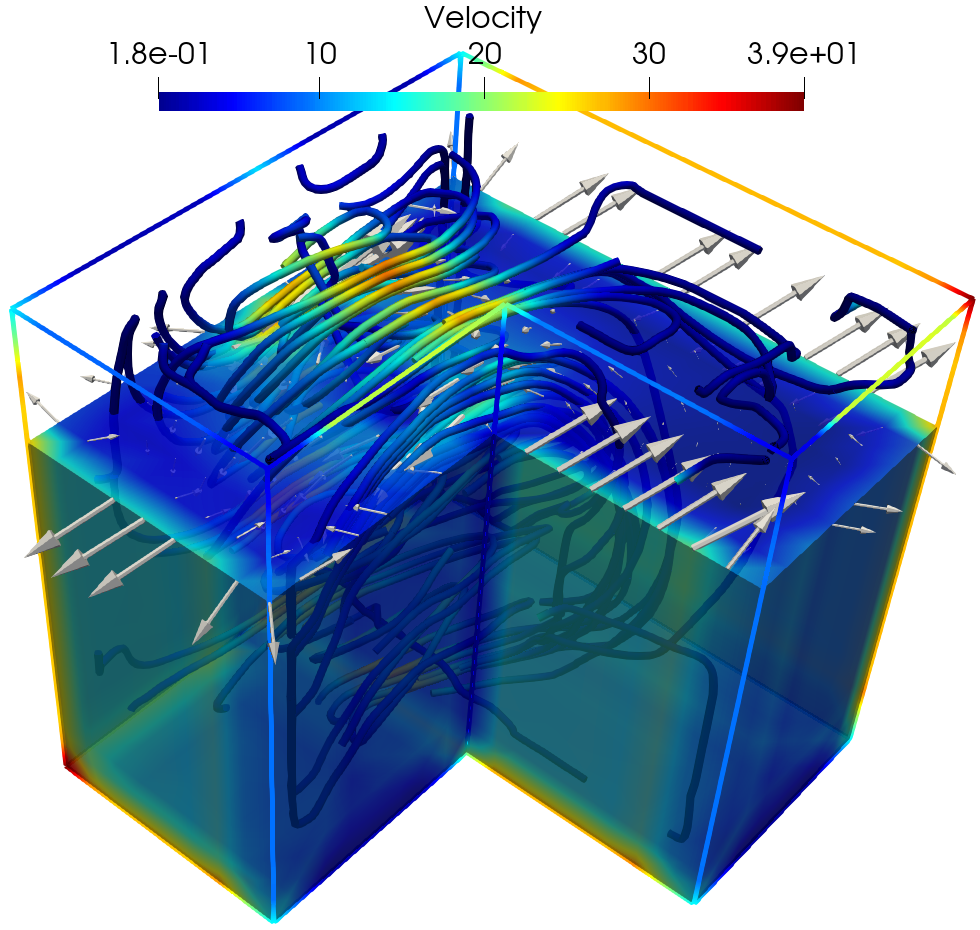} 
    \caption{\texttt{ST-EG}}
    \end{subfigure}
    \hskip 10pt
    \begin{subfigure}{0.45\linewidth}
    \centering
    \includegraphics[width=.75\textwidth]{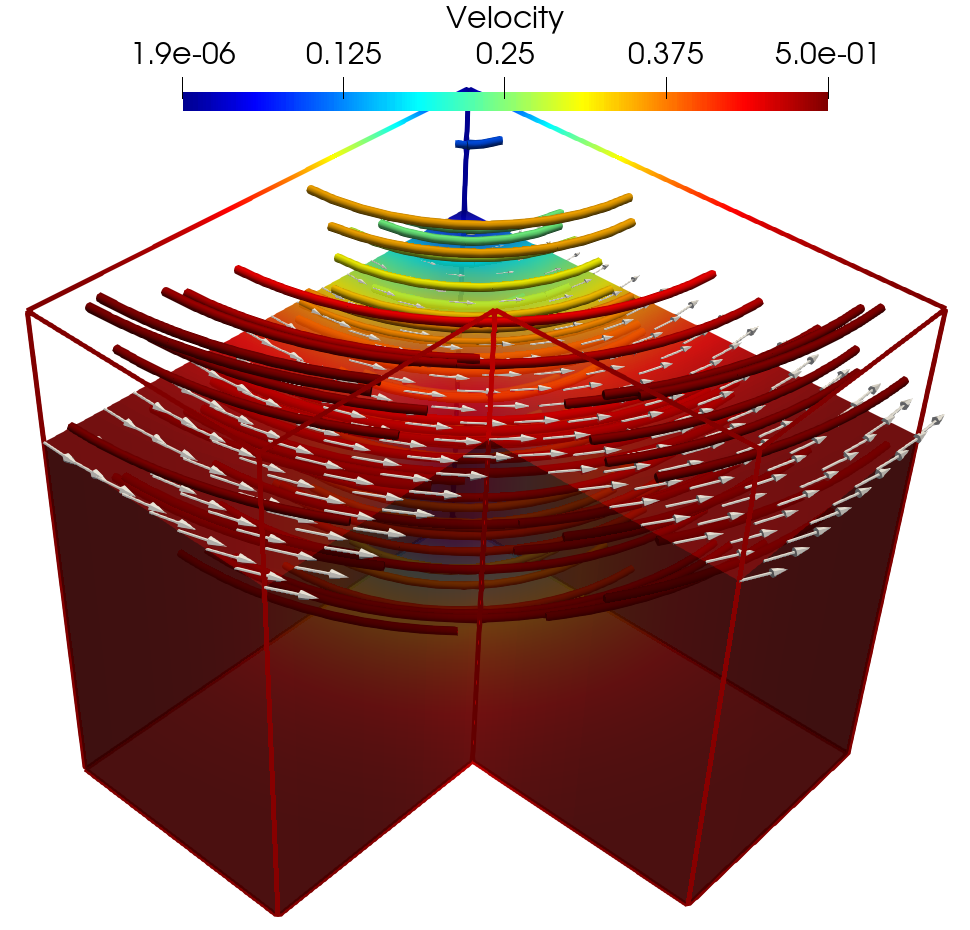}\\ 
    \caption{\texttt{PR-EG}}
    \end{subfigure}
    \caption{Test~\ref{sect:NumTest-6}, 3D Vortex flow in an L-shaped cylinder: Streamlines of the numerical velocity solutions  with $h=1/16$ and $\nu=10^{-6}$.}
    \label{test6_numesol}
\end{figure}

\noindent\textbf{Performance of block preconditioners.}
We again test the performance of the block preconditioners for this 3D L-shaped domain example.
The block preconditioners were implemented in the same way as in Test~\ref{sect:NumTest-5}. The inner GMRES method for solving the diagonal blocks usually took $6$-$9$ iterations in all cases, hence the results are omitted here, but the numbers of iterations of the outer GMRES method are shown in Table~\ref{test4_iter_nu}. When the block preconditioners are used for the \texttt{PR-EG} and \texttt{PPR-EG} methods, the numbers of iterations increase moderately as the viscosity $\nu$ decreases. This is mainly caused by the outer GMRES method since the condition numbers of the preconditioned stiffness matrices remain constant when $\nu$ decreases, as shown in Table~\ref{test4_kappa_nu}. Interestingly, the \texttt{CPR-EG} method performs the best in terms of the number of iterations. However, its condition number is slightly larger than that of the \texttt{PR-EG} method, as shown in Table~\ref{test4_kappa_nu}. Further studies are needed to better understand those observations  to design parameter-robust preconditioners that can be applied in practice. But this is out of the scope of this work and will be part of our future research. 

\begin{table}[!htb]
	\centering
	\begin{tabular}{|c||c|c|c||c|c|c||c|c|c|}
		\hline
		& \multicolumn{9}{c|}{Exact Solver} \\ \hline
		& \multicolumn{3}{c||}{
			\texttt{PR-EG}
		}&
		\multicolumn{3}{c||}{
			\texttt{PPR-EG}
		}
		& \multicolumn{3}{c|}{\texttt{CPR-EG}}\\
		\cline{2-10}
		$\nu$ & $\mathcal{B}_D$ & $\mathcal{B}_L$ & $\mathcal{B}_U$ 
		& $\mathcal{B}^D_D$ & $\mathcal{B}^D_L$ & $\mathcal{B}^D_U$ 
		& $\mathcal{B}^E_D$ & $\mathcal{B}^E_L$ & $\mathcal{B}^E_U$ \\ \hline
		1  &98 &50 &49    &116&62 &59    &64 &33 &31\\ \hline
		$10^{-2}$ &161&85 &85    &207&113&113   &102&56 &56\\ \hline
		$10^{-4}$ &189&101&101   &252&136&136   &105&57 &57\\ \hline
		$10^{-6}$ &217&120&120   &-- &161&161   &105&57 &57\\ \hline
		& \multicolumn{9}{c|}{Inexact Solver} \\ \hline
		& \multicolumn{3}{c||}{
			\texttt{PR-EG}
		}&
		\multicolumn{3}{c||}{
			\texttt{PPR-EG}}
		& \multicolumn{3}{c|}{\texttt{CPR-EG}}
		\\
		\cline{2-10}
		$\nu$ & $\mathcal{M}_D$ & $\mathcal{M}_L$ & $\mathcal{M}_U$
		& $\mathcal{M}^D_D$ & $\mathcal{M}^D_L$ & $\mathcal{M}^D_U$
		& $\mathcal{M}^E_D$ & $\mathcal{M}^E_L$ & $\mathcal{M}^E_U$ \\ \hline
		1   &98 &54 &55    &116&67 &65   &64 &36 &34 \\ \hline
		$10^{-2}$  &161&92 &92    &207&121&121  &102&61 &61 \\ \hline
		$10^{-4}$  &189&112&112 	 &251&147&147  &105&63 &62 \\ \hline
		$10^{-6}$  &211&127&127 	 &279&170&168  &105&63 &62\\ \hline
	\end{tabular}
	\caption{Test~\ref{sect:NumTest-6}, 3D flow in L-shaped domain:  Iteration counts for the block preconditioners when $\nu$ varies on mesh with $h = 1/4$.}
	\label{test4_iter_nu}
\end{table}

\begin{table}[!htb]
	\centering
	\begin{tabular}{|c||c||c||c|}
		\hline
		& \multicolumn{1}{c||}{
			\texttt{PR-EG}
		}&
		\multicolumn{1}{c||}{
			\texttt{PPR-EG}
		}
		& \multicolumn{1}{c|}{\texttt{CPR-EG}}\\
		\cline{2-4}
		$\nu$ & $\kappa(\mathcal{B}_D\mathcal{A})$ 
		& $ \kappa(\mathcal{B}^D_D \mathcal{A}^D)$ 
		& $ \kappa(\mathcal{B}^E_D \mathcal{A}^E)$ \\ \hline
		1  & 130.450  & 267.947 &  164.076 \\ \hline
		$10^{-2}$ & 130.450  & 267.947 &  164.076\\ \hline
		$10^{-4}$ & 130.450  & 267.947 &  164.076\\ \hline
		$10^{-4}$ & 130.450  & 267.947 &  164.076 \\ \hline
	\end{tabular}
	\caption{Test~\ref{sect:NumTest-6}, 3D flow in L-shaped domain:  Condition number of the preconditioned stiffness matrices with the exact block preconditioners when $\nu$ varies on mesh with $h = 1/4$.}
	\label{test4_kappa_nu}
\end{table}

\section{Conclusions}\label{sec:conclusions}
In this paper, we proposed a pressure-robust EG scheme for solving the Stokes equations, describing the steady-state, incompressible viscous fluid flow. The new EG method is based on the recent work \cite{YiEtAl22-Stokes} on a stable EG scheme for the Stokes problem, where the velocity error depends on the pressure error and is inversely proportional to viscosity. In order to make the EG scheme in \cite{YiEtAl22-Stokes} a pressure-robust scheme, we employed a velocity reconstruction operator on the load vector on the right-hand side of the discrete system. Despite this simple modification, our error analysis shows that the velocity error of the new EG scheme is independent of viscosity and the pressure error, and the method maintains the optimal convergence rates for both the velocity and pressure. We also considered a perturbed version of our pressure-robust EG method. This perturbed method allows for the elimination of the DoFs corresponding to the DG component of the velocity vector via static condensation. The resulting condensed linear system can be viewed as an $H^1$-conforming $\mathbb{P}_1$-$\mathbb{P}_0$ scheme with stabilization terms. This stabilized $\mathbb{P}_1$-$\mathbb{P}_0$ scheme is inf-sup stable and pressure-robust as well. Furthermore, we proposed an efficient preconditioning technique whose performance is robust with respect to viscosity. Our two- and three-dimensional numerical experiments verified the theoretical results. In the future, this work will be extended to more complicated incompressible flow models, such as the Oseen and Navier-Stokes equations, where the pressure-robustness is important for simulations in various flow regimes. 

\section*{Acknowlegements}
The work of Son-Young Yi was supported by the U.S. National Science Foundation grant DMS-2208426.

	\bibliography{Stokes}
	
\end{document}